\newtheorem{thm}{Theorem}[section]
\newtheorem{sthm}{Theorem}[subsection]
\newtheorem{lem}[thm]{Lemma}
\newtheorem*{2.1C}{\ref{2.1}C~Theorem}
\newtheorem*{2.5A}{\ref{2.5}A~Corollary}
\newtheorem*{2.5B}{\ref{2.5}B~Corollary}
\newtheorem*{3.1D}{\ref{3.1}D~Lemma}
\newtheorem*{4.2.1A}{\ref{4.2.1}A~Lemma}
\newtheorem*{6.1B}{\ref{6.1}B~Zero-One Law}
\newtheorem*{6.1D}{\ref{6.1}D~Ergodicity of $G$-shift}
\newtheorem*{6.1F}{\ref{6.1}F~Hewitt-Savage Zero-One Law}
\newtheorem*{6.1H}{\ref{6.1}H~Strong Law of Large Numbers}
\newtheorem*{4.1.5'}{\ref{4.1.5}$^\prime$~Theorem}
\newtheorem*{4.1.8'}{\ref{4.1.8}$^\prime$~Theorem}
\newtheorem*{4.2.1B}{\ref{4.2.1}B~Theorem}
\newtheorem*{7.2}{\ref{7.2}~Lemma}
\newtheorem*{A.1'}{A.1$^\prime$~Lemma}
\newtheorem*{A.1''}{A.1$^{\prime\prime}$~Corollary}
\newtheorem*{A.1}{A.1~Lemma}
\newtheorem*{A.2}{A.2~Theorem}
\newtheorem*{A.3}{A.3~Theorem}
\newtheorem*{A.5}{A.5~Lemma}
\newtheorem*{A.6}{A.6~Lemma}
\newtheorem*{A.7}{A.7~Lemma}
\newtheorem*{A.8}{A.8~Lemma}
\newtheorem{slem}[sthm]{Lemma}
\newtheorem{cor}[thm]{Corollary}
\newtheorem{scor}[sthm]{Corollary}
\theoremstyle{definition}
\newtheorem{ex}[thm]{Example}
\newtheorem*{6.1A}{\ref{6.1}A}
\newtheorem*{6.1C}{\ref{6.1}C}
\newtheorem*{6.1E}{\ref{6.1}E}
\newtheorem*{6.1G}{\ref{6.1}G}
\newtheorem*{3.1E}{\ref{3.1}E~Example}
\newtheorem*{3.1F}{\ref{3.1}F~Example}
\newtheorem{se}[thm]{}
\newtheorem{sse}[sthm]{}
\newtheorem*{A.4}{A.4}
\newtheorem{rem}[thm]{Remark}
\newtheorem{srem}[sthm]{Remark}
\newtheorem{defn}[thm]{Definitions}
\newtheorem{MT}[thm]{Main theorems}
\newtheorem{OL}[thm]{Outlines}
\newtheorem{SN}[thm]{Standing symbols}
\newtheorem{que}[thm]{Question}
\newtheorem*{note*}{Note}
\journal{Bulletin des Sciences Math\'{e}matiques}
\begin{document}

\begin{frontmatter}
\title{On generalized Namioka spaces and joint continuity of functions on product of spaces}

\author{Xiongping Dai}
\ead{xpdai@nju.edu.cn}
%\cortext[cor1]{Corresponding author}
%\address{Department of Mathematics, Nanjing University, Nanjing 210093, People's Republic of China}
\author{Congying Lv}
%\ead{lvcongying@smail.nju.edu.cn}
\author{Yuxuan Xie}
%\ead{201501005@smail.nju.edu.cn}
\address{School of Mathematics, Nanjing University, Nanjing 210093, People's Republic of China}

%%%%%%%%%%%%%%%%%%%%%%%%%%%%%%
\begin{abstract}
A space $X$ is called a \textit{generalized Namioka space} (g$\mathcal{N}$-space), if for every compact space $Y$ and every separately continuous function $f\colon X\times Y\rightarrow\mathbb{R}$, there exists at least one point $x\in X$ such that $f$ is jointly continuous at each point of $\{x\}\times Y$. %For example, $\mathbb{Q}\cup[0,1]$ with the separable %metrizable topology inherited from the euclidean space $\mathbb{R}$ is a g$\mathcal{N}$-space but not an %$\mathcal{N}$-space.
We principally prove the following results:
\begin{enumerate}[1.]
\item If $X=\prod_{\alpha\in A}X_\alpha$ is non-meager such that each factor is a separable space or each factor is a pseudo-metric space, then $X$ is a g$\mathcal{N}$-space.

\item If $X$ is a separable space and $Y$ a pseudo-metric space such that $X\times Y$ is Baire (resp. non-meager), then $X\times Y$ is an $\mathcal{N}$-space (resp.~a g$\mathcal{N}$-space).

\item If $X=\prod_{\alpha\in A}X_\alpha$ such that each factor is separable and $\prod_{\alpha\in A^\prime}X_\alpha$ is a non-meager space for each countable subset $A^\prime$ of $A$, then $X$ is a non-meager g$\mathcal{N}$-space.

\item If $X=\prod_{\alpha\in A}X_\alpha$ such that each factor has a countable $\pi$-base, then each tail set having the property of Baire in $X$ is either meager or residual.
\end{enumerate}
If $G$ is a g$\mathcal{N}$ right-topological group and $X$ a locally compact regular space, or, if $G$ is a separable first countable non-meager right-topological group and $X\times X$ a countably compact completely regular space, then any separately continuous action $G\curvearrowright X$ is jointly continuous.
\end{abstract}

\begin{keyword}
Namioka space, Baire space, $W$-space, Tightness, $\pi$-base, Rich family, BM-game

\medskip
\MSC[2010] Primary 54E52; Secondary 37B02, 54C30
\end{keyword}
\end{frontmatter}
%%% ----------------------------------------------------------------------
%%% ----------------------------------------------------------------------
%%% ----------------------------------------------------------------------
%\tableofcontents
% Text of article.
\section{Introduction}\label{s1}
In (1899) \cite{B99}, Ren\'{e}-Louis Baire (1874-1932) proved that if $f\colon [0,1]\times[0,1]\rightarrow\mathbb{R}$ is a separately continuous function, then there exists a dense subset $J$ of $[0,1]$ such that $f$ is jointly continuous at each point of $J\times[0,1]$. After the seminal work of Isaac~Namioka (1974) \cite{N74} on Baire's problem of joint continuity, a space $X$ is called a \textit{Namioka space} (an $\mathcal{N}$-space), if for every compact space $Y$ and every separately continuous function $f\colon X\times Y\rightarrow\mathbb{R}$, there exists a dense set $R\subseteq X$ such that $f$ is jointly continuous at each point of $R\times Y$ (cf.~\cite{Tr79, T79, C81, SR83, L84, T85, D86, D87} and so on). In that case, $\langle X,Y\rangle$ is sometimes called a \textit{Namioka pair} (cf., e.g., \cite{K95, BP05}). Equivalently, $X$ is an $\mathcal{N}$-space if for every compact space $Y$ and every continuous function $f\colon X\rightarrow \mathcal{C}_p(Y)$, there exists a dense set $J\subseteq X$ such that $f\colon X\rightarrow C_u(Y)$ is continuous at each point of $J$. This implies that if $X$ has the local $\mathcal{N}$-property (i.e., each point of $X$ has a neighborhood which is an $\mathcal{N}$-space), then $X$ is an
$\mathcal{N}$-space itself. Here and in the sequel, $\mathcal{C}(Y)$ is the set of continuous real-valued functions on $Y$; $\mathcal{C}_p(Y)$ and $\mathcal{C}_u(Y)$ are the spaces of $\mathcal{C}(Y)$ equipped with the topologies of pointwise convergence and uniform convergence, respectively.

Following Burke-Pol (2005) \cite{BP05}, in the realm of completely regular $T_1$-spaces (i.e., Tychonoff spaces \cite[p.~117]{K55}), $\langle X,K\rangle$ is called a \textit{weak-Namioka pair}, if $K$ is compact and for any separately continuous function $f\colon X\times K\rightarrow\mathbb{R}$ and a closed subset $F$ of $X\times K$ projecting irreducibly onto $X$, the set of points of continuity of $f\!\upharpoonright_F\colon F\rightarrow\mathbb{R}$ is dense in $F$.
In Piotrowski-Waller (2012) \cite{PW12} the so-called weakly Namioka space was studied by only requiring $Y$ to be second countable Hausdorff instead of $Y$ being compact. That is, $X$ is called \textit{weakly Namioka} if for every second countable Hausdorff space $Y$ and every separately continuous function $f\colon X\times Y\rightarrow\mathbb{R}$, there exists a dense $G_\delta$-set $R\subseteq X$ such that $f$ is jointly continuous at each point of $R\times Y$.

In the present paper we shall give another generalization of the $\mathcal{N}$-property (Def.~\ref{1.1}A) and consider several classes of spaces with this generalized $\mathcal{N}$-property.

\begin{defn}\label{1.1}
Let $X$ be any space and $A\subseteq X$. Recall that $A$ is \textit{meager} or $X$ is of first category in $X$, if $A=\bigcup_{i=1}^\infty F_i$ where $\textrm{int}\,\overline{F}_i$, the interior of the closure of $F_i$, is void for all $i=1,2,\dotsc$; $A$ is \textit{non-meager} or $A$ is of second category in $X$, if it is not meager in $X$. The complement of a meager set is called \textit{residual} in $X$. $X$ is called a \textit{Baire space}, iff every non-void open subset of $X$ is non-meager in $X$, iff every residual subset of $X$ is dense in $X$. In addition, we say that $X$ is \textit{non-meager}, or $X$ is of second category, if it is a non-meager subset of itself. See \cite{K55, W70, O80, E89}. There is a well-known basic fact: If $\emptyset\not=A\subseteq U\varsubsetneq X$ where $U$ is open in $X$, then $A$ is non-meager in $U$ if and only if $A$ is non-meager in $X$.

\begin{enumerate}[\textbf{A.}]
\item[\textbf{A.}] $X$ is called a \textit{generalized Namioka space} (g$\mathcal{N}$-space), if for every compact space $Y$ and every separately continuous function $f\colon X\times Y\rightarrow\mathbb{R}$, there exists at least one point $x\in X$ such that $f$ is jointly continuous at each point of $\{x\}\times Y$.
In other words, a space $X$ is a g$\mathcal{N}$-space iff for every compact space $Y$ and every continuous function $f\colon X\rightarrow \mathcal{C}_p(Y)$, there exists at least one point $x\in X$ at which $f$ is $\|\cdot\|$-continuous. In particular, in the realm of completely regular spaces a homogeneous g$\mathcal{N}$-space is a Baire space (by Thm.~\ref{7.3} and Rem.~\ref{2.9}).

\item[\textbf{B.}] Let $G$ be a group with a topology. By $G\curvearrowright_\pi\!X$, it means a left-action of $G$ on $X$ with phase mapping $\pi\colon G\times X\rightarrow X$, $(t,x)\mapsto tx$ (i.e., $ex=x$ and $(st)x=s(tx)$ $\forall x\in X$, $s,t\in G$, where $e$ is the unit element of $G$). If $\pi$ is separately continuous, then $G\curvearrowright_\pi\!X$ is said to be separately continuous; if $\pi$ is jointly continuous, then $G\curvearrowright_\pi\!X$ is referred to as a \textit{topological flow}.
\end{enumerate}
\end{defn}

The $\mathcal{N}$-property and the g$\mathcal{N}$-property are both topologically invariant; but g$\mathcal{N}$-space is conceptually weaker than $\mathcal{N}$-space. For example, if a space contains an open set which is a g$\mathcal{N}$-space, then it is a g$\mathcal{N}$-space itself; but a space with an open $\mathcal{N}$-subspace need not be an $\mathcal{N}$-space itself.
In fact, if a completely regular g$\mathcal{N}$-space is not a Baire space, then it is not an $\mathcal{N}$-space (see Ex.~\ref{1.4}).
However, this concept is still useful for the mathematics modeling of topological dynamics as shown by the following observation:

\begin{thm}\label{1.2}
Let $G$ be a g$\mathcal{N}$ right-topological group and $X$ a locally compact regular space. If $G\!\curvearrowright_\pi\!X$ is separately continuous, then $G\!\curvearrowright_\pi\!X$ is a topological flow.
\end{thm}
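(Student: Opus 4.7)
The plan is to exploit the local compactness of $X$ by using the one-point compactification $X^+=X\sqcup\{\infty\}$ as the compact ``test space'' in the g.$\mathcal{N}$-property of $G$. The point of this trick is that applying g.$\mathcal{N}$ to $G\times K$ for a compact neighborhood $K\subseteq X$ only yields a continuity point $(t_*,\cdot)$ whose $G$-coordinate is out of our control, and propagating this to all of $G\times X$ would be awkward; but taking $Y=X^+$ in g.$\mathcal{N}$ yields a single $t_*$ that handles the whole of $X$ at once.

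First I would set up the compactification. Since $X$ is regular, locally compact, and Hausdorff, $X^+$ is compact Hausdorff with $X$ open and dense in it. Each partial map $\pi(t,\cdot)\colon X\to X$ is a homeomorphism, with continuous inverse $\pi(t^{-1},\cdot)$, by separate continuity of $\pi$ together with the group axioms; so $\pi(t,\cdot)$ is proper and extends uniquely to a homeomorphism $\tilde\pi(t,\cdot)\colon X^+\to X^+$ fixing $\infty$. A routine check shows that $\tilde\pi\colon G\times X^+\to X^+$ is separately continuous.

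Next, by complete regularity of $X$, it suffices to prove joint continuity of $f_\phi(t,x):=\phi(tx)$ on $G\times X$ for every $\phi\in C_c(X,[0,1])$, since such $\phi$ separate points and closed sets in $X$. For such a $\phi$, extend it to $\tilde\phi\colon X^+\to[0,1]$ by $\tilde\phi(\infty):=0$, with continuity at $\infty$ following from the compact support of $\phi$. Define $\tilde f_\phi(t,\tilde x):=\tilde\phi(\tilde\pi(t,\tilde x))$ on $G\times X^+$. This function is separately continuous: in the $\tilde x$-variable, $\tilde f_\phi(t,\cdot)=\tilde\phi\circ\tilde\pi(t,\cdot)$ is a composition of continuous maps; in the $t$-variable, $\tilde f_\phi(\cdot,x)=f_\phi(\cdot,x)$ is continuous on $G$ for $x\in X$ by separate continuity of $\pi$, and is constantly $0$ for $\tilde x=\infty$. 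The g.$\mathcal{N}$-property of $G$ applied to $\tilde f_\phi$ with $Y=X^+$ supplies some $t_*\in G$ such that $\tilde f_\phi$ is jointly continuous at every point of $\{t_*\}\times X^+$; since $X$ is open in $X^+$, this restricts to joint continuity of $f_\phi$ at every point of $\{t_*\}\times X$.

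Finally, I would spread this joint continuity over all of $G\times X$ via the cocycle identity $\pi(ts^{-1},sx)=\pi(t,x)$. For each $s\in G$ the map $\sigma_s(t,x):=(ts^{-1},sx)$ is a homeomorphism of $G\times X$, because right translation $t\mapsto ts^{-1}$ is continuous in the right-topological group $G$ and $x\mapsto sx$ is a homeomorphism of $X$ by separate continuity of $\pi$. Since $f_\phi\circ\sigma_s=f_\phi$, the joint-continuity set of $f_\phi$ is $\sigma_s$-invariant for every $s\in G$, hence a union of fibers $\pi^{-1}(y)$. The $\sigma$-orbit of $(t_*,x)$ coincides with the fiber $\pi^{-1}(t_*x)$, and these fibers exhaust $G\times X$ as $x$ ranges over $X$, so joint continuity of $f_\phi$ on $\{t_*\}\times X$ spreads to all of $G\times X$. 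Since this holds for every $\phi\in C_c(X,[0,1])$, the map $\pi$ itself is jointly continuous, and $G\curvearrowright_\pi\!X$ is a topological flow. The only non-routine verification in this argument is the separate continuity of $\tilde f_\phi$ at points of $G\times\{\infty\}$, which reduces to the compact support of $\phi$ combined with the properness of each $\pi(t,\cdot)$.
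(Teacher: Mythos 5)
Your proof is correct and follows essentially the same route as the paper's: pass to the one-point compactification, apply the g.$\mathcal{N}$-property to $\psi\circ\pi$ viewed as a real-valued function on $G\times X^+$ to obtain a single continuity fibre $\{g\}\times X^+$, and relocate that fibre via the right-translation identity $\pi(t,x)=\pi(tt^{-1}g,\,g^{-1}tx)$. The only difference is organizational: the paper argues by contradiction, choosing one test function tailored to a single non-convergent net $t_ix_i\not\to tx$, whereas you argue directly, establishing joint continuity of $\phi\circ\pi$ for every compactly supported $\phi$ and then deducing joint continuity of $\pi$ itself.
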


\begin{proof}
By considering the one-point compactification of $X$ in place of $X$ if necessary, we may assume $X$ is a compact regular space without loss of generality. Let $(t_i,x_i)\to (t,x)$ in $G\times X$. If $t_ix_i\not\to tx$ in $X$, then we may assume that $tx\notin\Lambda:=\overline{\{t_ix_i\,|\,i\ge i_0\}}$ for some $i_0$. Letting $\psi\in \mathcal{C}(X,[0,1])$ with $\psi\!\upharpoonright_\Lambda\equiv0$ and $\psi(tx)=1$, there exists an element $g\in G$ such that $f=\psi\circ\pi\colon G\times X\rightarrow[0,1]$ is jointly continuous at each point of $\{g\}\times X$. Then by $t_it^{-1}g\to g$ and $g^{-1}tx_i\to g^{-1}tx$, it follows that
$0=\psi(t_ix_i)=\psi\circ\pi(t_it^{-1}g,g^{-1}tx_i)\to\psi\circ\pi(g,g^{-1}tx)=\psi(tx)=1$,
which is impossible. %The proof is complete.
\end{proof}

Theorem~\ref{1.2} is already a generalization of the classic joint continuity theorem of R. Ellis (1957) \cite[Thm.~1]{E57}, since any locally compact Hausdorff semitopological group is an $\mathcal{N}$-space (cf.~\cite{N74} or Lem.~\ref{3.2}) and $G$ and $X$ in \cite[Thm.~1]{E57} are both presupposed to be locally compact Hausdorff spaces. 
See Theorem~\ref{4.1.11} in $\S$\ref{s4.1} for another variation of Ellis' joint continuity theorem by considering only countably compact phase spaces.

\begin{MT}\label{1.3}
We note that the class of $\tau$-well $\alpha$-favorable spaces of Christensen is closed under arbitrary products (cf.~Christensen 1981 \cite{C81}).
In this paper we shall mainly prove the following sufficient conditions for the $\mathcal{N}$-property or g$\mathcal{N}$-property of products:
\begin{enumerate}[\textbf{(1)}]
\item[\textbf{(1)}] {\it If $X$ is a non-meager (resp.~Baire) open subspace of the product of a family of separable spaces, then $X$ is a g$\mathcal{N}$-space (resp. an $\mathcal{N}$-space).}

\item[\textbf{(2)}] {\it Let $X$ be an open subspace of the product of a family of pseudo-metric spaces; then:
\begin{enumerate}[a.]
    \item $X$ is Baire iff $X$ is $\tau$-well $\beta$-d\'{e}favorable in the sense of Christensen (Thm.~\ref{3.21}).
    \item $X$ is non-meager iff $X$ is a g$\mathcal{N}$-space.    
\end{enumerate}}

\item[\textbf{(3)}] {\it If $X$ is a separable space and $Y$ a pseudo-metric space such that $X\times Y$ is Baire (resp. non-meager), then $X\times Y$ is an $\mathcal{N}$-space (resp.~a g$\mathcal{N}$-space) (cf.~Calbrix-Troallic 1979~\cite{CT79} or Saint-Raymond 1983 \cite[Thm.~6]{SR83} for $Y=\{y\}$ and \cite[Thm.~7]{SR83} for $X=\{x\}$).}

\item[\textbf{(4)}] {\it If $X$ is a space which has countable tightness and a rich family of Baire subspaces, then $X$ is an $\mathcal{N}$-space (cf.~Lin-Moors (2008) \cite{LM08} in the realm of Hausdorff spaces).}
\end{enumerate}

There exists a completely regular Baire space whose product with itself is meager (cf.~Oxtoby \cite[Thm.~5]{O60} or Cohen \cite{C76}). Thus, there exists a completely regular non-meager space whose product with itself is meager. However, we shall prove the following category theorems:
\begin{enumerate}[(1)]
\item[\textbf{(5)}] {\it Let $X=\prod_{\alpha\in A}X_\alpha$, where each factor is a separable space. If $\prod_{\alpha\in A^\prime}X_\alpha$ is a Baire (resp. non-meager) space for each countable set $A^\prime\subset A$, then $X$ is a Baire $\mathcal{N}$-space (resp. non-meager g$\mathcal{N}$-space).}

\item[\textbf{(6)}] {\it If $X=\prod_{i\in I}X_i$ such that each factor has a countable $\pi$-base, then every tail set having the property of Baire in $X$ is either meager or residual (cf.~Oxtoby 1960 \cite[Thm.~4]{O60} for each factor is a Baire space).}
\end{enumerate}

It is a well-known fact that in the realm of completely regular spaces, an $\mathcal{N}$-space must be a Baire space (cf.~Saint-Raymond 1983 \cite[Thm.~3]{SR83}). In fact, this can be generalized as follows:
\begin{enumerate}[(1)]
\item[\textbf{(7)}] {\it Any completely regular g$\mathcal{N}$-space is non-meager in itself; and if $X\times Y$ is completely regular such that each factor is either separable or pseudo-metrizable, then $X\times Y$ is a Baire (resp.~non-meager) space iff it is an $\mathcal{N}$-space (resp.~a g$\mathcal{N}$-space).}
\end{enumerate}

Thus, by Theorems~\ref{1.3}-(1)/(2) and (7), in the realm of product spaces of pseudo-metric spaces or of completely regular separable spaces, the class of the g$\mathcal{N}$-spaces coincides with the class of non-meager spaces. However, a non-meager space is not necessarily a g$\mathcal{N}$-space (Ex.~\ref{7.5}).
\end{MT}

Now we shall introduce a simple counterexample that says there exists a g$\mathcal{N}$-space which is not an $\mathcal{N}$-space.

\begin{ex}[{cf.~\cite[p.~181]{W70}}]\label{1.4}
Let $X=\mathbb{Q}\cup[0,1]$ with the Euclidean topology. Then $X$ is a non-homogeneous, separable, non-meager metric space. Thus, by Theorem~\ref{1.3}-(1) or (2b), $X$ is a g$\mathcal{N}$-space. However, $X$ is not an $\mathcal{N}$-space. For otherwise, $X$ should be a Baire space (by \cite[Thm.~3]{SR83} or Thm.~\ref{7.1}) but $X$ is not Baire, for the open set $\mathbb{Q}\setminus[0,1]$ is meager in $X$.
Nevertheless the countable product $X^\mathbb{N}$ is a meager separable metric space. Indeed, both of $F:=[0,1]^\mathbb{N}$ and $F_{m,q}:=\{(x_n)\in X^\mathbb{N}\,|\,x_m=q\}$, for all $(m,q)\in\mathbb{N}\times \mathbb{Q}\setminus[0,1]$, are closed subsets of $X^\mathbb{N}$ with void interiors. So $X^\mathbb{N}=F\cup\bigcup\{F_{m,q}\colon m\in\mathbb{N}\ \&\ q\in\mathbb{Q}\setminus[0,1]\}$ is meager. In fact, $X^\mathbb{N}$ is not a g$\mathcal{N}$-space by Theorem~\ref{7.3}.
\end{ex}

\begin{OL}
This self-contained paper is simply organized as follows: In $\S$\ref{s2} we shall prove Theorem~\ref{1.3}-(1) using the Banach-Mazur topological game (Thm.~\ref{2.5}). In $\S$\ref{s3} we shall prove Theorems~\ref{1.3}-(2a) and (3) and the necessity part of Theorem~\ref{1.3}-(2b) using the Christensen/Saint-Raymond topological game (Thm.~\ref{3.21}, Thm.~\ref{3.11} and Thm.~\ref{3.4}). The sufficiency part of Theorem~\ref{1.3}-(2b) will be proved in $\S$\ref{s7} (Thm.~\ref{7.8}). In $\S$\ref{s4} we shall prove Theorem~\ref{1.3}-(4) by improving the approaches in \cite{LM08} (Thm.~\ref{4.1.8}$^\prime$); and we will extend a theorem of Hurewicz \cite{H28} (Thm.~\ref{4.2.5} and Thm.~{5.2.9}). In addition, Theorem~\ref{1.3}-(5) will be proved in $\S$\ref{s5.4} based on Oxtoby's theorems (Thm.~\ref{5.4.4} and Thm.~\ref{5.4.5}). Theorem~\ref{1.3}-(6)---a category analogue of Kolmogoroff's zero-one law will be proved in $\S$\ref{s6} (Thm.~\ref{6.2.4} and Thm.~\ref{6.2.6}). Finally, Theorem~\ref{1.3}-(7) will be proved in $\S$\ref{s7} using \cite[Lem.~4]{SR83} (Lem.~\ref{7.2}, Thm.~\ref{7.3} and Thm.~\ref{7.9}). In \ref{A}, we will present the proofs of two topological Fubini theorems (Lem.~A.1 and Lem.~A.8). In particular, Lemma~\ref{6.2.5}, as a result of Lemma~A.8, is a variant of the classical Kuratowski-Ulam-Sikorski theorem (Thm.~A.3).
\end{OL}

\begin{SN}
Let $\mathbb{N}=\{1,2,3,\dotsc\}$ be the set of positive integers. If $X$, $Y$ and $Z$ are topological spaces, then:
\begin{enumerate}[1.]
\item[\textbf{1.}] $\mathfrak{N}_x(X)$ and $\mathfrak{N}_x^\textrm{o}(X)$ stand for the filters of neighborhoods and open neighborhoods of $x$ in $X$, respectively.
$\mathscr{O}(X)$\index{$\mathscr{O}(X)$} stands the family of all open non-void subsets of $X$.

\item[\textbf{2.}] For any function $f\colon X\times Y\rightarrow Z$ and all point $(x,y)\in X\times Y$, let
$f_x\colon Y\rightarrow Z,\ {y\mapsto f(x,y)}$ and $f^y\colon X\rightarrow Z,\ {x\mapsto f(x,y)}$.

\item[\textbf{3.}] Given any $K\subseteq X\times Y$, we write
$K_x=\{y\in Y\,|\,(x,y)\in K\}$ and $K^y=\{x\in X\,|\,(x,y)\in K\}$ for all $x\in X$ and $y\in Y$.
\end{enumerate}
\textbf{Warning.} No ``separability'' conditions are presupposed for any topological space in our later arguments. In particular following \cite{K55}, a ``compact space'' is a topological space (not necessarily Hausdorff) in which every open cover admits a finite subcover.
\end{SN}

%%%%%%%%%%%%%%%%%%%%%%%%%%%%%%%%%%%%%%%%%%%%%%%%%%%%%%%%%%%%%%%%%%%%%
\section{BM-game, $\Pi$-separable spaces and g$\mathcal{N}$-spaces}\label{s2}
This section will be devoted to proving Theorem~\ref{1.3}-(1) stated in $\S$\ref{s1} under the guise of Theorem~\ref{2.5} (Cor.~\ref{2.5}A). First of all, we recall the concept---BM-game needed in our later discussion.

\begin{se}[Banach-Mazur game~\cite{O57,C69,O80, K95} and $\Pi$-separable spaces]\label{2.1}
Let $X$ be any topological space. We will need the following basic notions:

\item \textbf{A.} By a \textit{BM($X$)-play}, we mean a sequence $\{(B_i,A_i)\}_{i=1}^\infty$ of pairs of elements of $\mathscr{O}(X)$ such that $B_i\supseteq A_i\supseteq B_{i+1}$ for all $i\in\mathbb{N}$, where $B_i$ and $A_i$ are picked alternately by Player $\beta$ and Player $\alpha$, respectively; and moreover, Player $\beta$ is always granted the privilege of the first move. Our winning condition (W.C.) is defined as follows:

\begin{enumerate}[\textbf{W.C.}:]
    \item Player $\alpha$ \textit{wins} the BM($X$)-play $\{(B_i,A_i)\}_{i=1}^\infty$, if $\bigcap_{i\in\mathbb{N}}A_i\not=\emptyset$; otherwise, Player $\beta$ \textit{wins} this play. 
\end{enumerate}
Note here that BM($X$)-game is sometimes called Choquet game and denoted by $\mathcal{J}(X)$; see, e.g., \cite{C81, SR83, D86, K95}.
\begin{enumerate}[(i)]
    \item If Player $\alpha$ has a winning strategy in the BM($X$)-game, then $X$ is called \textit{$\alpha$-favorable of BM} or a \textit{Choquet space} \cite{K95}. 
    
    \item If Player $\beta$ has no winning strategy in the BM($X$)-game, then $X$ is said to be \textit{$\beta$-d\'{e}favorable of BM}. In that case, if $\tau$ is a strategy for Player $\beta$, then there always exists a $\tau$-play $\{(B_i,A_i)\}_{i=1}^\infty$ of BM($X$) such that $\bigcap_{i=1}^\infty A_i\not=\emptyset$.
%\item We say that Player $\beta$ has a \textit{winning %strategy %$\tau$ with $\tau(X)=U\in\mathscr{O}(X)$} in the BM($X$)-%game, in %case if Player $\beta$ begins with $U_1=U$ then following %$\tau$ %against any sequence played by Player $\alpha$, Player %$\beta$ %wins the play; 
%that is to say, any $\tau$-play $\{(U_i,V_i)\}_{i=1}^\infty$ of %BM($X$) with $U_1=U$ must be such that $\bigcap_{i=1}^\infty %U_i=\emptyset$. 
\end{enumerate}

\item\textbf{B.} $X$ is called a \textit{$\Pi$-separable space}, if there exists a family $\{X_i\colon i\in I\}$ of separable spaces such that $X$ is homeomorphic to $\prod_{i\in I}X_i$. In that case, we shall identify $X$ with $\prod_{i\in I}X_i$ if no confusion. Clearly, a separable space is $\Pi$-separable; but not vice versa.
\end{se}

The class of Choquet spaces is closed under arbitrary products \cite[Thm.~7.12]{C69}. Although this is not the case for the class of $\beta$-d\'{e}favorable spaces of BM, we can have the following:

\begin{2.1C}[{cf.~\cite[Thm.~9]{SR83}}]
If $X$ is a Choquet space and $Y$ is a $\beta$-d\'{e}favorable space of BM, then $X\times Y$ is $\beta$-d\'{e}favorable of BM.
\end{2.1C}

\begin{proof}[An alternative proof]
First of all, for all $U\in\mathscr{O}(X\times Y)$, we can define a non-void collection of non-void open subsets of $Y$: 
\begin{enumerate}
    \item[] $\mathcal{Y}[U]=\{W\in\mathscr{O}(Y)\colon \exists W^\prime\in\mathscr{O}(X) \textrm{ s.t. }W^\prime\times W\subseteq U\}$. 
\end{enumerate}
Further, for all $W\in\mathcal{Y}[U]$, we can well define a set
\begin{enumerate}
    \item[] $W^*=\bigcup\{W^\prime\in\mathscr{O}(X)\,|\,W^\prime\times W\subseteq U\}\in\mathscr{O}(X)$.
\end{enumerate}
Then $W^*\times W\subseteq U$ for all $U\in\mathscr{O}(X\times Y)$ and all $W\in\mathcal{Y}[U]$.

Let $t$ be any strategy for Player $\beta$ in the BM($X\times Y$)-game.
Since $X$ is Choquet space, there is a winning strategy $s$ for Player $\alpha$ in the BM($X$)-game. We need only prove that $t$ is not a winning strategy for Player $\beta$ in the BM($X\times Y$)-game. For that, we will introduce an auxiliary strategy $\theta$ for Player $\beta$ in the BM($Y$)-game, based on $t$ and $s$, as follows:
Let $U_1=t(X\times Y)$ and set $\theta(Y)=W_1\in\mathcal{Y}[U_1]$; then, for all $V_{2,1}\in\mathscr{O}(W_1)$, as the possible choice of Player $\alpha$ at the 1st stroke in the BM($Y$)-game, write $V_1=s(W_1^*)\times V_{2,1}\in\mathscr{O}(U_1)$ as the possible answer of Player $\alpha$ to Player $\beta$'s 1st move $U_1$ in the BM($X\times Y$)-game. Let $U_2=t(X\times Y, V_1)$ and set $\theta(Y, V_{2,1})=W_2\in\mathcal{Y}[U_2]$; then, for all $V_{2,2}\in\mathscr{O}(W_2)$, as the possible choice of Player $\alpha$ at the 2nd stroke in the BM($Y$)-game, write $V_2=s(W_1^*,W_2^*)\times V_{2,2}\in\mathscr{O}(U_2)$. Let $U_3=t(X\times Y, V_1, V_2)$ and set $\theta(Y, V_{2,1}, V_{2,2})=W_3\in\mathcal{Y}[U_3]$; then, for all $V_{2,3}\in\mathscr{O}(W_3)$, as the possible choice of Player $\alpha$ at the 3rd stroke in the BM($Y$)-game, write $V_3=s(W_1^*,W_2^*,W_3^*)\times V_{2,3}\in\mathscr{O}(U_3)$. Continue this procedure indefinitely, we can introduce a strategy $\theta$ for Player $\beta$ in the BM($Y$)-game based on strategies $t$ and $s$.
Since $Y$ is $\beta$-d\'{e}favorable of BM, $\theta$ is not a winning strategy for Player $\beta$ in the BM($Y$)-game. Thus, there exists a sequence $\{V_{2,n}\}_{n=1}^\infty$ played by Player $\alpha$ such that Player $\alpha$ wins the $\theta$-play $\{(W_n,V_{2,n})\}_{n=1}^\infty$. This implies that Player $\alpha$ also wins the $t$-play $\{(U_n,V_n)\}_{n=1}^\infty$ in the BM($X\times Y$)-game, where $V_n=s(W_1^*,\dotsc,W_n^*)\times V_{2,n}$. Therefore, $X\times Y$ is $\beta$-d\'{e}favorable of BM.
\end{proof}

One of the points of the BM-game is the so-called Oxtoby-Christensen-Saint-Raymond Category Theorem, which characterizes Baire space using the BM-game played on it as follows:

\begin{thm}[cf.~\cite{O57, C81, SR83}]\label{2.2}
A space $X$ is Baire if and only if there exists no winning strategy for Player $\beta$ in the BM($X$)-game (i.e., $X$ is Baire if and only if $X$ is $\beta$-d\'{e}favorable of BM).
\end{thm}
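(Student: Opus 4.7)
The plan is to prove both directions by contrapositive. For the easier implication---if $\beta$ has a winning strategy then $X$ is not Baire---I would start from the converse hypothesis that some non-void open $U\subseteq X$ is meager, say $U=\bigcup_{n\ge 1}F_n$ with each $\overline{F_n}$ having empty interior, and then define a strategy $\tau$ for $\beta$ by $\tau(\emptyset):=U$ and, after $\alpha$'s move $V_n$, by letting $\tau(\emptyset,V_1,\ldots,V_n)$ be any non-empty open subset of $V_n\cap(X\setminus\overline{F_n})$. Such a set exists because $X\setminus\overline{F_n}$ is open and dense. Any $\tau$-play $\{(U_i,V_i)\}_{i\ge 1}$ would then satisfy $\bigcap_{i\ge 1}U_i\subseteq U\setminus\bigcup_n\overline{F_n}=\emptyset$, so $\tau$ is winning for $\beta$.

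For the harder direction I would assume $\tau$ is a winning strategy for $\beta$ with $\tau(\emptyset)=U\in\mathscr{O}(X)$ and show that $U$ is meager in itself, contradicting Baireness via the basic fact recalled in Definition~\ref{1.1} that every non-meager subset of an open set $U\subseteq X$ is non-meager in $X$ (so open subspaces of Baire spaces are Baire). The main tool is a recursive construction via Zorn's lemma: at stage $n$ I would build a pairwise disjoint family $\mathcal{V}_n\subseteq\mathscr{O}(U)$ indexed by $\tau$-admissible sequences $(V_1,\ldots,V_n)$ of $\alpha$-moves. At stage $1$, apply Zorn's lemma to extract a maximal pairwise disjoint family $\mathcal{V}_1\subseteq\mathscr{O}(U)$; maximality forces $W_1:=\bigcup\mathcal{V}_1$ to be open and dense in $U$. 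Recursively, for each admissible $(V_1,\ldots,V_n)$ with $V_k\in\mathcal{V}_k$ and $V_{k+1}\subseteq\tau(\emptyset,V_1,\ldots,V_k)$, take a maximal pairwise disjoint family of non-empty open subsets of $\tau(\emptyset,V_1,\ldots,V_n)$, and let $\mathcal{V}_{n+1}$ be the union of these families over all admissible $(V_1,\ldots,V_n)$. Then $W_{n+1}:=\bigcup\mathcal{V}_{n+1}$ is open and dense in $W_n$, hence dense in $U$, so each $U\setminus W_n$ is closed and nowhere dense in $U$.

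To finish I would argue $\bigcap_{n\ge 1}W_n=\emptyset$, which then presents $U=\bigcup_n(U\setminus W_n)$ as a meager open subset of $X$, the desired contradiction. Indeed, any point $x\in\bigcap_n W_n$ would, by pairwise disjointness within each $\mathcal{V}_n$, lie in a unique $V_n\in\mathcal{V}_n$ for every $n$; by construction $V_{n+1}\subseteq\tau(\emptyset,V_1,\ldots,V_n)=:U_{n+1}$, so $\{(U_i,V_i)\}_{i\ge 1}$ would be a $\tau$-play with $x\in\bigcap_i U_i\ne\emptyset$, contradicting that $\tau$ is winning. I expect the main obstacle to be the bookkeeping in the recursive construction, especially verifying that density of $W_{n+1}$ in $W_n$ propagates to density of $W_n$ in $U$, and that the nested sequence $(V_n)$ extracted from $x$ is genuinely a $\tau$-play in the sense of Definition~\ref{2.1}a. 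Both hinge on the stability of Baireness and density under restriction to non-empty open subsets, exactly the observation flagged in Definition~\ref{1.1}.
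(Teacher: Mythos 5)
Your first construction (a meager open $U$ yields a winning strategy $\tau$ with $\tau(\emptyset)=U$) is correct and essentially the paper's own argument (the sufficiency half of its Theorem~\ref{2.3}). The gap is in the harder direction, and it is exactly where you suspected trouble: with your bookkeeping, $W_{n+1}$ is \emph{not} dense in $W_n$. At stage $n+1$ you take, for each admissible chain $(V_1,\dotsc,V_n)$, a maximal pairwise disjoint family of non-void open subsets of the single set $U_{n+1}=\tau(\emptyset,V_1,\dotsc,V_n)$. Maximality then only forces the union of that family to be dense in $U_{n+1}$, and since each $V_n\in\mathcal{V}_n$ meets exactly one admissible chain, $W_{n+1}\cap V_n$ is dense merely in $\tau(\emptyset,V_1,\dotsc,V_n)$, which is just \emph{some} non-void open subset of $V_n$ handed to you by $\tau$ and need not be dense in $V_n$. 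Thus $V_n\setminus\overline{\tau(\emptyset,V_1,\dotsc,V_n)}$ can be a non-void open set that no later $W_m$ ever meets, so $U\setminus W_{n+1}$ can have non-empty interior and the decomposition $U=\bigcup_n(U\setminus W_n)$ does not exhibit $U$ as meager. (Your concluding step, that $\bigcap_n W_n=\emptyset$ because a common point would produce a winning $\tau$-play for $\alpha$, is fine; it is the density claim that fails.)

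The repair is to apply maximality to the family of $\beta$-responses rather than to a family of $\alpha$-moves inside one fixed $\beta$-response, which is what the paper does in its proof of Theorem~\ref{2.3}. Concretely, at stage $n+1$ consider \emph{all} pairs $(V_n,U_{n+1})$ where $V_n$ is an arbitrary non-void open subset of some stage-$n$ set $U_{n,i}$ (with its recorded history) and $U_{n+1}=\tau(\emptyset,V_1,\dotsc,V_n)$, and extract a family that is maximal subject to the sets $U_{n+1}$ being pairwise disjoint. If the union $\Omega_{n+1}$ of these $U_{n+1}$ failed to be dense in $\Omega_n$, a non-void open hole $W\subseteq U_{n,i}$ would itself be a legal move for $\alpha$, and $\tau$'s response $\tau(\emptyset,V_1,\dotsc,V_{n-1},W)\subseteq W$ could be adjoined, contradicting maximality; this is the step your version cannot perform because your $\alpha$-moves are frozen in advance. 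With that change each $\Omega_{n}$ is dense open in $U$, $\bigcap_n\Omega_n=\emptyset$ by your final argument, and $U$ is meager, as desired.
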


Recall that a space $X$ is non-meager if and only if every residual set in $X$ is non-void. It should be mentioned that $U\in\mathscr{O}(X)$ is non-meager in $X$ if and only if $U$, as a subspace of $X$, is non-meager. Thus,
if $X$ contains a non-meager $U\in\mathscr{O}(X)$, then $X$ is non-meager itself.
Indeed, if $\{G_n\}_{n=1}^\infty$ is any sequence of dense open subsets of $X$, then $\{G_n\cap U\}_{n=1}^\infty$ is a sequence of dense open subsets of $U$ so that
$\emptyset\not=\bigcap_{n=1}^\infty(G_n\cap U)\subseteq\bigcap_{n=1}^\infty G_n$. In addition, if a closed set $A\subseteq X$ is non-meager, then $A$ is non-meager itself ($\because A=(A\setminus\textrm{int}_XA)\cup\textrm{int}_XA$ and $\textrm{int}_XA\not=\emptyset$ is non-meager in $X$); of course, not vice versa.
Then the above classic category theorem (Thm.~\ref{2.2}) may be slightly improved to the following local version, which in turn implies Theorem~\ref{2.2}.

\begin{thm}\label{2.3}
Let $X$ be a topological space. Then $U\in\mathscr{O}(X)$ is non-meager in $X$ if and only if there exists no winning strategy $\tau$ with $\tau(X)=U$ for Player $\beta$ in the BM($X$)-game.
\end{thm}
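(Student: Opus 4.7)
I will prove the contrapositive form: the open set $U$ is of first category in $X$ (equivalently, in itself, by the basic fact recalled in Def.~\ref{1.1}) if and only if Player~$\beta$ possesses a winning strategy $\tau$ in the BM($X$)-game with $\tau(\emptyset)=U$. Since every move after the first in such a strategy must lie in $U$ (because $U_i\supseteq V_i\supseteq U_{i+1}$ and $V_1\subseteq U_1=U$), the problem effectively reduces to constructing/destroying a winning strategy for $\beta$ on the subspace $U$ whose very first move is required to be $U$ itself. Theorem~\ref{2.2} applied to $U$ is not sufficient here, since ``Baire'' is strictly stronger than ``of second category''; thus a finer argument than a direct invocation of Thm.~\ref{2.2} is needed.

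For the easy direction, assume $U$ is meager in itself. Then $U=\bigcup_{n\in\mathbb{N}}F_n$ with $F_n$ nowhere dense; replacing $F_n$ by $\overline{F_1\cup\dotsm\cup F_n}^{\,U}$ we may take the $F_n$ closed in $U$ and increasing, so that the sets $G_n:=U\setminus F_n$ form a decreasing sequence of dense open subsets of $U$ with $\bigcap_n G_n=\emptyset$. Define $\tau$ by $\tau(\emptyset):=U$ and, inductively, $\tau(\emptyset,V_1,\dotsc,V_n):=V_n\cap G_n$; since $G_n$ is dense in $U$ and $V_n\in\mathscr{O}(U)$, this intersection lies in $\mathscr{O}(V_n)$. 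Any $\tau$-play then satisfies $\bigcap_n U_n\subseteq\bigcap_n G_n=\emptyset$, so $\tau$ is a winning strategy for $\beta$ with $\tau(\emptyset)=U$.

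For the hard direction, suppose $\beta$ has such a winning strategy $\tau$. I will construct, by transfinite induction on $n$, a tree of legal $\alpha$-moves whose level $n$ consists of a pairwise disjoint family $\mathcal{V}_n$ of non-void open sets, with $W_n:=\bigcup\mathcal{V}_n$ dense in $U$. Level~$1$: using Zorn's lemma, pick a maximal pairwise disjoint family $\mathcal{V}_1\subseteq\mathscr{O}(U)=\mathscr{O}(\tau(\emptyset))$; maximality forces $W_1=\bigcup\mathcal{V}_1$ to be dense in $U$. Inductive step: for each branch $(V_1,\dotsc,V_n)$ with $V_k\in\mathcal{V}_k$ following its unique predecessor, pick (again by Zorn) a maximal pairwise disjoint family $\mathcal{V}_{n+1}(V_1,\dotsc,V_n)$ of open subsets of $\tau(\emptyset,V_1,\dotsc,V_n)$, and set $\mathcal{V}_{n+1}$ to be their union over branches; maximality yields that $\bigcup\mathcal{V}_{n+1}(V_1,\dotsc,V_n)$ is dense in $V_n$, and the disjointness of $\mathcal{V}_n$ then gives that $W_{n+1}$ is dense in $W_n$, hence in $U$. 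Each $W_n$ is open and dense in $U$. Finally, for any $x\in\bigcap_n W_n$, the disjointness at every level selects a unique branch $(V_1,V_2,\dotsc)$ with $x\in V_n$ for all $n$; this is a legal $\tau$-play of BM($X$), so $\bigcap_n V_n\subseteq\bigcap_n U_n=\emptyset$, contradicting $x\in\bigcap_n V_n$. Therefore $\bigcap_n W_n=\emptyset$, the closed sets $U\setminus W_n$ are nowhere dense in $U$, and $U=\bigcup_n(U\setminus W_n)$ is meager in itself and hence in $X$.

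The main obstacle is the tree construction in the hard direction: one must ensure that density is inherited from level to level and that a point in the countable intersection picks out a unique consistent $\tau$-play so that the winning condition on $\tau$ produces the desired contradiction. The disjointness at each level, obtained via Zorn, is precisely what guarantees both the density propagation $W_{n+1}\cap V_n$ dense in $V_n$ (for $V_n\in\mathcal{V}_n$) and the uniqueness of the branch associated to any $x\in\bigcap_n W_n$.
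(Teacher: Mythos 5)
Your ``easy'' direction (if $U$ is meager then Player $\beta$ has a winning strategy with $\tau(\emptyset)=U$) is correct and coincides with the paper's sufficiency argument. The problem is in the hard direction, at the inductive step of your tree construction. You take $\mathcal{V}_{n+1}(V_1,\dotsc,V_n)$ to be a maximal pairwise disjoint family of open subsets of $\tau(\emptyset,V_1,\dotsc,V_n)$ and assert that maximality forces its union to be dense in $V_n$. It does not: maximality only forces the union to be dense in $\tau(\emptyset,V_1,\dotsc,V_n)$, and that set is merely \emph{some} non-void open subset of $V_n$ chosen by Player $\beta$'s strategy --- it need not be dense in $V_n$. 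For example, if $V_n=(0,1)$ in the real line and the strategy answers with $(0,1/2)$, then $W_{n+1}\cap V_n\subseteq(0,1/2)$ misses the non-void open set $(1/2,1)$ entirely. Consequently the sets $W_n$ need not be dense in $U$ for $n\ge 2$, the complements $U\setminus W_n$ need not be nowhere dense, and the concluding line ``$U=\bigcup_n(U\setminus W_n)$ is meager'' is unjustified.

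The repair is to impose the disjointness and the maximality on Player $\beta$'s \emph{responses} rather than on Player $\alpha$'s moves, which is exactly what the paper's proof does. At level $n+1$ one chooses, by Zorn, a maximal family of pairs $\bigl(V,\tau(\emptyset,V_1,\dotsc,V_n,V)\bigr)$ in which $V$ ranges over all legal $\alpha$-moves contained in the level-$n$ responses and in which the responses $\tau(\emptyset,V_1,\dotsc,V_n,V)$ are required to be pairwise disjoint. If some non-void open subset $G$ of the union $\Omega_n$ of the level-$n$ responses missed every level-$(n+1)$ response, then $V:=G\cap U_{n,i}$ would be a legal $\alpha$-move for some $i$, and its response $\tau(\dotsc,V)\subseteq G$ could be adjoined to the family without destroying disjointness, contradicting maximality; hence $\Omega_{n+1}$ is dense in $\Omega_n$ and, by induction, in $U$. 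With this modification the remainder of your argument (a point of $\bigcap_n\Omega_n$ selects, by disjointness, a unique branch which is a legal $\tau$-play with non-empty intersection, contradicting that $\tau$ is winning; hence $\bigcap_n\Omega_n=\emptyset$ and $U$ is meager) goes through exactly as you wrote it.
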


\begin{proof}
\textsl{Necessity}: Suppose to the contrary that there is a winning strategy $\tau$ with $\tau(X)=U$ for Player $\beta$ in the BM($X$)-game. To get a contradiction, let $I_1=\{\emptyset\}$, $U_{1,\emptyset}=U$ played firstly by Player $\beta$, and $V_{0,\emptyset}=X$. Now using transfinite induction, we can construct a maximal family $\{(V_{n-1,i},U_{n,i})\}_{i\in I_n}$ of open subsets of $X$, for each integer $n\ge2$, such that:
    \begin{enumerate}
    \item $U_{n,i}\cap U_{n,j}=\emptyset\ \forall i\not=j\in I_n$;
    \item $\forall i\in I_n$ $\exists j=j(i)\in I_{n-1}$ such that $V_{n-1,i}\subseteq U_{n-1,j}$;
    \item If $(i_2,\dotsc,i_n)\in I_2\times\dotsm\times I_n$ and $U_{1,\emptyset}\supseteq U_{2,i_2}\supseteq\dotsm\supseteq U_{n,i_n}$, then $U_{n,i_n}=\tau(X,V_{1,i_2},\dotsc,V_{n-1,i_n})$.
    \end{enumerate}
Let $\Omega_n=\bigcup_{i\in I_n}U_{n,i}$ for all $n\in\mathbb{N}$. Then each $\Omega_n$, $n\ge2$, is open dense in $\Omega_1$ by the maximality. Note that for all $i_n\in I_n$ and all $i_{n+1}\in I_{n+1}$, either $U_{n,i_n}\supseteq U_{n+1,i_{n+1}}$ or $U_{n,i_n}\cap U_{n+1,i_{n+1}}=\emptyset$, for 1. and $\bigcup\{U_{n+1,i_{n+1}}\,|\,i_{n+1}\in I_{n+1}\ \&\ U_{n+1,i_{n+1}}\subseteq U_{n,i_n}\}$ is dense in $U_{n,i_n}$. However, $\tau$ is a winning strategy for Player $\beta$, so $\bigcap_{n=1}^\infty U_{n,i_n}=\emptyset$ and $\bigcap_{n\ge2}\Omega_n=\emptyset$, and $\Omega_1$ is not non-meager, which is a contradiction.

\textsl{Sufficiency}: To prove $U$ is non-meager, suppose to the contrary that $U$ is meager in $X$. Then there exists a sequence $\{G_n\}_{n=1}^\infty$ of open dense subsets of $U$ such that $\bigcap_{n=1}^\infty G_n=\emptyset$. We may assume $G_1=U$ without loss of generality. Now we could define inductively a winning strategy $\tau$ with $\tau(X)=U$ for Player $\beta$ in the BM($X$)-game as follows:
Let $\tau(X):=G_1$; then for every $V_1\in\mathscr{O}(G_1)$ as the possible first move of Player $\alpha$, let $\tau(X,V_1)=U_2:=V_1\cap G_2$. If Player $\beta$ has played $(U_1,\dotsc,U_n)$ and Player $\alpha$ has played $(V_1,\dotsc,V_n)$, then at the $(n+1)$th-stroke, Player $\beta$ plays
$\tau(X, V_1,\dotsc,V_n)=U_{n+1}:=V_n\cap G_{n+1}$
and Player $\alpha$ plays an arbitrary set $V_{n+1}\in\mathscr{O}(U_{n+1})$. Thus, by induction, we can define a BM-play $\{(U_i,V_i)\}_{i=1}^\infty$ with $U_{i+1}=\tau(X, V_1,\dotsc,V_i)$ such that $\bigcap_{i=1}^\infty U_i\subseteq\bigcap_{i=1}^\infty G_i=\emptyset$. This shows that $\tau$ with $\tau(X)=U$ is a winning strategy for Player $\beta$ in the BM($X$)-game, contrary to the sufficiency condition. The proof is complete.
\end{proof}

\begin{se}[Countable compactness]\label{2.4}
Let $X$ be a topological space, $A\subseteq X$ and $x\in X$. Recall that the point $x$ is an \textit{accumulation}/\textit{cluster}/\textit{limit} point of $A$ iff $U\cap(A\setminus\{x\})\not=\emptyset$ $\forall U\in\mathfrak{N}_x(X)$. The point $x$ is an \textit{$\omega$-accumulation} point of $A$ of iff $U$ contains infinitely many points of $A$ for all $U\in\mathfrak{N}_x(X)$.
The point $x$ is a \textit{cluster point of a net} $\{x_n\colon n\in D\}$ in $X$ iff $\{x_n\colon n\in D\}$ is frequently in every $U\in\mathfrak{N}_x(X)$; i.e., $\forall m\in D$, $\exists n\ge m$ s.t. $x_n\in U$. If $x$ is a cluster point of a sequence $\{x_n\}_{n=1}^\infty$ in $X$, then there is a subnet $\{x_{n(\alpha)}\colon \alpha\in A\}$ of $\{x_n\}_{n=1}^\infty$ with $x_{n(\alpha)}\to x$.

\item \textbf{A.} $X$ is referred to as \textit{countably compact} \cite[p.~162]{K55}, iff every countable open cover of $X$ admits a finite subcover, iff each sequence has a cluster point in $X$, iff $X$ possesses the countable FIP (finite intersection property), and iff each infinite subset of $X$ has an $\omega$-accumulation point in $X$.

\item\textbf{B.} A countably compact space is pseudo-compact; i.e., every continuous real-valued function on it is bounded. However, the countable compactness is essentially weaker than the compactness. For example, \cite[Problem~5E-(e)]{K55} and the product of two countably compact spaces need not be countably compact \cite{E89}. However, if $X$ is compact and $Y$ countably compact, then $X\times Y$ is a countably compact space. See Theorem~\ref{5.2.7}-(2) for another condition for this.

\item\textbf{C.} In addition, we notice that there exists an $\mathcal{N}$-space $B$ and a countably compact completely regular space $C$ and a separately continuous function $f\colon B\times C\rightarrow\mathbb{R}$ such that the set of points of continuity is not dense in $B\times C$ (see \cite[Ex.~1.4]{BP05}).
\end{se}

In Calbrix-Troallic (1979) \cite{CT79} (or \cite[Thm.~6]{SR83}) it is proved that every separable Baire space has the $\mathcal{N}$-property. It turns out that this theorem can be extended to a $\Pi$-separable non-meager space via the following so-called joint continuity theorem.

\begin{thm}\label{2.5}
Let $X$ be a $\Pi$-separable space, $Y$ a space such that $Y\times Y$ is countably compact, $Z$ a pseudo-metric space, and $X_o\in\mathscr{O}(X)$. If $f\colon X_o\times Y\rightarrow Z$ is a separately continuous mapping, then there exists a residual set $R$ in $X_o$ such that $f$ is jointly continuous at each point of $R\times Y$.
\end{thm}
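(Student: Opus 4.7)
The plan is to adapt the standard oscillation approach to the $\Pi$-separable setting by coupling the Banach-Mazur game characterization of Theorem~\ref{2.3} with a coordinate-reduction argument. Let $d$ denote the pseudo-metric on $Z$. For each $\epsilon>0$ define the open set
\[
O_\epsilon=\bigl\{x\in X_o\,\bigl|\,\exists\,U\in\mathfrak{N}_x^{\mathrm{o}}(X_o)\ \text{s.t.}\ \forall y\in Y,\ \exists\,W\in\mathfrak{N}_y^{\mathrm{o}}(Y),\ d\text{-}\mathrm{diam}\,f(U\times W)<\epsilon\bigr\},
\]
and set $R:=\bigcap_{n\in\mathbb{N}}O_{1/n}$. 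Unwinding the definition shows that for each $x\in R$ the map $f$ is jointly continuous at every point of $\{x\}\times Y$, so it suffices to verify that $R$ is residual in $X_o$. Since each $E_\epsilon:=X_o\setminus O_\epsilon$ is closed, it is meager in $X_o$ iff it has empty interior in $X_o$; thus the problem reduces to showing that no non-empty $V\in\mathscr{O}(X_o)$ is contained in $E_\epsilon$.

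Suppose for contradiction that $V\in\mathscr{O}(X_o)$ satisfies $V\subseteq E_\epsilon$. Write $X=\prod_{i\in I}X_i$ with each $X_i$ separable of countable dense set $D_i$; shrinking $V$, I may take it to be a basic open box depending on a finite set $F_0\subseteq I$ of coordinates. I then construct inductively a nested sequence of basic open boxes $V\supseteq U_1\supseteq U_2\supseteq\cdots$, with $U_n$ depending only on a finite set $F_n\supseteq F_{n-1}$, together with shrinking open sets $T_n\subseteq Y$ and pairs $(x_n,y_n),(x'_n,y'_n)\in U_n\times T_n$ for which $d(f(x_n,y_n),f(x'_n,y'_n))\ge\epsilon$. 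The bad pairs at each stage exist because $U_n\subseteq V\subseteq E_\epsilon$; separate continuity in each variable then lets me shrink $U_{n+1}$ and $T_{n+1}$ around these four points while retaining the $\epsilon$-gap on an entire neighborhood. Simultaneously, on each coordinate $i\in F_n$ I force the projection of $U_n$ to lie in an ever-shrinking open neighborhood of a chosen point of $D_i$, so that, since only countably many coordinates $I^{\infty}:=\bigcup_n F_n$ are ever touched, a well-defined point $x^*\in\bigcap_n U_n$ can be read off (choosing any point of $X_i$ arbitrarily on $i\notin I^{\infty}$).

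Invoking the hypothesis that $Y\times Y$ is countably compact, the sequence $\{(y_n,y'_n)\}\subseteq Y\times Y$ has a cluster point $(y^*,y^{**})$; because both $y_n$ and $y'_n$ are trapped in the common shrinking $T_n$, I arrange this cluster point to satisfy $y^*=y^{**}$. Separate continuity of $f$ in its first variable at $x^*$ together with continuity of $f_{x^*}$ in $y$ then lets me pass to the limit in $d(f(x_n,y_n),f(x'_n,y'_n))\ge\epsilon$ and obtain $0=d(f(x^*,y^*),f(x^*,y^{**}))\ge\epsilon$, the desired contradiction. The principal difficulty in this plan is the coupled induction in the second paragraph: one must maintain finite coordinate support of the $U_n$, transfer the $\epsilon$-oscillation through separate continuity to an entire neighborhood at each step, and pin the two bad $y$-sequences inside a common shrinking $T_n$ so that $Y\times Y$ countable compactness genuinely collapses the cluster pair to $y^*=y^{**}$---this last demand is precisely why the hypothesis is stated on $Y\times Y$ rather than merely on $Y$.
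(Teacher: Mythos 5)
Your overall architecture (closed oscillation sets, finite-support open boxes, countable compactness of $Y\times Y$) matches the paper's, but there are three genuine gaps. First, the reduction is wrong: you claim the problem reduces to showing that no non-empty $V\in\mathscr{O}(X_o)$ lies in $E_\epsilon$, but that statement is false in general. The theorem makes no non-meagerness assumption on $X_o$; for $X_o=\mathbb{Q}$ one can have $E_\epsilon=X_o$ for a suitable separately continuous $f$ (this is essentially the content of Lemma~\ref{7.2}/Theorem~\ref{7.3}), and yet the conclusion holds trivially because $X_o$ is meager in itself. What you must show is that $\bigcup_n E_{1/n}$ is meager, and the only case needing work is when some $\mathrm{int}\,E_{1/n}$ is non-empty \emph{and of second category}; the paper isolates exactly this case via the decomposition $\mathrm{D}=(\bigcup_n\mathrm{int}\,E_n)\cup(\bigcup_n E_n\setminus\mathrm{int}\,E_n)$. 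Second, and relatedly, your assertion that ``a well-defined point $x^*\in\bigcap_n U_n$ can be read off'' is unjustified: a nested sequence of basic open boxes in a product of separable (not complete, not even metrizable) spaces can have empty intersection --- in $\mathbb{Q}$ take intervals shrinking to an irrational. The paper avoids this by organizing the construction as a strategy $\tau$ for Player $\beta$ covering \emph{all} responses of Player $\alpha$, and then invoking Theorem~\ref{2.3}: since $U_1=\mathrm{int}\,E_\ell$ is of second category, $\tau$ is not winning, so \emph{some} play satisfies $\bigcap_n U_n\neq\emptyset$. You cite Theorem~\ref{2.3} in your plan but never actually use it, and without the second-category hypothesis from the first point it is unavailable.

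Third, the step where you ``arrange'' the cluster point of $(y_n,y_n')$ to satisfy $y^*=y^{**}$ by trapping both sequences in a common shrinking $T_n$ does not work. The points $y_n=y(x_n)$ are dictated by the oscillation condition at the chosen $x_n\in V_n$ and cannot be confined to a prescribed small subset of $Y$; and even if they could, cluster points of two sequences lying in a decreasing chain of open sets need not coincide in a space with no separation or countability hypotheses (and nothing forces $f(x^*,\cdot)$ to be nearly constant on $\bigcap_n\overline{T_n}$). This is precisely the difficulty the paper's proof is built to overcome: the strategy records, at each finite stage, the estimates $\rho(f(\flat_{I_j,\vec{k}_j},y_{n+1}),f(\flat_{I_j,\vec{k}_j},y_{n+1}'))<\tfrac{1}{6\ell}$ at a countable set of ``rational'' points $\flat_{I_j,\vec{k}_j}$ which is dense in the part of the product containing $x^*$; passing to the limit and using continuity of $f^{y}$ and $f^{y'}$ transfers the bound to $\rho(f(x^*,y),f(x^*,y'))\le\tfrac{1}{6\ell}$ with $y\neq y'$ allowed. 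That mechanism is absent from your proposal, and without it the final triangle-inequality contradiction cannot be closed.
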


\begin{proof}
Let $\rho$ be a pseudo-metric for $Z$. Given $ n\in \mathbb{N}$, we can define a subset of $X_o$:
\begin{enumerate}
    \item[] $E_n=\left\{x\in X_o\,|\, \exists \, y\in Y\textit{ s.t. } \rho\textrm{-diam}(f(U\times V))>1/n \, \forall \,(U,V)\in \mathfrak{N}_{x}(X)\times \mathfrak{N}_{y}(Y)\right\}$.
\end{enumerate}
Set $ \mathrm{D}=\bigcup_{n\in\mathbb{N}}E_n $. Then $ f $ is jointly continuous at each point of $ (X_o\backslash \mathrm{D})\times Y $. We need only prove that $\mathrm{D}$ is meager in $X_o$. For that, by way of contradiction, suppose that $\mathrm{D}$ is non-meager in $X_o$. Then $ U_1:=\text{int}\,\overline{E}_\ell\neq\emptyset$ for some $\ell\in \mathbb{N} $, such that $ U_1\subseteq \overline{E}_\ell$ is a non-meager set in $X_o$ because
$\mathrm{D}\subseteq\left(\bigcup_{n\in\mathbb{N}}\text{int}\,\overline{E}_n\right)\cup\left(\bigcup_{n\in\mathbb{N}}\overline{E}_n\backslash\text{int}\,\overline{E}_n\right)$.
Assume $X=\prod_{i\in I}X_i$ is the product of a family of separable spaces. Let $\{a_{i,k}\,|\,k\in\mathbb{N}\}$, for each $i\in I$, be a dense sequence in $X_i$. Let $\flat=(\flat_i)_{i\in I}\in X_o$ be any fixed point. Given any finite set $I^\prime\subset I$ and $\vec{k}=(k_i)_{i\in I^\prime}\in\mathbb{N}^{I^\prime}$, let $|\vec{k}|=\sum_{i\in I^\prime}k_i$ and $\flat_{I^\prime,\vec{k}}=(a_{i,k_i})_{i\in I^\prime}\times(\flat_i)_{i\in I\setminus I^\prime}\in X_o$, where we have ignored the points $\flat_{I^\prime,\vec{k}}\notin X_o$.
Next we shall introduce a strategy $\tau$ with $\tau(X_o)=U_{1}$ for Player $\beta$ in the BM($X_o$)-game as follows:

Let $\tau(X_o)=U_1$; and for all $ V_1\in \mathscr{O}(U_1) $ and $ x_{1}\in V_1\cap E_\ell$ there is a point $ y_{1}\in Y $ such that $\rho\textrm{-diam}(f(U\times V))>1/n$ for all $(U,V)\in \mathfrak{N}_{x_1}(X)\times \mathfrak{N}_{y_1}(Y)$. Then there exists a point
$(x_{1}^{\prime},y_{1}^{\prime})\in V_1\times Y$ and a set $ \tau(X_o, V_1)=U_{2}=U_2^\prime\times\prod_{i\in I\setminus I_1}X_i\in \mathfrak{N}_{x_{1}^\prime}^\textrm{o}(V_1)$ where $I_1$ is some finite subset of $I$ and $U_2^\prime\in\mathscr{O}(\prod_{i\in I_1}X_i)$, such that:
\begin{gather*}
\rho(f(x_{1},y_{1}),f(x_{1}^{\prime},y_{1}^{\prime}))>\frac{1}{\ell};\\
\rho(f(U_{2}\times \{y_{1}^{\prime}\}),f(x_{1}^{\prime},y_{1}^{\prime}))<\frac{1}{6\ell},\quad
\rho(f(U_{2}\times \{y_{1}\}),f(x_{1},y_{1}))<\frac{1}{6\ell};\\
\rho(f(\flat,y_{1}),f(\flat,y_{1}^{\prime}))<\frac{1}{6\ell}.
\end{gather*}
For all $V_2\in \mathscr{O}(U_2)$ and $x_2\in V_2\cap E_\ell$, there is a point $y_2\in Y$ such that
$\rho\textrm{-diam}(f(U\times V))>1/n$ for all $(U,V)\in \mathfrak{N}_{x_2}(X)\times \mathfrak{N}_{y_2}(Y)$. Then there exists a point 
$(x_2^{\prime},y_2^{\prime})\in V_2\times Y$ and an open set $\tau(X_o, V_1, V_2)=U_3=U_3^\prime\times\prod_{i\in I\setminus I_2}X_i\in \mathfrak{N}_{x_2^\prime}^\textrm{o}(V_2)$ where $I_2$ is some finite subset of $I$ with $I_1\subseteq I_2$ and $U_3^\prime\in\mathscr{O}(\prod_{i\in I_2}X_i)$, such that:
\begin{gather*}
\rho(f(x_2,y_2),f(x_2^{\prime},y_2^{\prime}))>\frac{1}{\ell};\\
\rho(f(U_3\times \{y_2^{\prime}\}),f(x_2^{\prime},y_2^{\prime}))<\frac{1}{6\ell},\quad
\rho(f(U_3\times \{y_2\}),f(x_2,y_2))<\frac{1}{6\ell};\\
\rho(f(\flat_{I_1,\vec{k}_1},y_2),f(\flat_{I_1,\vec{k}_1},y_2^{\prime}))<\frac{1}{6\ell}\quad(\forall \vec{k}_1\in\mathbb{N}^{I_1}\textrm{ s.t. }|\vec{k}|\le \max\{1,\# I_1\}).
\end{gather*}
Inductively, we can find a sequence $I_1\subseteq I_2\subseteq I_3\subseteq\dotsm$ of finite subsets of $I$, a strategy $\tau$ for Player $\beta$ and a $\tau$-play $\{(U_n,V_n)\}_{n=1}^{\infty}$ of BM($X_o$) with $U_{n+1}=\tau(X_o, V_1,\dotsc,V_n)=U_{n+1}^\prime\times\prod_{i\in I\setminus I_n}X_i$ and $(x_n,y_n)\in (V_n\cap E_\ell)\times Y$, $(x_n^{\prime},y_n^{\prime})\in U_{n+1}\times Y $ such that:
\begin{gather*}
\rho(f(x_n,y_n),f(x_n^{\prime},y_n^{\prime}))>\frac{1}{\ell};\\
\rho(f(U_{n+1}\times \{y_n^{\prime}\}),f(x_n^{\prime},y_n^{\prime}))<\frac{1}{6\ell},\quad
\rho(f(U_{n+1}\times \{y_n\}),f(x_n,y_n))<\frac{1}{6\ell};\\
\rho(f(\flat_{I_j,\vec{k}_j},y_{n+1}),f(\flat_{I_j,\vec{k}_j},y_{n+1}^{\prime}))<\frac{1}{6\ell}\quad (\forall \vec{k}_j\in\mathbb{N}^{I_j}\textrm{ s.t. }|\vec{k}_j|\le\max\{n,\# I_n\}, j=1,\dotsc,n).
\end{gather*}
Let $J=\bigcup_{n=1}^\infty I_n\subseteq I$. Since $U_1$ is non-meager in $X_o$, it follows by Theorem~\ref{2.3} that $\tau$ with $\tau(X_o)=U_1$ is not a winning strategy for Player $\beta$ so that Player $\alpha$ has a choice
$\{V_n\}_{n=1}^{\infty}$ such that $\bigcap_{n=1}^{\infty}U_n \neq\emptyset$. We can choose $x^*=(x_i^*)_{i\in I}\in \bigcap_{n=1}^{\infty}U_n$ such that $x_i^*=\flat_i\ \forall i\in I\setminus J$. Since $Y\times Y$ is countably compact, we may assume (a subnet of) $(y_n,y_n^{\prime})\rightarrow (y,y^{\prime})\in Y\times Y$. Thus, for all $n,j\in\mathbb{N}$,
\begin{gather*}
\rho(f(x^*,y_n^{\prime}),f(x_n^{\prime},y_n^{\prime}))<\frac{1}{6\ell},\quad \rho(f(x^*,y_n),f(x_n,y_n))<\frac{1}{6\ell},\\
\rho(f(\flat_{I_j,\vec{k}_j},y),f(\flat_{I_j,\vec{k}_j},y^{\prime}))\leq\frac{1}{6\ell}\quad  \forall \vec{k}_j\in\mathbb{N}^{I_j}.
\end{gather*}
Since $\{\flat_{I_j,\vec{k}_j}\,|\, j\in\mathbb{N}\ \&\ \vec{k}_j\in\mathbb{N}^{I_j}\}$ is dense in $\prod_{i\in J}X_i\times(\flat_i)_{i\in I\setminus J}$, we can assume (a subnet of) $\flat_{I_j,\vec{k}_j}\rightarrow x^*$. Thus,
$\rho(f(x^*,y),f(x^*,y^{\prime}))\leq\frac{1}{6\ell}$, and so, as $n$ sufficiently big
\begin{equation*}\begin{split}
\frac{1}{\ell}&<\rho(f(x_n^{\prime},y_n^{\prime}),f(x_n,y_n)) \\
&\leq\rho(f(x_n^{\prime},y_n^{\prime}),f(x^*,y_n^{\prime}))+\rho(f(x^*,y_n^{\prime}),f(x^*,y^{\prime}))\\
&\quad+\rho(f(x^*,y^{\prime}),f(x^*,y))+\rho(f(x^*,y),f(x^*,y_n))+\rho(f(x^*,y_n),f(x_n,y_n))<\frac{1}{\ell}.
\end{split}\end{equation*}
This is impossible and thus, the proof is complete.
\end{proof}

\begin{2.5A}
If $X$ is an open subspace of a $\Pi$-separable space and if $X$ is Baire (resp. non-meager), then $X$ is an $\mathcal{N}$-space (resp. a g$\mathcal{N}$-space).
\end{2.5A}

This exactly proves Theorem~\ref{1.3}-(1) stated in $\S$\ref{s1} and generalizes \cite{CT79} and \cite[Thm.~6]{SR83}.

\begin{2.5B}
Let $X$ be a space containing a separable non-meager subset $F$. Let $f\colon X\times Y\rightarrow Z$ be a separately continuous mapping, where $Y\times Y$ is countably compact and $Z$ a pseudo-metric space. Then there exists a a point $x\in X$ such that $f$ is jointly continuous at each point of $\{x\}\times Y$. (So $X$ is a g$\mathcal{N}$-space.)
\end{2.5B}

Finally, by using Theorem~\ref{2.5} and a slight modification of the proof of Theorem~\ref{1.2} we can readily prove the following:

\begin{cor}\label{2.6}
Let $G$ be an open subgroup of a $\Pi$-separable non-meager right-topological group and $X$ a completely regular space such that $X\times X$ is countably compact. If $G\!\curvearrowright_\pi\!X$ is separately continuous, then $G\!\curvearrowright_\pi\!X$ is a topological flow.
\end{cor}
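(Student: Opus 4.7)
The plan is to mimic the proof of Theorem~\ref{1.2} essentially verbatim, with Theorem~\ref{2.5} playing the role that the g.$\mathcal{N}$-property played there. The one preliminary point I must verify is that the open subgroup $G$ inherits non-meagerness from the ambient group, so that the residual set produced by Theorem~\ref{2.5} is non-empty.

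To verify that $G$ is itself non-meager, I would exploit that in a right-topological group every right translation $x\mapsto xs$ is a homeomorphism, so the right cosets $\{Gs\}_{s\in S}$ partition the $\Pi$-separable ambient group $\widetilde{G}$ into pairwise disjoint open sets each homeomorphic to $G$. If $G$ were meager in itself, write $G=\bigcup_n F_n$ with $F_n$ closed nowhere dense in $G$; since $G$ is open in $\widetilde{G}$, each $F_n$ is also nowhere dense in $\widetilde{G}$, and for each $n$ the set $\bigcup_{s\in S}F_n s$ is closed in $\widetilde{G}$ (it meets each open coset $Gs$ in the closed translate $F_n s$) and has empty interior (any open $V$ inside it would give $V\cap Gs$ open in $Gs$ and contained in $F_n s$, forcing $V\cap Gs=\emptyset$ for every $s$). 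Hence $\widetilde{G}=\bigcup_n\bigcup_{s\in S}F_n s$ would be meager, a contradiction.

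With $G$ non-meager, I would argue by contradiction as in Theorem~\ref{1.2}. Suppose $(t_i,x_i)\to(t,x)$ in $G\times X$ but $t_ix_i\not\to tx$; after passing to a subnet, $tx\notin\Lambda:=\overline{\{t_ix_i\,|\,i\ge i_0\}}$, so complete regularity of $X$ supplies $\psi\in C(X,[0,1])$ with $\psi|_\Lambda\equiv 0$ and $\psi(tx)=1$. Set $f=\psi\circ\pi\colon G\times X\to[0,1]$, which is separately continuous because $\pi$ is. Theorem~\ref{2.5} then applies with $X_o=G$ (open in the $\Pi$-separable $\widetilde{G}$), $Y=X$ (so $Y\times Y$ countably compact by hypothesis) and $Z=[0,1]$ (pseudo-metric), yielding a residual $R\subseteq G$ such that $f$ is jointly continuous at each point of $R\times X$; since $G$ is non-meager, $R\neq\emptyset$, and I fix any $g\in R$.

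To finish, I would exploit the right-topological algebra exactly as in Theorem~\ref{1.2}. Continuity of right translation by $t^{-1}g$ forces $t_it^{-1}g\to g$ in $G$ (and $t_it^{-1}g\in G$ since $G$ is a subgroup), while separate continuity of $\pi$ in its second argument, applied to the fixed element $g^{-1}t\in G$, gives $g^{-1}tx_i\to g^{-1}tx$ in $X$. Joint continuity of $f$ at $(g,g^{-1}tx)$ therefore yields
\[
\psi(t_ix_i)=f\bigl(t_it^{-1}g,\,g^{-1}tx_i\bigr)\longrightarrow f(g,g^{-1}tx)=\psi(tx),
\]
which contradicts $\psi(t_ix_i)=0$ for $i\ge i_0$ and $\psi(tx)=1$. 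The main obstacle is really just the non-meagerness bookkeeping for $G$; once that is settled, Theorem~\ref{2.5} substitutes cleanly for the g.$\mathcal{N}$-property and the Theorem~\ref{1.2} scheme carries over with no further changes.
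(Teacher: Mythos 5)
Your argument is correct and is exactly the route the paper intends: Corollary~\ref{2.6} is stated there as following ``by using Theorem~\ref{2.5} and a slight modification of the proof of Theorem~\ref{1.2}'', which is precisely your substitution of the residual set from Theorem~\ref{2.5} for the g.$\mathcal{N}$-property in the Theorem~\ref{1.2} scheme. Your coset argument supplying the non-meagerness of the open subgroup $G$ (so that the residual set is non-empty) is a detail the paper leaves implicit, and it is carried out correctly.
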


\begin{se}[Pseudo-complete space]\label{2.7}
A space $X$ is called \textit{pseudo-complete} \cite{O60} if $X$ is quasi-regular and there exists a sequence $\{\mathcal{B}(n)\}_{n=1}^\infty$ of $\pi$-base in $X$ such that whenever $U_n\in\mathcal{B}(n)$ and $U_n\supseteq\overline{U}_{n+1}$, then $\bigcap_{n=1}^\infty U_n\not=\emptyset$.
\end{se}

\begin{thm}\label{2.8}
Any pseudo-complete $\Pi$-separable space is a Choquet $\mathcal{N}$-space.
\end{thm}

\begin{proof}
Let $X$ be a pseudo-complete space. Clearly, $X$ is a Choquet space (i.e., $\alpha$-favorable of BM($X$); cf. \cite[(5.1)]{O60}). Thus, $X$ is an $\mathcal{N}$-space by Theorems~\ref{2.2} and \ref{2.5}. 
%The proof is complete.
\end{proof}

\begin{rem}\label{2.9}
Any homogeneous non-meager topological space is Baire. In particular, any non-meager locally $\Pi$-separable left/right-topological group $G$ is an $\mathcal{N}$-space.
\end{rem}

\begin{proof}
Let $X$ be a homogeneous non-meager topological space. By Banach's category theorem (Thm.~A.2), there exists an open Baire subspace of $X$. Thus, $X$ is locally Baire so that $X$ is Baire. Further, if $G$ is non-meager locally $\Pi$-separable, then by Theorem~\ref{2.5} $G$ is an $\mathcal{N}$-space.
\end{proof}

\begin{rem}\label{2.10}
Let $X$ be a $\Pi$-separable space and $M(X)=\bigcup\{V\,|\,V\in\mathscr{O}(X)\textrm{ is meager in }X\}$. Then by Theorem~A.2, it follows that if $X$ is non-meager (so a g$\mathcal{N}$-space), then $X\setminus \overline{M(X)}\not=\emptyset$ is Baire and it is an $\mathcal{N}$-subspace of $X$ by Theorem~\ref{2.5}.
\end{rem}

Finally it should be noticed that if $X_o$ is an open subset of a non-normal space $X$, then one could not extend a continuous function $f\colon X_o\rightarrow\mathbb{R}$ to $X$. Thus, the $\mathcal{N}$-property need not be hereditary to open subsets in general; and in Theorem~\ref{2.5}, considering $f\colon X_o\times Y\rightarrow Z$ is better than considering $f\!\!\upharpoonright_{X_o\times Y}$ for some $f\colon X\times Y\rightarrow Z$. In addition, if $X_o$ is an open non-meager subset of a $\Pi$-separable space, then there exists an open Baire subspace $V$ of $X_o$ such that $V$ is $\Pi$-separable even if $X_o$ is not $\Pi$-separable itself. However, a residual subset of $V$ is possibly smaller than that of $X_o$.

%%%%%%%%%%%%%%%%%%%%%%%%%%%%%%%%%%%%%%%%%%%%%%%%%%%%%%%%%%%%%%%%%%%%%%%
\section{$\Pi$-pseudo-metric spaces, mixtures and g$\mathcal{N}$-property}\label{s3}
This section will be devoted to proving Theorems~\ref{1.3}-(2a) and (3) and the necessity part of Theorem~\ref{1.3}-(2b) stated in $\S$\ref{s1} under the guise of Theorems~\ref{3.21}, \ref{3.11} and \ref{3.4}. 
%Theorem~\ref{3.21} claims that if $X$ is an arbitrary product of pseudo-metric spaces, then $X$ is %Baire iff it is $\tau$-well $\beta$-d\'{e}favorable in the sense of Christensen/Saint-Raymond.

\begin{se}[Christensen/Saint-Raymond game, quasi-regular spaces and $\Pi$-pseudo-metric spaces]\label{3.1}
Let $X$ be a topological space. Then:
\item \textbf{A.} By a \textit{$\mathcal{J}_{\!p}(X)$-play} $\{(B_i;A_i,a_i)\}_{i=1}^\infty$ played by Player $\beta$ and Player $\alpha$ on $X$~(cf.~\cite{C81,SR83,D86,D87}), it means a sequence of elements of $\mathscr{O}(X)\times\mathscr{O}(X)\times X$ with $B_i\supseteq A_i\supseteq B_{i+1}$ for all $i\in\mathbb{N}$, where $B_i$ and $(A_i,a_i)$ are picked alternately by Player $\beta$ and Player $\alpha$, respectively; moreover, Player $\beta$ is granted the privilege of the first move as in the BM($X$)-game. Following Saint-Raymond (1983) \cite{SR83}, our W.C. is defined as follows:
\begin{enumerate}[\textbf{W.C.}:]
    \item Player $\alpha$ \textit{wins} this $\mathcal{J}_{\!p}(X)$-play $\{(B_i;A_i,a_i)\}_{i=1}^\infty$, if $\overline{\{a_i\,|\,i\in\mathbb{N}\}}\cap\left({\bigcap}_{i=1}^\infty A_i\right)\not=\emptyset$; otherwise, Player $\beta$ \textit{wins} this play.
\end{enumerate}
Note here that this game is denoted by $G_\sigma(X)$ or $\mathscr{G}_\sigma(X)$ in \cite{C81,SR83,B93}. Next, $X$ is called
\begin{enumerate}[(i)]
\item \textit{$\alpha$-favorable of $\mathcal{J}_{\!p}$} if Player $\alpha$ has a winning strategy in the $\mathcal{J}_{\!p}(X)$-game;

\item \textit{$\beta$-d\'{e}favorable of $\mathcal{J}_{\!p}$} if Player $\beta$ has no winning strategy in the $\mathcal{J}_{\!p}(X)$-game.
\end{enumerate}
Note that the $\beta$-d\'{e}favorable of $\mathcal{J}_{\!p}(X)$ $\Rightarrow$ the $\beta$-d\'{e}favorable of BM($X$).

\begin{note*}
If we write $L_i=\{a_1,\dotsc,a_i\}$, then $L_{i}\subseteq L_{i+1}$ and the play $\{(B_i;A_i,a_i)\}_{i=1}^\infty$ may be denoted by $\{(B_i;A_i,L_i)\}_{i=1}^\infty$ and $\overline{\{a_i\,|\,i\in\mathbb{N}\}}\cap\left({\bigcap}_{i=1}^\infty A_i\right)\not=\emptyset$ by $\left(\overline{\bigcup_iL_i}\right)\cap\left({\bigcap}_{i=1}^\infty A_i\right)\not=\emptyset$. Following Debs (1986) \cite{D86}, when each $L_i$ is a $\mathscr{K}$-analytic (resp. compact) set in $X$, then this play is called a $\mathcal{J}_{\!a}(X)$-play (resp.~$\mathcal{J}_{\!c}(X)$-play \cite{T85}); since $L_i$ consists of $i$ points, the play we consider here is called a $\mathcal{J}_{\!p}(X)$-play. If $L_i=X$ for all $i\in\mathbb{N}$, then this play, denoted $\mathcal{J}(X)$, is exactly the BM($X$)-play as in Definition~\ref{2.1}A. So, $\mathcal{J}(X)\supseteq\mathcal{J}_{\!a}(X)\supseteq\mathcal{J}_{\!c}(X)\supseteq\mathcal{J}_{\!p}(X)$. 
\end{note*}

\item \textbf{B.} $X$ is called \textit{quasi-regular} if for every $U\in\mathscr{O}(X)$ there exists a member $V\in\mathscr{O}(X)$ such that $\overline{V}\subseteq U$ (cf.~Oxtoby 1960 \cite{O60} and McCoy 1975 \cite{M75}).

\item \textbf{C.} $X$ is called a \textit{$\Pi$-pseudo-metric space}, if there exists a family $\{X_i\colon i\in I\}$ of pseudo-metric spaces such that $X\cong\prod_{i\in I}X_i$. In that case, we shall identify $X$ with $\prod_{i\in I}X_i$ if no confusion. Clearly, a $\Pi$-pseudo-metric space need not be pseudo-metrizable itself; but a pseudo-metric space is of course a $\Pi$-pseudo-metric space.

Although there is no special constraint for the sequence $\{a_i\}_{i=1}^\infty$ (e.g., $a_i\in A_i\ \forall i\in\mathbb{N}$) in the $\mathcal{J}_{\!p}(X)$-play, similar to the BM($X$)-game the $\beta$-d\'{e}favorable of $\mathcal{J}_{\!p}(X)$ may be hereditary to open subspaces as follows:

\begin{3.1D}
Let $X$ be any topological space. Then the following two statements are satisfied:
\begin{enumerate}[(1)]
\item $X$ is $\beta$-d\'{e}favorable of $\mathcal{J}_{\!p}$ iff every $U\in\mathscr{O}(X)$ is a $\beta$-d\'{e}favorable space of $\mathcal{J}_{\!p}$ itself.
\item $X$ is $\alpha$-favorable of $\mathcal{J}_{\!p}$ iff every $U\in\mathscr{O}(X)$ is an $\alpha$-favorable space of $\mathcal{J}_{\!p}$ itself.
\end{enumerate}
\end{3.1D}

\begin{proof}
(1): Sufficiency is obvious; for if $\tau$ is a strategy for Player $\beta$ in the $\mathcal{J}_{\!p}(X)$-game, then $\tau$ is also a strategy for Player $\beta$ in the $\mathcal{J}_{\!p}(\tau(X))$-game. For necessity, let $U\in\mathscr{O}(X)$ be not $\beta$-d\'{e}favorable of $\mathcal{J}_{\!p}$. Then there exists a winning strategy $\tau$ for Player $\beta$ in the $\mathcal{J}_{\!p}(U)$-game. We can define a strategy $\sigma$ for Player $\beta$ in the $\mathcal{J}_{\!p}(X)$-game accompanied by $\tau$ as follows:
Let $u\in U$ and set $\sigma(X)=\tau(U)=U_1\in\mathscr{O}(U)$. For any $(V_1,a_1)\in\mathscr{O}(U_1)\times X$, put $b_1=a_1$ if $a_1\in U$, $b_1=u$ if $a_1\notin U$. Now set $\sigma(X;V_1,a_1)=\tau(U;V_1,b_1)=U_2\in\mathscr{O}(V_1)$. For any $(V_2,a_2)\in\mathscr{O}(U_2)\times X$, put $b_2=a_2$ if $a_2\in U$, $b_2=u$ if $a_2\notin U$. Then set $\sigma(X;V_1,a_1;V_2,a_2)=\tau(U;V_1,b_1;V_2,b_2)=U_3\in\mathscr{O}(V_2)$. Repeating this indefinitely, we can define a strategy $\sigma$ for Player $\beta$ in the $\mathcal{J}_{\!p}(X)$-game accompanied by $\tau$. As $X$ is $\beta$-d\'{e}favorable of $\mathcal{J}_{\!p}$, it follows that there exists a $\sigma$-play $\{(U_i;V_i,a_i)\}_{i=1}^\infty$ of $\mathcal{J}_{\!p}(X)$ accompanied by the $\tau$-play $\{(U_i;V_i,b_i)\}_{i=1}^\infty$ of $\mathcal{J}_{\!p}(U)$ such that
$$
\overline{\{a_i\colon i\in\mathbb{N}\}}\cap\left({\bigcap}_{i=1}^\infty U_i\right)\not=\emptyset\quad \textrm{and}\quad \overline{\{b_i\colon i\in\mathbb{N}\}}\cap\left({\bigcap}_{i=1}^\infty U_i\right)=\emptyset.
$$
Let $A=\{a_i\,|\,i\in\mathbb{N}\textrm{ s.t. }a_i\notin U\}$. Then $\overline{A}\cap\left({\bigcap}_{i=1}^\infty U_i\right)\not=\emptyset$; and so, $A\cap U\not=\emptyset$, a contradiction.

(2): If Player $\alpha$ has a winning strategy in the $\mathcal{J}_{\!p}(U)$-game for any $U\in\mathscr{O}(X)$, then Player $\alpha$ also has a winning strategy in the $\mathcal{J}_{\!p}(X)$-game. So sufficiency is true.
Similar to the case (1) necessity is also true. The proof is complete.
\end{proof}

Since an open subset $U$ of a $\Pi$-pseudo-metric space $X$ need not have the cylindrical representation $U=\prod_{i\in I}U_i$, we cannot guarantee that $U$ is a $\Pi$-pseudo-metric space itself. However, if $U$ is an open non-meager subset of a $\Pi$-pseudo-metric space, then there always exists an open Baire subspace $V$ of $U$ such that $V$ is $\Pi$-pseudo-metrizable.

A regular space is of course quasi-regular; but not vice versa. In fact, unlike the regularity, the quasi-regularity is even not hereditary to closed subsets. Here are two counterexamples:

\begin{3.1E}[Suggested by the reviewer]
There exists a topological space $X$ which is countable and Hausdorff, with a dense open set $X_o$ homeomorphic to the space $\mathbb{Q}$ of rationals such that $X\setminus X_o$ is discrete and that every two non-void open subsets never have disjoint closures. Therefore every member of $\mathscr{O}(X_o)$ has cluster points in $X\setminus X_o$, and $X$ is connected.
This space is not quasi-regular since no member of $\mathscr{O}(X_o)$ can have its closure contained in $X_o$. Let $(x_n)_{n\in\mathbb{N}}$ be an enumeration of $X$ and consider the subspace $Y$ of $X\times\mathbb{Q}$ defined by
$Y =(X\times\{0\})\cup\{(x_n,2^{-p})\colon 0\le p\le n\}$
whose closed subset $Y_0 = X\times \{0\}$ is homeomorphic to $X$ hence Hausdorff non-quasi-regular. Then $Y_0$ is nowhere dense and every member of $\mathscr{O}(Y)$ contains some clopen singleton $\{(x_n,2^{-p})\}$. Thus $Y$ is quasi-regular Hausdorff.
\end{3.1E}

\begin{3.1F}
There exists a compact $T_1$-space $X$ which is countable, with a dense open set $X_0$
homeomorphic to the space $\mathbb{Q}$ of rationals such that $X\setminus X_0\not=\emptyset$ is discrete and $\overline{U}\cap\overline{V}\not=\emptyset$ for all $U,V\in\mathscr{O}(X)$. For example, $X=\mathbb{Q}\cup\{\infty\}$ is the one-point compactification of $\mathbb{Q}$ and $X_0=\mathbb{Q}$, where $\mathbb{Q}$ is regarded as a subspace of $\mathbb{R}$; thus, $\infty\in\overline{U}$ for every $U\in\mathscr{O}(X)$.
 Then $X$ is neither a Baire space nor a quasi-regular space, since no member of $\mathscr{O}(X)$ can have its closure contained
 in $X_0\in\mathscr{O}(X)$. Let $\{x_n\}_{n\in \mathbb{N}}$ be an enumeration of $X$ and we consider the subspace $Y$ of $X\times\mathbb{Q}$ defined by
 $Y =X\times\{0\}\cup\{(x_n, 2^{-j})\,|\, 1\le n\le j<\infty\}$
 whose closed subset $Y_0 = X\times\{0\}$ is homeomorphic to $X$ hence not quasi-regular. Clearly, $Y_0$ is nowhere
 dense in $Y$ and then every member of $\mathscr{O}(Y)$ contains some clopen singleton $\{(x_n, 2^{-j})\}$ in $Y$. Thus, $Y$ is
 quasi-regular, $T_1$, and non-regular such that $Y_0$ is a closed non-quasi-regular subspace of $Y$. In fact, $Y$ is a compact Baire space.
\end{3.1F}
\end{se}

\begin{lem}\label{3.2}
Let $X$ be a locally countably compact quasi-regular space. Then $X$ is Baire; and moreover, $X$ is $\beta$-d\'{e}favorable of $\mathcal{J}_{\!p}$.
\end{lem}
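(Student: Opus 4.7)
The plan is to establish the stronger conclusion that $X$ is $\beta$-d\'{e}favorable of $\mathcal{J}_{\!p}$-type; the Baire property will then follow at once, since $\mathcal{J}_{\!p}$-d\'{e}favorability implies BM-d\'{e}favorability (as noted in~\ref{3.1}\textbf{a}), which by Theorem~\ref{2.2} is equivalent to being Baire. So I fix an arbitrary strategy $\tau$ for Player $\beta$ in the $\mathcal{J}_{\!p}(X)$-game and set about producing a $\tau$-play on which Player $\alpha$ wins.

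First I would set $U_1=\tau(\emptyset)\in\mathscr{O}(X)$ and use local countable compactness to pick a point $x\in U_1$ together with a countably compact neighborhood $K$ of $x$, so that $\textrm{int}(K)\cap U_1\not=\emptyset$. Quasi-regularity then supplies a set $V_1\in\mathscr{O}(X)$ with $\overline{V_1}\subseteq\textrm{int}(K)\cap U_1$, and Player $\alpha$ picks any $a_1\in V_1$. Inductively, once $U_1\supseteq V_1\supseteq\dotsm\supseteq U_n\supseteq V_n$ and $a_1,\dotsc,a_n\in V_1\subseteq K$ have been played with $\overline{V_n}\subseteq U_n$, Player $\beta$'s response is prescribed by $U_{n+1}=\tau(\emptyset;V_1,a_1;\dotsc;V_n,a_n)\in\mathscr{O}(V_n)$, and Player $\alpha$ uses quasi-regularity again to choose a set $V_{n+1}\in\mathscr{O}(X)$ with $\overline{V_{n+1}}\subseteq U_{n+1}$ (so automatically $V_{n+1}\subseteq V_n\subseteq K$) together with any $a_{n+1}\in V_{n+1}$.

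The resulting $\tau$-play $\{(U_i;V_i,a_i)\}_{i=1}^\infty$ has its entire $\alpha$-point sequence inside the countably compact set $K$, hence $\{a_i\}$ admits a cluster point $a\in K$. For each fixed $n$ the tail $\{a_i:i>n\}$ lies in $V_n$, so every neighborhood of $a$ meets $V_n$, giving $a\in\overline{V_n}\subseteq U_n$; consequently $a\in\bigcap_{n\in\mathbb{N}}U_n$, while $a\in\overline{\{a_i:i\in\mathbb{N}\}}$ is automatic from $a$ being a cluster point of the sequence. Thus Player $\alpha$ wins, and since $\tau$ was arbitrary Player $\beta$ has no winning strategy. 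The hard part is really just the bookkeeping: one must pin down the countably compact trap $K$ at the very first move so that the whole $\alpha$-sequence $\{a_i\}$ is forced to live inside it, while simultaneously using quasi-regularity at every stroke to make the closures $\overline{V_n}$ shrink strictly inside the $U_n$; this is precisely the interplay that lets the two hypotheses of the lemma combine to guarantee a valid winning cluster point.
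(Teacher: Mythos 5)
Your proposal is correct and follows essentially the same route as the paper: trap the whole sequence $\{a_i\}$ inside a countably compact set fixed at the first stroke (the paper takes $\bar{V}_1\subseteq U_1$ countably compact directly, you take a neighborhood $K$ with $\bar{V}_1\subseteq\mathrm{int}(K)\cap U_1$), use quasi-regularity at each stroke to get $\bar{V}_{n+1}\subseteq U_{n+1}$, and extract a cluster point lying in $\overline{\{a_i\}}\cap\bigcap_n\bar{V}_n=\overline{\{a_i\}}\cap\bigcap_n U_n$. The deduction of the Baire property via Theorem~\ref{2.2} also matches the paper.
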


\begin{proof}
Let $\tau$ be a strategy for Player $\beta$ in the $\mathcal{J}_{\!p}(X)$-game. Let $U_1=\tau(X)$. Since $X$ is quasi-regular locally countably compact, we can select $a_1\in V_1\in\mathscr{O}(U_1)$ such that $\overline{V}_1\subseteq U_1$ is countably compact. Now let $U_2=\tau(X;V_1,a_1)\in\mathscr{O}(V_1)$ and then we can select $a_2\in V_2\in\mathscr{O}(U_2)$ such that $\overline{V}_2\subseteq U_2$. Continue this indefinitely, we can define a $\tau$-play $\{(U_i;V_i,a_i)\}_{i=1}^\infty$ of $\mathcal{J}_{\!p}$ such that $\emptyset\not=\bigcap_{n=1}^\infty\overline{\{a_i\,|\,i\ge n\}}\subseteq\bigcap_{n=1}^\infty\overline{V}_n=\bigcap_{n=1}^\infty U_n$. Thus, $\tau$ is not a winning strategy for Player $\beta$ in the $\mathcal{J}_{\!p}(X)$-game from Definition~\ref{3.1}A.
This also implies that there is no winning strategy for Player $\beta$ in the BM($X$)-game. Therefore, $X$ is Baire by Theorem~\ref{2.2}. The proof is complete.
\end{proof}

In fact, locally countably compact quasi-regular spaces are $\alpha$-favorable of $\mathcal{J}_{\!p}$ (so, Choquet spaces). 
Note that if $X$ is not quasi-regular, then the statement of Lemma~\ref{3.2} is false as shown by Example~\ref{3.1}F.

If $I$ is a finite set and if each $(X_i,\rho_i)$, $i\in I$, is a pseudo-metric space~\cite[p.~119]{K55}, then the product $\prod_{i\in I}X_i$ is also a pseudo-metric space with pseudo-metric $\rho_I^{}\colon (\prod_{i\in I}X_i)\times(\prod_{i\in I}X_i)\rightarrow\mathbb{R}_+$ that is canonically defined by $\rho_I^{}(x,y)=\max\{\rho_i(x_i,y_i)\colon i\in I\}$ for all $x=(x_i)_{i\in I}$, $y=(y_i)_{i\in I}\in \prod_{i\in I}X_i$.

Saint-Raymond (1983) \cite[Thm.~7]{SR83} asserts that if $X$ is a metric space, then it is Baire if and only if it is $\beta$-d\'{e}favorable of $\mathcal{J}_{\!p}$. Further Chaber-Pol (2005) \cite[Thm.~1.2]{CP05} implies that a $\Pi$-metric space is Baire if and only if it is an $\mathcal{N}$-space. In fact, we can extend this result as follows:

\begin{lem}\label{3.3}
Let $X$ be a $\Pi$-pseudo-metric space. Then the following are pairwise equivalent:
\begin{enumerate}[(1)]
\item $U\in\mathscr{O}(X)$ is non-meager;
\item Player $\beta$ has no winning strategy $\tau$ with $\tau(X)=U$ in the $\mathcal{J}_{\!p}(X)$-game;
\item Player $\beta$ has no winning strategy $\tau$ with $\tau(U)=U$ in the $\mathcal{J}_{\!p}(U)$-game.
\end{enumerate}
Consequently, if $X$ is a $\Pi$-pseudo-metric space, then $X$ is Baire if and only if it is $\beta$-d\'{e}favorable of $\mathcal{J}_{\!p}$.
\end{lem}

\begin{proof}
Let $X=\prod_{i\in I}X_i$, where each factor $X_i$ is a pseudo-metric space with a pseudo-metric $\rho_i$. Given $J\subset I$ a finite set,  $\rho_J^{}$ is the pseudo-metric on $\prod_{i\in J}X_j$ induced naturally by $\{\rho_i\colon i\in J\}$.

$(1)\Rightarrow(3)$: Suppose that $U_1=U\in\mathscr{O}(X)$ is a non-meager set in $X$. Let $\tau$ with $\tau(U)=U_1$ be a strategy for Player $\beta$ in the $\mathcal{J}_{\!p}(U)$-game. Let $\tau^\prime(X)=U_1^\prime=\tau(U)$.
For all $V_1\in\mathscr{O}(U_1^\prime)$ and all $a_1=(a_{1,i})_{i\in I}\in U_1^\prime$, write $U_2=\tau(U;V_1,a_1)$ and then define $\tau^\prime(X, V_1)=U_2^\prime\in\mathscr{O}(U_2)\,(\subseteq\mathscr{O}(V_1))$ such that
$U_2^\prime=U_2^{\prime\prime}\times\prod_{i\in I\setminus I_1}X_i$, where $I_1\subset I$ is some finite set and $U_2^{\prime\prime}\in\mathscr{O}(\prod_{i\in I_1}X_i)$ such that $\rho_{I_1}^{}\textrm{-diam}(U_2^{\prime\prime})<1/2$. Select arbitrarily $V_2\in\mathscr{O}(U_2^\prime)$ and $a_2=(a_{2,i})_{i\in I}\in U_2^\prime$ such that $a_{2,i}=a_{1,i}$ for all $i\in I\setminus I_1$.
We simply write $U_3=\tau(U;V_1,a_1; V_2,a_2)$; define $\tau^\prime(X, V_1, V_2)=U_3^\prime\in\mathscr{O}(U_3)\subseteq\mathscr{O}(V_2)$ such that
$U_3^\prime=U_3^{\prime\prime}\times\prod_{i\in I\setminus I_2}X_i$, where $I_2\subset I$ is some finite set with $I_1\subseteq I_2$ and $U_3^{\prime\prime}\in\mathscr{O}(\prod_{i\in I_2}X_i)$ with $\rho_{I_2}^{}\textrm{-diam}(U_3^{\prime\prime})<1/2^2$.
Select arbitrarily $V_3\in\mathscr{O}(U_3^\prime)$ and $a_3=(a_{3,i})_{i\in I}\in U_3^\prime$ such that $a_{3,i}=a_{1,i}$ for all $i\in I\setminus I_2$.

Continue this indefinitely, we can then define a sequence $I_1\subseteq I_2\subseteq I_3\subseteq\dotsm$ of non-void finite subsets of $I$, a sequence $\{(U_n;V_n,a_n)\}_{n=1}^\infty$---a $\tau$-play of $\mathcal{J}_{\!p}(U)$-type, and a sequence $\{(U_n^\prime,V_n)\}_{n=1}^\infty$---a $\tau^\prime$-play of BM($X$), such that $a_{n+1}=(a_{n+1,i})_{i\in I}\in U_{n+1}^\prime=U_{n+1}^{\prime\prime}\times\prod_{i\in I\setminus I_n}X_i\subseteq U_{n+1}$ with $a_{n+1,i}=a_{1,i}$ for all $i\in I\setminus I_n$ and $\rho_{I_n}^{}\textrm{-diam}(U_{n+1}^{\prime\prime})<1/2^n$ for all $n\ge1$. By Theorem~\ref{2.3}, $\tau^\prime$ is not a winning strategy for Player $\beta$ in the BM($X$)-game; and so, there is a choice $\{V_n\}_{n=1}^\infty$ for Player $\alpha$ such that $\bigcap_{n=1}^\infty U_n^\prime\subseteq\bigcap_{n=1}^\infty U_n\not=\emptyset$. Thus, for any point $x=(x_i)_{i\in I}\in\bigcap_{n=1}^\infty U_n$ with $x_i=a_{1,i}\ \forall i\in I\setminus(\bigcup_{n=1}^\infty I_n)$, by $\rho_i(a_{n+1,i},x_i)\le 1/2^n\ \forall i\in I$, it follows that $a_{n,i}\to x_i$ in $(X_i,\rho_i)$ as $n\to\infty$. Hence $\tau$ is not a winning strategy for Player $\beta$ in the $\mathcal{J}_{\!p}(U)$-game. 

$(3)\Rightarrow(2)$: Obvious by Definition~\ref{3.1}A.

$(2)\Rightarrow(1)$: Obvious by Theorem~\ref{2.3}. This is because if $\tau$ with $\tau(X)=U$ is a winning strategy for Player $\beta$ in the BM($X$)-game (cf.~Def.~\ref{2.1}A), then it is also a winning strategy for Player $\beta$ in the $\mathcal{J}_{\!p}(X)$-game (cf.~Def.~\ref{3.1}A).

Finally, if $X$ is Baire, then every $U\in\mathscr{O}(X)$ is non-meager; and so, $X$ is $\beta$-d\'{e}favorable of $\mathcal{J}_{\!p}$ by (1)$\Leftrightarrow$(2). Conversely, if $X$ is $\beta$-d\'{e}favorable of $\mathcal{J}_{\!p}$, then every $U\in\mathscr{O}(X)$ is non-meager so that $X$ is Baire.
The proof is complete.
\end{proof}

If $X$ is the product of an uncountable family of pseudo-metric spaces in Lemma~\ref{3.3}, then $X$ is not a pseudo-metrizable space; and in addition, a non-meager space need not be Baire (Ex.~\ref{1.4}). In view of that, Lemma~\ref{3.3} is an essential improvement of \cite[Thm.~7]{SR83} (see Thm.~\ref{7.8}-(1)).

If $X$ is $\beta$-d\'{e}favorable of $\mathcal{J}_{\!p}$, then Player $\beta$ has no winning strategy $\tau$ such that $\tau(X)$ is non-meager ($\because$ $X$ is Baire and each $U\in\mathscr{O}(X)$ is non-meager in this case).
However, a space that admits no $\mathcal{J}_{\!p}$-winning strategy $\tau$ with $\tau(X)$ being non-meager for Player $\beta$ is not necessarily $\beta$-d\'{e}favorable of $\mathcal{J}_{\!p}$; for instance, $X$ is a meager space itself.
Now we shall prove a theorem, which together with Lemma~\ref{3.3} implies the necessity part of Theorem~\ref{1.3}-(2) stated in $\S$\ref{s1}:

\begin{thm}\label{3.4}
Let $X$ be such that Player $\beta$ has no winning strategy $\tau$ with $\tau(X)$ being non-meager in the $\mathcal{J}_{\!p}(X)$-game. Let $Y$ be such that $Y\times Y$ is countably compact and $Z$ a pseudo-metric space. If $f\colon X\times Y\rightarrow Z$ is a separately continuous mapping, then there exists a residual set $R\subseteq X$ such that $f$ is jointly continuous at each point of $R\times Y$.
Consequently, if $X$ is an open subspace of a $\Pi$-pseudo-metric space, then there is a residual set $R\subseteq X$ such that $f$ is jointly continuous at each point of $R\times Y$.
\end{thm}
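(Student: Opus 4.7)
The plan is to adapt the proof of Theorem~\ref{2.5} by replacing the Banach-Mazur game with the Christensen $\mathcal{J}_{\!p}(X)$-game, using the side-points $a_n$ supplied on the fly by Player~$\alpha$ as a dynamically growing substitute for the countable dense family $\{\flat_{I^\prime,\vec{k}}\}$ that powered the earlier proof. As there, set
$$E_n=\bigl\{x\in X\,|\,\exists\, y(x)\in Y\text{ with }|f(U\times V)|_\rho>1/n\ \forall(U,V)\in\mathfrak{N}_x(X)\times\mathfrak{N}_{y(x)}(Y)\bigr\},$$
note each $E_n$ is closed, and observe that $f$ is jointly continuous at every point of $(X\setminus D)\times Y$ with $D=\bigcup_n E_n$. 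It suffices to show $D$ is meager. Assuming otherwise, some $E_\ell$ fails to be nowhere dense; writing $U_1=\textrm{int}\,E_\ell\neq\emptyset$, the closed nowhere-dense remainder $E_\ell\setminus U_1$ is meager in $X$, so $U_1$ itself is non-meager in~$X$.

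The core step is to build a strategy $\tau$ with $\tau(\emptyset)=U_1$ for Player~$\beta$, which the hypothesis will then rule out. Inductively, given a partial $\mathcal{J}_{\!p}(X)$-play $(V_1,a_1;\dotsc;V_n,a_n)$, pick $x_n\in V_n\subseteq E_\ell$ and set $y_n=y(x_n)$. Using continuity of $f^{y_n}$ at~$x_n$ and of each $f_{a_j}$ at $y_n$ for $j\le n$, shrink to a neighborhood $V_n^*\subseteq V_n$ of $x_n$ and a neighborhood $W_n\subseteq Y$ of $y_n$ on which $f^{y_n}$ stays within $1/(6\ell)$ of $f(x_n,y_n)$ and each $f_{a_j}$ stays within $1/(6\ell)$ of $f(a_j,y_n)$. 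By the definition of $E_\ell$, pick $(x_n^\prime,y_n^\prime)\in V_n^*\times W_n$ realizing $\rho(f(x_n,y_n),f(x_n^\prime,y_n^\prime))>1/\ell$; then a final shrinkage around $x_n^\prime$ by continuity of $f^{y_n^\prime}$ yields $U_{n+1}\in\mathscr{O}(V_n^*)$ with $\rho(f(U_{n+1}\times\{y_n^\prime\}),f(x_n^\prime,y_n^\prime))<1/(6\ell)$.

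Since $U_1=\tau(\emptyset)$ is non-meager, the hypothesis forces some $\tau$-play to satisfy $x^*\in\overline{\{a_n\}_{n\in\mathbb{N}}}\cap\bigcap_n U_n$. Countable compactness of $Y\times Y$ yields a cluster point $(y,y^\prime)$ of $\{(y_n,y_n^\prime)\}$ and a subnet $(y_{n_\beta},y_{n_\beta}^\prime)\to(y,y^\prime)$. For each fixed~$j$ the built-in estimate $\rho(f(a_j,y_n),f(a_j,y_n^\prime))<1/(6\ell)$ holds for $n\ge j$, and continuity of $f_{a_j}$ passes it to the limit to give $\rho(f(a_j,y),f(a_j,y^\prime))\le 1/(6\ell)$. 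Since $x^*\in\overline{\{a_j\}}$ and $f^y,f^{y^\prime}$ are continuous at $x^*$, approximating $x^*$ by some $a_j$ inside a sufficiently small neighborhood now gives $\rho(f(x^*,y),f(x^*,y^\prime))\le 1/(6\ell)$. Combining this with the built-in bounds $\rho(f(x^*,y_n),f(x_n,y_n))<1/(6\ell)$ and $\rho(f(x^*,y_n^\prime),f(x_n^\prime,y_n^\prime))<1/(6\ell)$ (valid since $x^*\in U_{n+1}$), together with $f(x^*,y_{n_\beta})\to f(x^*,y)$ and $f(x^*,y_{n_\beta}^\prime)\to f(x^*,y^\prime)$ by continuity of $f_{x^*}$, a five-term triangle estimate forces $1/\ell<5/(6\ell)$---the desired contradiction.

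The main obstacle is that, lacking a pre-existing countable dense subset, Player~$\beta$ cannot pre-commit a continuity budget; the resolution is that the Christensen-game winning condition $\overline{\{a_n\}}\cap\bigcap U_n\neq\emptyset$ packages together both a point $x^*$ inside the shrinking nest and a countable set $\{a_n\}$ accessing $x^*$---precisely the data needed to close the triangle estimate above. The final clause then follows because for $X$ open non-meager in a $\Pi$-pseudo-metric space $X^\dagger$, any strategy $\sigma$ in $\mathcal{J}_{\!p}(X)$ with $\sigma(\emptyset)$ non-meager in~$X$ extends (by replacing any Player-$\alpha$ move $a_i\notin X$ by a fixed $u\in X$, as in the proof of Lemma~\ref{3.1}D) to a strategy $\tau$ in $\mathcal{J}_{\!p}(X^\dagger)$ with the same initial move; Theorem~\ref{3.3} applied to $X^\dagger$ rules out $\tau$ as winning, and the resulting $\tau$-play restricts (using that $X$ is open in $X^\dagger$ and $x^*\in\bigcap U_n\subseteq X$) to a $\sigma$-play witnessing that $\sigma$ is not winning either.
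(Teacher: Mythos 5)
Your proposal is correct and follows essentially the same route as the paper's proof: the same oscillation sets $E_n$, the same $\mathcal{J}_{\!p}(X)$-strategy for Player $\beta$ built from the three families of $1/(6\ell)$-estimates (with your $W_n$ merely making explicit how the paper arranges $\rho(f(a_j,y_n),f(a_j,y_n'))<1/(6\ell)$), the same use of countable compactness of $Y\times Y$ and the five-term triangle inequality, and the same appeal to Theorem~\ref{3.3} (via the Lemma~\ref{3.1}D replacement trick) for the final clause. You even inherit the paper's harmless imprecision that the diameter condition $|f(V_n^*\times W_n)|_\rho>1/\ell$ only directly yields a point with $\rho(f(x_n,y_n),f(x_n',y_n'))>1/(2\ell)$, which is repaired by adjusting the constants.
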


\begin{proof}
By Lemma~\ref{3.3}, we need only prove the first part of Theorem~\ref{3.4}. For that, we let $\rho$ be the pseudo-metric for $Z$. For all $n\in \mathbb{N}$ let
\begin{enumerate}
    \item[] $E_n=\left\{x\in X\,|\, \exists \, y\in Y\textit{ s.t.} \,\rho\textrm{-diam}(f(U\times V))>\frac{1}{n} \, \forall \,(U,V)\in \mathfrak{N}_{x}(X)\times \mathfrak{N}_{y}(Y)\right\}$.
\end{enumerate}
Set $\mathrm{D}=\bigcup_{n\in\mathbb{N}}\overline{E}_n$. We need only prove that $\mathrm{D}$ is meager in $X$. By way of contradiction, suppose $ \mathrm{D} $ is non-meager in $X$. Then $ U_1:=\text{int}\,\overline{E}_\ell\neq\emptyset$ for some $\ell\in \mathbb{N} $, such that $ U_1 $ is non-meager in $X$ because
$\mathrm{D}=\left(\bigcup_{n\in\mathbb{N}}\text{int}\,\overline{E}_n\right)\cup\left(\bigcup_{n\in\mathbb{N}}\overline{E}_n\backslash\text{int}\,\overline{E}_n\right)$.
Next we introduce a strategy $\tau$ with $ \tau(X)=U_{1} $ for Player $ \beta $ in the $\mathcal{J}_{\!p}(X)$-game as follows:
Let $\tau(X)=U_1$; and for all $(V_1,a_1)\in \mathscr{O}(U_1)\times X$ and for every $ x_{1}\in V_1\cap E_\ell$ there is $y_1\in Y$ such that $\rho\textrm{-diam}(f(U\times V))>1/n$ for all $(U,V)\in \mathfrak{N}_{x_1}(X)\times \mathfrak{N}_{y_1}(Y)$. Then there exists $ (x_{1}^{\prime},y_{1}^{\prime})\in V_1\times Y$ and $\tau(X; V_1,a_1)=U_2\in \mathfrak{N}_{x_1^\prime}^\textrm{o}(V_1)$ such that:
\begin{gather*}
\rho(f(x_{1},y_{1}),f(x_{1}^{\prime},y_{1}^{\prime}))>{1}/{\ell},\\
\rho(f(U_{2}\times \{y_{1}^{\prime}\}),f(x_{1}^{\prime},y_{1}^{\prime}))<\frac{1}{6\ell},\quad
\rho(f(U_{2}\times \{y_{1}\}),f(x_{1},y_{1}))<\frac{1}{6\ell},\\
\rho(f(a_1,y_{1}),f(a_1,y_{1}^{\prime}))<\frac{1}{6\ell}.
\end{gather*}
Inductively, we can define a $\mathcal{J}_{\!p}(X)$-play $\{(U_i;V_i, a_i)\}_{i=1}^{\infty} $ with $ U_{i+1}=\tau(X; V_1,a_1;\dotsc;V_i,a_i)$ and $(x_i,y_i)\in (V_i\cap E_\ell)\times Y $, $(x_{i}^{\prime},y_{i}^{\prime})\in U_{i+1}\times Y$ such that:
\begin{gather*}
\rho(f(x_{i},y_{i}),f(x_{i}^{\prime},y_{i}^{\prime}))>\frac{1}{\ell},\\
\rho(f(U_{i+1}\times \{y_{i}^{\prime}\}),f(x_{i}^{\prime},y_{i}^{\prime}))<\frac{1}{6\ell},\quad \rho(f(U_{i+1}\times \{y_{i}\}),f(x_{i},y_{i}))<\frac{1}{6\ell},\\
\rho(f(a_{j},y_{i+1}),f(a_{j},y_{i+1}^{\prime}))<\frac{1}{6\ell}\ (j=1,\dotsc,i).
\end{gather*}
Since $U_1$ is non-meager, $\tau$ with $\tau(X)=U_1$ is not a winning strategy for Player $\beta$ so that Player $\alpha$ has a choice $ \{(V_i,a_i)\}_{i=1}^{\infty}$ with $\overline{\{a_i\colon i\in\mathbb{N}\}}\cap(\bigcap_{i=1}^{\infty}U_i) \neq\emptyset $. Let $ x\in \overline{\{a_i\colon i\in\mathbb{N}\}}\cap(\bigcap_{i=1}^{\infty}U_i)$. In addition, since $Y\times Y$ is countably compact, we may assume (a subnet of) $ (y_{i},y_{i}^{\prime})\rightarrow (y,y^{\prime})\in Y\times Y $. Thus, for all $ i,j\in\mathbb{N} $,
\begin{gather*}
\rho(f(x,y_{i}^{\prime}),f(x_{i}^{\prime},y_{i}^{\prime}))<\frac{1}{6\ell},\quad \rho(f(x,y_{i}),f(x_{i},y_{i}))<\frac{1}{6\ell},\quad
\rho(f(a_{j},y),f(a_{j},y^{\prime}))\leq\frac{1}{6\ell}.
\end{gather*}
By $x\in \overline{\{a_i\colon i\in\mathbb{N}\}}$, we can assume (a subnet of) $a_j\rightarrow x $. Thus,
$\rho(f(x,y),f(x,y^{\prime}))\leq\frac{1}{6\ell}$ and so
\begin{equation*}\begin{split}
\frac{1}{\ell}&<\rho(f(x_{i}^{\prime},y_{i}^{\prime}),f(x_{i},y_{i})) \\
&\leq\rho(f(x_{i}^{\prime},y_{i}^{\prime}),f(x,y_{i}^{\prime}))+\rho(f(x,y_{i}^{\prime}),f(x,y^{\prime}))\\
&\quad+\rho(f(x,y^{\prime}),f(x,y))+\rho(f(x,y),f(x,y_{i}))+\rho(f(x,y_{i}),f(x_{i},y_{i}))<\frac{1}{\ell}.
\end{split}\end{equation*}
This is impossible. The proof is complete.
\end{proof}

As analogous to the $\Pi$-separable space case, the second part of Theorem~\ref{3.4} is better than only choosing a basic open Baire subspace $U$ of $X$ such that for some residual set $R\subseteq U$, $f$ is jointly continuous at each point of $R\times Y$.

If $Y$ is a compact space, then $Y\times Y$ is compact so that $Y\times Y$ is countably compact. Now by Lemma~\ref{3.2} and Theorem~\ref{3.4} we can readily obtain the following.

\begin{cor}[{cf.~\cite[Thm.~5]{SR83}}]\label{3.5}
If $X$ is a $\beta$-d\'{e}favorable space of $\mathcal{J}_{\!p}$, then it is an $\mathcal{N}$-space. In particular, any locally countably compact quasi-regular space is an $\mathcal{N}$-space.
\end{cor}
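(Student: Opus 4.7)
The plan is to derive Corollary~\ref{3.5} as an immediate consequence of Theorem~\ref{3.4}, using Lemma~\ref{3.2} only for the second sentence. The key observation is that the hypothesis of Theorem~\ref{3.4}---namely, that Player~$\beta$ has no winning strategy $\tau$ with $\tau(\emptyset)$ non-meager in the $\mathcal{J}_{\!p}(X)$-game---is strictly weaker than $\beta$-défavorability of $\mathcal{J}_{\!p}$-type, which forbids \emph{any} winning strategy. So the first assumption subsumes what Theorem~\ref{3.4} demands.

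First I would fix an arbitrary compact space $Y$ and a separately continuous $f\colon X\times Y\rightarrow\mathbb{R}$. Since $Y$ compact implies $Y\times Y$ compact, hence countably compact, and since $\mathbb{R}$ is a pseudo-metric space, Theorem~\ref{3.4} applies and yields a \emph{residual} set $R\subseteq X$ on which $f$ is jointly continuous along $R\times Y$.

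Second, I would upgrade ``residual'' to ``dense $G_\delta$'', which is what the definition of $\mathcal{N}$-space requires. Inspecting the construction inside the proof of Theorem~\ref{3.4}, the complement of $R$ has the form $\mathrm{D}=\bigcup_{n\in\mathbb{N}}E_n$ with each $E_n$ closed in $X$; meagerness of $\mathrm{D}$ therefore forces each $E_n$ to be nowhere dense, so $R=\bigcap_{n\in\mathbb{N}}(X\setminus E_n)$ is automatically $G_\delta$. Density of $R$ follows from $X$ being Baire: $\beta$-défavorability of $\mathcal{J}_{\!p}$-type implies $\beta$-défavorability of BM-type (noted in Definition~\ref{3.1}a), and Theorem~\ref{2.2} then gives the Baire property of $X$, so the complement of the meager set $\mathrm{D}$ is dense. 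Hence $X$ is an $\mathcal{N}$-space.

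Finally, for the ``in particular'' clause, I would invoke Lemma~\ref{3.2}, which asserts precisely that a locally countably compact quasi-regular space is $\beta$-défavorable of $\mathcal{J}_{\!p}$-type; applying the statement just proved, such a space is an $\mathcal{N}$-space. No substantive obstacle arises---the whole corollary is a packaging of Theorem~\ref{3.4} and Lemma~\ref{3.2}---but the one point to be careful about is the $G_\delta$/density bookkeeping in the previous paragraph, since Theorem~\ref{3.4} is phrased only in terms of a ``residual'' set.
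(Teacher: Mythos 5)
Your proposal is correct and follows the same route as the paper, which derives the corollary directly from Theorem~\ref{3.4} (with $Y$ compact forcing $Y\times Y$ countably compact) together with Lemma~\ref{3.2}. The only addition is your explicit $G_\delta$/density bookkeeping, which the paper leaves implicit but which is exactly right: $R$ is the complement of $\bigcup_n E_n$ with each $E_n$ closed, and Baireness of $X$ (via Theorem~\ref{2.2}) makes this complement a dense $G_\delta$.
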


\begin{cor}[{cf.~\cite[Thm.~1]{E57} for $G,X$ to be locally compact Hausdorff}]
Let $G$ be a quasi-regular locally countably compact right-topological group and $X$ a completely regular space such that $X\times X$ is countably compact. If $G\!\curvearrowright_\pi\!X$ is separately continuous, then it is a topological flow.
\end{cor}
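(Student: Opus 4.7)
The plan is to transcribe the proof of Theorem~\ref{1.2}, with two adjustments: (i) the one-point compactification step, which relied on local compactness of the phase space, is replaced by a direct use of the complete regularity of $X$ to produce a separating function $\psi \in C(X,[0,1])$; and (ii) the g.$\mathcal{N}$-property of $G$, which we no longer have as a hypothesis and which in any case would not suffice because the phase space need not be compact, is replaced by Theorem~\ref{3.4} applied to $G$ itself.

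First I would record the input from the structural hypotheses on $G$. By Lemma~\ref{3.2}, the quasi-regular locally countably compact space $G$ is Baire and is $\beta$-d\'efavorable of $\mathcal{J}_{\!p}$-type, so in particular Player $\beta$ has no winning strategy $\tau$ with $\tau(\emptyset)$ non-meager in the $\mathcal{J}_{\!p}(G)$-game. Next, arguing by contradiction, assume a net $(t_i,x_i)\to (t,x)$ in $G\times X$ with $t_ix_i\not\to tx$ in $X$. After passing to a subnet and discarding finitely many terms, we may assume $tx\notin\Lambda:=\overline{\{t_ix_i\mid i\ge i_0\}}$. Since $X$ is completely regular, pick $\psi\in C(X,[0,1])$ with $\psi|_\Lambda\equiv0$ and $\psi(tx)=1$, and set $f=\psi\circ\pi\colon G\times X\to[0,1]$; separate continuity of $\pi$ transfers to $f$.

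Now I would apply Theorem~\ref{3.4} with the roles ``$X$'' $=G$, ``$Y$'' $=X$, ``$Z$'' $=[0,1]$: because $G$ is $\beta$-d\'efavorable of $\mathcal{J}_{\!p}$-type and $X\times X$ is countably compact, there exists a residual set $R\subseteq G$ such that $f$ is jointly continuous at each point of $R\times X$. Since $G$ is Baire, $R$ is non-empty; fix any $g\in R$. To derive the contradiction, I would exploit the right-topological structure of $G$ together with separate continuity of $\pi$: right multiplication by the fixed element $t^{-1}g$ is continuous, so $t_it^{-1}g\to g$; the map $x\mapsto(g^{-1}t)x$ is continuous, so $g^{-1}tx_i\to g^{-1}tx$. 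The associativity identity $t_ix_i=(t_it^{-1}g)(g^{-1}tx_i)$ and joint continuity of $f$ at $(g,g^{-1}tx)\in\{g\}\times X$ then give
\[
0=\psi(t_ix_i)=f\bigl(t_it^{-1}g,\,g^{-1}tx_i\bigr)\longrightarrow f(g,g^{-1}tx)=\psi(tx)=1,
\]
which is absurd.

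The main obstacle is pinpointing the correct substitute for the compactness/g.$\mathcal{N}$ hypothesis used in Theorem~\ref{1.2}: $X$ is only countably compact (with $X\times X$ countably compact), so the compact-$Y$ definition of g.$\mathcal{N}$-space does not apply directly to $f$. The Christensen-game Theorem~\ref{3.4} is precisely the tool designed for this weaker countable-compactness setting, and Lemma~\ref{3.2} is exactly what certifies that the hypothesis on $G$ in Theorem~\ref{3.4} is met. Once those two ingredients are in place, every remaining step---the Urysohn-type separation producing $\psi$, the passage to the subnet, and the algebraic identity collapsing $t_ix_i$---is a routine transcription of the proof of Theorem~\ref{1.2}.
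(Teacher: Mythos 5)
Your proposal is correct and follows essentially the same route as the paper: the paper's proof likewise picks $\psi$ by complete regularity, invokes Lemma~\ref{3.2} together with Theorem~\ref{3.4} to obtain a $g\in G$ at which $f=\psi\circ\pi$ is jointly continuous on $\{g\}\times X$, and derives the contradiction $0=\psi(t_ix_i)\to\psi(tx)=1$ via the identity $t_ix_i=(t_it^{-1}g)(g^{-1}tx_i)$. Your added remarks (why Theorem~\ref{3.4} replaces the g.$\mathcal{N}$ argument, and that $R\not=\emptyset$ because $G$ is Baire) are accurate elaborations of what the paper leaves implicit.
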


\begin{proof}
Let $\{(t_i,x_i)\,|\,i\in A\}$ be any net in $G\times X$ with $(t_i,x_i)\to (t,x)\in G\times X$. If $t_ix_i\not\to tx$ in $X$ and $\Lambda_{i_0}=\overline{\{t_ix_i\,|\,i\ge i_0\}}$ for all $i_0\in A$, then we may assume $tx\notin\Lambda_{i_0}$ for some $i_0\in A$. Let $\psi\in C(X,[0,1])$ with $\psi\!\upharpoonright_{\Lambda_{i_0}}\equiv0$ and $\psi(tx)=1$. Then by Lemma~\ref{3.2} and Theorem~\ref{3.4}, there exists an element $g\in G$ such that $f=\psi\circ\pi\colon G\times X\rightarrow[0,1]$ is jointly continuous at each point of $\{g\}\times X$.
Then by $t_it^{-1}g\to g$ and $g^{-1}tx_i\to g^{-1}tx$, it follows that
$0=\psi(t_ix_i)\to\psi(tx)=1$, which is impossible. %The proof is complete.
\end{proof}

\begin{cor}\label{3.7}
If $X$ is an open Baire subspace of a $\Pi$-pseudo-metric space, then $X$ is an $\mathcal{N}$-space.
\end{cor}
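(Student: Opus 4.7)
\medskip\noindent\textbf{Proof sketch for Corollary~\ref{3.7}.} The plan is to reduce directly to the second part of Theorem~\ref{3.4} and then upgrade the ``residual'' conclusion to a ``dense $G_\delta$'' conclusion by exploiting the hypothesis that $X$ is Baire. Concretely, fix a compact space $Y$ and a separately continuous function $f\colon X\times Y\to\mathbb{R}$. Since $Y$ is compact the product $Y\times Y$ is compact, hence countably compact, and $\mathbb{R}$ is of course a pseudo-metric space. Because $X$ is Baire it is in particular non-meager, so the consequence part of Theorem~\ref{3.4} applies (with $Z=\mathbb{R}$) and yields a residual set $R\subseteq X$ such that $f$ is jointly continuous at every point of $R\times Y$.

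It remains to extract a dense $G_\delta$ subset $R^\prime\subseteq R$. Write $X\setminus R=\bigcup_{n\in\mathbb{N}}F_n$ with each $F_n$ satisfying $\textrm{int}\,\bar{F}_n=\emptyset$. Replacing $F_n$ by $\bar{F}_n$ if necessary, we may assume each $F_n$ is closed nowhere dense, so that $R\supseteq R^\prime:=\bigcap_{n\in\mathbb{N}}(X\setminus\bar{F}_n)$ is a $G_\delta$ subset of $X$; each $X\setminus\bar{F}_n$ is open and dense in $X$, so the Baire hypothesis gives that $R^\prime$ is dense in $X$. Since $f$ is jointly continuous at each point of $R^\prime\times Y\subseteq R\times Y$, this exhibits the dense $G_\delta$ required by Namioka's definition and establishes that $X$ is an $\mathcal{N}$-space.

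The two ingredients are already in place: the nontrivial content is entirely carried by Theorem~\ref{3.4} (which in turn relies on Theorem~\ref{3.3} to verify the $\mathcal{J}_{\!p}$-game hypothesis for open non-meager subspaces of $\Pi$-pseudo-metric spaces), and the passage from ``residual'' to ``dense $G_\delta$'' is a routine Baire-category manoeuvre. There is no genuine obstacle to surmount here; the statement is recorded as a corollary precisely because once Theorem~\ref{3.4} is established, strengthening its hypothesis from ``non-meager'' to ``Baire'' immediately promotes its conclusion to the full $\mathcal{N}$-property. Note also that the argument does not require $X$ itself to be $\Pi$-pseudo-metrizable---only that it sits as an open Baire subspace of such a product.
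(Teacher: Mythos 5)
Your argument is correct and is essentially the paper's own proof, which simply cites Theorems~\ref{3.3} and \ref{3.4}; you have merely spelled out the routine step of passing from the residual set supplied by Theorem~\ref{3.4} to the dense $G_\delta$ demanded by the definition of an $\mathcal{N}$-space, using that $X$ is Baire. Nothing further is needed.
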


\begin{se}[$F$-group]\label{3.8}
Recall that a semitopological group is called an \textit{$F$-group} \cite{V77} if its inversion is continuous. Note that an Ellis group associated to a minimal flow is a compact $T_1$ $F$-group (not necessarily a topological group in general).
\end{se}

Finally we consider the case where $X\times X$ is locally countably compact instead of ``countably compact'' condition. The following result is known in the case that $G$ is regular (see, e.g., \cite[Thm.~5]{DX} by using a Baire curve theorem).

\begin{cor}\label{3.9}
Let $G$ be a quasi-regular locally countably compact $F$-group and $X$ a completely regular space such that $X\times X$ is locally countably compact. If $G\!\curvearrowright_\pi\!X$ is separately continuous, then it is a topological flow.
\end{cor}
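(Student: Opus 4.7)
The plan is to mimic the proof of the unnumbered corollary immediately following Corollary~\ref{3.5}, but to localize the countable-compactness requirement using local countable compactness of $X\times X$ and to exploit continuity of inversion in the $F$-group $G$ to extract a suitable base point. Fix a net $(t_i,x_i)\to(t,x)$ in $G\times X$ and, arguing by contradiction, suppose $t_ix_i\not\to tx$; after passing to a subnet I may take $tx\notin\Lambda:=\overline{\{t_ix_i\,|\,i\ge i_0\}}$ and, by complete regularity of $X$, pick $\psi\in C(X,[0,1])$ with $\psi|_\Lambda\equiv0$ and $\psi(tx)=1$. The contradiction will be obtained by arranging joint continuity of $\psi\circ\pi$ at a point of the form $(g,g^{-1}tx)$ and then feeding in the identity $\pi(t_it^{-1}g,g^{-1}tx_i)=t_ix_i$.

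Using local countable compactness of $X\times X$ at $(x,x)$, I would select an open neighborhood $W\subseteq X$ of $x$ with $\overline W\times\overline W$ countably compact. Since $G$ is quasi-regular and locally countably compact, Lemma~\ref{3.2} tells us that $G$ is $\beta$-d\'{e}favorable of $\mathcal{J}_{\!p}$-type and Baire, so every non-void open subset of $G$ is non-meager. Applying Theorem~\ref{3.4} to the separately continuous map $f\colon G\times\overline W\to[0,1]$, $f(s,y)=\psi(sy)$, yields a residual set $R\subseteq G$ such that $f$ is jointly continuous at every point of $R\times\overline W$.

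The main obstacle is to locate $g\in R$ with $g^{-1}tx\in W$, for only then does the restricted joint continuity give us joint continuity of $\psi\circ\pi$ at the specific point $(g,g^{-1}tx)$. This is exactly where the $F$-group hypothesis enters: inversion on $G$ is continuous and $r\mapsto r(tx)$ from $G$ to $X$ is continuous by separate continuity of $\pi$, so $\varphi\colon G\to X$, $\varphi(s)=s^{-1}tx$, is continuous; hence $E:=\varphi^{-1}(W)$ is open in $G$, and $t\in E$ since $\varphi(t)=x\in W$. Baireness of $G$ then gives $R\cap E\ne\emptyset$, and I fix any $g\in R\cap E$. Finally, with $s_i:=t_it^{-1}g\to g$ (continuity of right translation in $G$) and $z_i:=g^{-1}tx_i\to g^{-1}tx\in W$ (separate continuity), we have $(s_i,z_i)\to(g,g^{-1}tx)$ in $G\times\overline W$, so the joint continuity at $(g,g^{-1}tx)$ forces $\psi(t_ix_i)=f(s_i,z_i)\to\psi(tx)=1$, contradicting $\psi(t_ix_i)=0$ for $i\ge i_0$.
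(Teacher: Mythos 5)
Your proof is correct and follows essentially the same route as the paper's: both obtain a countably compact neighborhood $U$ (your $\overline W$) of the relevant point of $X$, apply Lemma~\ref{3.2} and Theorem~\ref{3.4} to the restriction of $\psi\circ\pi$ to $G\times U$, and use continuity of inversion to find $g$ in the residual set with $g^{-1}tx\in U$ (the paper's choice of $V\in\mathfrak{N}_e(G)$ with $V^{-1}x_0\subseteq U$ is exactly your $\varphi^{-1}(W)$). The only cosmetic difference is that the paper first reduces to joint continuity at points of $\{e\}\times X$, while you argue directly at a general point $(t,x)$.
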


\begin{proof}
It is enough to prove that $\pi$ is jointly continuous at each point of $\{e\}\times X$. Let $x_0\in X$ and suppose to the contrary that $\pi$ is not continuous at $(e,x_0)$. Then we may assume there exists a net $\{(t_i,x_i)\,|\,i\in\Lambda\}$ in $G\times X$ with $(t_i,x_i)\to(e,x_0)$ and such that $x_0=ex_0\notin \bigcap_{i\in\Lambda}\overline{\{t_jx_j\,|\,j\ge i\}}$. Then $x_0\notin W:=\overline{\{t_jx_j\,|\,j\ge i_0\}}$ for some $i_0\in\Lambda$. Further, there is a continuous function $\psi\colon X\rightarrow[0,1]$ such that $\psi(x_0)=0$ and $\psi\!\upharpoonright_W\equiv1$.
Let $U\in\mathfrak{N}_{x_0}(X)$ such that $U\times U$ is countably compact. Then we can choose a set $V\in\mathfrak{N}_e(G)$ such that $V^{-1}x_0\subseteq U$. Write $f\colon G\times U\rightarrow[0,1]$ for the restriction of $\psi\circ\pi$ to $G\times U$. Then by Lemma~\ref{3.2} and Theorem~\ref{3.4}, there exists a dense set $R\subseteq G$ such that $f$ is jointly continuous at each point of $R\times U$. Now, let $a\in V\cap R$. Then by $t_ia\to a$ and $a^{-1}x_i\to a^{-1}x_0\in U$, it follows that $1=\psi(t_ix_i)=f(t_ia,a^{-1}x_i)\to f(a,a^{-1}x_0)=\psi(x_0)=0$, which is impossible. The proof is complete.
\end{proof}

\begin{rem}\label{3.10}
Let $X$ be a $\Pi$-pseudo-metric space and $M(X)=\bigcup\{V\in\mathscr{O}(X)\,|\,V\textrm{ is meager}\}$. By Theorem~A.2, if $X$ is non-meager (so a g$\mathcal{N}$-space by Theorem~\ref{1.3}-(2b)), then $X\setminus \overline{M(X)}\not=\emptyset$ is Baire and it is an $\mathcal{N}$-subspace of $X$ by Lemma~\ref{3.3} and Theorem~\ref{3.4}.
\end{rem}

\begin{thm}\label{3.11}
    Let $X$ be a separable space and $Y$ a pseudo-metric space. Then:
    \begin{enumerate}[(a)]
        \item If $X\times Y$ is a Baire space, then it is $\beta$-d\'{e}favorable of $\mathcal{J}_{\!p}$ and an $\mathcal{N}$-space.

        \item If $X\times Y$ is a non-meager space, then it is a g$\mathcal{N}$-space.
    \end{enumerate}    
\end{thm}

\begin{proof}
First of all, assume that $X$ has no isolated points without loss of generality; for otherwise, the statement may be reduced to the special case $X=\{x\}$ and follows from Theorem~\ref{3.4}. Let $\rho$ be a pseudo-metric for $Y$ and write $B_\rho(y,r)=\{y^\prime\in Y\colon \rho(y,y^\prime)<r\}$ for all $y\in Y$ and $r>0$.

    (a): Suppose $X\times Y$ is a Baire space. Let $\{x_i\colon i\in\mathbb{N}\}$ be dense in $X$. 
    Let $\tau$ be a strategy for Player $\beta$ in the $\mathcal{J}_{\!p}(X\times Y)$-game. In view of Corollary~\ref{3.5}, we need only prove that $\tau$ is not a winning strategy for Player $\beta$ in the $\mathcal{J}_{\!p}(X\times Y)$-game.  
    For that, we will define a strategy $\tau^\prime$ for Player $\beta$ in the BM($X\times Y$)-game as follows:
    
    Firstly, let $U_1=\tau(X\times Y)\supseteq U_{1,1}\times U_{2,1}$, where $U_{1,1}\in\mathscr{O}(X)$ and $U_{2,1}=B_\rho(y_1,r_1)\in\mathscr{O}(Y)$ for some $r_1\in(0,1/2]$ and some $y_1\in Y$; then define $\tau^\prime(X\times Y)=U_{1,1}\times U_{2,1}$. Now, for all $V_{1}\in\mathscr{O}(U_{1,1}\times U_{2,1})$, let $a_1=(x_1,y_1)$, $U_2=\tau(X\times Y; V_1,a_1)\supseteq U_{1,2}\times U_{2,2}$, where $U_{1,2}\in\mathscr{O}(X)$ and $U_{2,2}=B_\rho(y_2,r_2)\in\mathscr{O}(Y)$ for some $r_2\in(0,1/2^2]$ and $y_2\in Y$; then define $\tau^\prime(X\times Y, V_1)=U_{1,2}\times U_{2,2}$.
    Next, for all $V_{2}\in\mathscr{O}(U_{1,2}\times U_{2,2})$, let $a_2=(x_2,y_2)$, $U_3=\tau(X\times Y; V_1,a_1;V_2,a_2)\supseteq U_{1,3}\times U_{2,3}$, where $U_{1,3}\in\mathscr{O}(X)$ and $U_{2,3}=B_\rho(y_3,r_3)\in\mathscr{O}(Y)$ for some $r_3\in(0,1/2^3]$ and $y_3\in Y$; then define $\tau^\prime(X\times Y,V_{1}, V_{2})=U_{1,3}\times U_{2,3}$. Repeating this procedure indefinitely, we can construct a $\tau$-play $\{(U_i;V_i,a_i)\}_{i=1}^\infty$ and define a strategy $\tau^\prime$ for Player $\beta$ in the BM($X\times Y$)-game.

    Since $X\times Y$ is a Baire space, by Theorem~\ref{2.2} there are sequences $\{V_{i}\}_{i=1}^\infty$ against $\tau^\prime$ played by Player $\alpha$ such that $\bigcap_{i=1}^\infty V_{i}=\bigcap_{i=1}^\infty U_i\not=\emptyset$. Then for $(x,y)\in\bigcap_{i=1}^\infty V_{i}$, we have that $(x,y)\in\bigcap_{i=1}^\infty U_i$ and $x\in\overline{\{x_i\colon i\in\mathbb{N}\}}$ and $y_i\to y$. Since $X$ has no isolated point, there is a subnet $\{x_{i(a)}\}$ of $\{x_i\}$ with $x_{i(a)}\to x$ and $y_{i(a)}\to y$.
    Thus, $\overline{\{a_i=(x_i,y_i)\colon i\in\mathbb{N}\}}\cap\left(\bigcap_{i=1}^\infty U_i\right)\not=\emptyset$. Therefore, $X\times Y$ is $\beta$-d\'{e}favorable of $\mathcal{J}_{\!p}$.

    (b): If $X\times Y$ is only a non-meager space, then by Banach's category theorem (Thm.~A.2), there exists a set $W\in\mathscr{O}(X\times Y)$ such that $W$ is a Baire subspace of $X\times Y$. Further, there exist $U\in\mathscr{O}(X)$ and $V\in\mathscr{O}(Y)$ such that $U\times V\subseteq W$ and $U\times V$ is Baire. Then by (a), $X\times Y$ is a g$\mathcal{N}$-space. 
    The proof is complete.
\end{proof}

If one of $X$ and $Y$ is a Choquet space and the other is a Baire space, then $X\times Y$ is a Baire space (cf.~\cite[Thm.~9]{SR83}).
However, a Baire (non-meager) space need not be an $\mathcal{N}$-space (a g$\mathcal{N}$-space) as shown by Example~\ref{7.5}. In view of that, the mixing of $X$-factor separability and $Y$-factor pseudo-metrizability in $X\times Y$ is critical for Theorem~\ref{3.11}.

\begin{cor}
    Let $G=G_1\times G_2$, where $G_1$ is a separable topological group and $G_2$ a metrizable topological group, and $X$ a locally compact regular space. If $G\curvearrowright X$ is separately continuous, then $G\curvearrowright X$ is a topological flow.
\end{cor}

\begin{proof}
    By Theorems~\ref{1.2} and \ref{3.11}.
\end{proof}

\begin{que}
    Let $X$ be a $\Pi$-separable space and $Y$ a pseudo-metric space such that $X\times Y$ is non-meager. Is $X\times Y$ a g$\mathcal{N}$-space?
\end{que}

\begin{que}
    Let $X$ be a separable space and $Y$ a $\Pi$-pseudo-metric space such that $X\times Y$ is non-meager. Is $X\times Y$ a g$\mathcal{N}$-space?
\end{que}

\begin{que}
    Let $X$ be a $\Pi$-separable space and $Y$ a $\Pi$-pseudo-metric space such that $X\times Y$ is non-meager. Is $X\times Y$ a g$\mathcal{N}$-space?
\end{que}

\begin{se}[$\sigma$-well $\alpha$-favorable/$\beta$-d\'{e}favorable spaces of $\mathcal{J}_{\!p}$~\cite{C81, SR83}]\label{3.16}
Let $X$ be any topological space. Let $\{(B_n;A_n,a_n)\}_{n=1}^\infty$ be a $\mathcal{J}_{\!p}(X)$-play as in Definition~\ref{3.1}A. Then in the sense of Christensen~\cite{C81}, 
\begin{enumerate}[\textbf{W.C.}:]
    \item Player $\alpha$ \textit{wins $\sigma$-well} this play, if $\{a_{n(i)}\}_{i=1}^\infty$ has a cluster point in $\bigcap_{n=1}^\infty A_n$ for every subsequence $\{a_{n(i)}\}_{i=1}^\infty$ of $\{a_n\}_{n=1}^\infty$; otherwise, Player $\beta$ \textit{wins $\sigma$-well} this play.
\end{enumerate}
\item\textbf{A.} $X$ is called a \textit{$\sigma$-well $\alpha$-favorable space of $\mathcal{J}_{\!p}$}, if Player $\alpha$ has a  $\sigma$-well winning strategy in the $\mathcal{J}_{\!p}(X)$-game.

\item\textbf{B.} $X$ is called a \textit{$\sigma$-well $\beta$-d\'{e}favorable space of $\mathcal{J}_{\!p}$}, if Player $\beta$ has no  $\sigma$-well winning strategy in the $\mathcal{J}_{\!p}(X)$-game. 
\end{se}

\begin{se}[$\tau$-well $\alpha$-favorable/$\beta$-d\'{e}favorable spaces of $\mathcal{J}_{\!p}$~\cite{C81, SR83}]\label{3.17}
Let $X$ be any topological space. Let $\{(B_n;A_n,a_n)\}_{n=1}^\infty$ be a $\mathcal{J}_{\!p}(X)$-play. Then in the sense of Christensen~\cite{C81},
\begin{enumerate}[\textbf{W.C.}:]
    \item Player $\alpha$ \textit{wins $\tau$-well} this play, if $\{a_{n(i)}\colon i\in D\}$ has a cluster point in $\bigcap_{n=1}^\infty A_n$ for every subnet $\{a_{n(i)}\colon i\in D\}$ of $\{a_n\}_{n=1}^\infty$; otherwise, Player $\beta$ \textit{wins $\tau$-well} this play.
\end{enumerate}
\item\textbf{A.} $X$ is called a \textit{$\tau$-well $\alpha$-favorable space of $\mathcal{J}_{\!p}$}, if Player $\alpha$ has a $\tau$-well winning strategy in the $\mathcal{J}_{\!p}(X)$-game.

\item\textbf{B.} $X$ is called a \textit{$\tau$-well $\beta$-d\'{e}favorable space of $\mathcal{J}_{\!p}$}, if Player $\beta$ has no  $\tau$-well winning strategy in the $\mathcal{J}_{\!p}(X)$-game.

\item\textbf{C.} Clearly, for any topological space $X$, in the $\mathcal{J}_{\!p}(X)$-game we have the implications:
$$
\tau\textrm{-well }\alpha\textrm{-favorable }\Rightarrow
\begin{cases} 
\sigma\textrm{-well }\alpha\textrm{-favorable}\\
\tau\textrm{-well }\beta\textrm{-d\'{e}favorable} 
\end{cases}
\!\!\Rightarrow\sigma\textrm{-well }\beta\textrm{-d\'{e}favorable}\Rightarrow\beta\textrm{-d\'{e}favorable}.
$$
\end{se}

Note that the class of $\tau$-well $\alpha$-favorable spaces of $\mathcal{J}_{\!p}$ is closed under arbitrary products~\cite{C81}. Moreover, the product of a $\sigma$-well $\beta$-d\'{e}favorable space and a $\tau$-well $\alpha$-favorable space of $\mathcal{J}_{\!p}$ is $\sigma$-well $\beta$-d\'{e}favorable \cite[Thm.~8]{SR83}. In fact, we can prove the following theorem, in which the ``$\alpha$-favorablity'' condition is essential for otherwise there are counterexamples \cite{C76, FK78}. 

\begin{thm}\label{3.18}
Let $X$ and $Y$ be two topological spaces; then we have the following two statements:
\begin{enumerate}[(1)]
\item If $X$ is $\tau$-well $\alpha$-favorable of $\mathcal{J}_{\!p}$ and $Y$ is $\sigma$-well $\beta$-d\'{e}favorable of $\mathcal{J}_{\!p}$, then $X\times Y$ is $\sigma$-well $\beta$-d\'{e}favorable of $\mathcal{J}_{\!p}$ (cf.~Saint-Raymond 1983 \cite[Thm.~8]{SR83}).

\item If $X$ is $\sigma$-well $\alpha$-favorable of $\mathcal{J}_{\!p}$ and $Y$ is $\tau$-well $\beta$-d\'{e}favorable of $\mathcal{J}_{\!p}$, then $X\times Y$ is $\sigma$-well $\beta$-d\'{e}favorable of $\mathcal{J}_{\!p}$.
\end{enumerate}  
\end{thm}

\begin{proof}
First of all, for all $U\in\mathscr{O}(X\times Y)$, let 
\begin{enumerate}
    \item[] $\mathcal{Y}[U]=\{W\in\mathscr{O}(Y)\colon \exists W^\prime\in\mathscr{O}(X) \textrm{ s.t. }W^\prime\times W\subseteq U\}$; 
\end{enumerate}
and for all $W\in\mathcal{Y}[U]$, let
\begin{enumerate}
    \item[] $W^*=\bigcup\{W^\prime\in\mathscr{O}(X)\,|\,W^\prime\times W\subseteq U\}\in\mathscr{O}(X)$.
\end{enumerate}
Then $W^*\times W\subseteq U$ for all $U\in\mathscr{O}(X\times Y)$ and all $W\in\mathcal{Y}[U]$.
Let $t$ be any strategy for Player $\beta$ in the $\mathcal{J}_{\!p}(X\times Y)$-game.
Since $X$ is a $\tau$-well ($\sigma$-well) $\alpha$-favorable space of $\mathcal{J}_{\!p}$, there is a $\tau$-well ($\sigma$-well) winning strategy $s$ for Player $\alpha$ in the $\mathcal{J}_{\!p}(X)$-game. We need only prove that $t$ is not a $\sigma$-well winning strategy for Player $\beta$ in the $\mathcal{J}_{\!p}(X\times Y)$-game. For that, we will introduce an auxiliary strategy $\theta$ for Player $\beta$ in the $\mathcal{J}_{\!p}(Y)$-game as follows:

Let $U_1=t(X\times Y)$ and $\theta(Y)=W_1\in\mathcal{Y}[U_1]$; then, let $(V_{1,1},x_1)=s(W_1^*)$; and for every $(V_{2,1},y_1)\in\mathscr{O}(W_1)\times Y$, as Player $\alpha$'s possible answer to Player $\beta$'s 1st move $W_1$ in the $\mathcal{J}_{\!p}(Y)$-game, write $(V_1,a_1)=(V_{1,1}\times V_{2,1},(x_1,y_1))$ as the possible choice of Player $\alpha$ in the $\mathcal{J}_{\!p}(X\times Y)$-game to answer Player $\beta$'s 1st move $U_1$. Let $U_2=t(X\times Y;V_1,a_1)$, $\theta(Y;V_{2,1},y_1)=W_2\in\mathcal{Y}[U_2]$; then, write $(V_{1,2},x_2)=s(W_1^*,W_2^*)$; and for all $(V_{2,2},y_2)\in\mathscr{O}(W_2)\times Y$, as the possible choice of Player $\alpha$ at the 2nd stroke in the $\mathcal{J}_{\!p}(Y)$-game, write $(V_2,a_2)=(V_{1,2}\times V_{2,2},(x_2,y_2))$ as Player $\alpha$'s possible answer to Player $\beta$'s 2nd move $U_2$. Let $U_3=t(X\times Y; V_1,a_1; V_2,a_2)$, $\theta(Y;V_{2,1},y_1; V_{2,2},y_2)=W_3\in\mathcal{Y}[U_3]$; then, write $(V_{1,3},x_3)=s(W_1^*,W_2^*,W_3^*)$; and for all $(V_{2,3},y_3)\in\mathscr{O}(W_3)\times Y$, as the possible choice of Player $\alpha$ at the 3rd stroke in the $\mathcal{J}_{\!p}(Y)$-game, write $(V_3,a_3)=(V_{1,3}\times V_{2,3},(x_3,y_3))$. Continue this procedure indefinitely, we have introduced a strategy $\theta$ for Player $\beta$ in the $\mathcal{J}_{\!p}(Y)$-game based on strategies $t$ and $s$.

Since $Y$ is $\sigma$-well ($\tau$-well) $\beta$-d\'{e}favorable of $\mathcal{J}_{\!p}$, $\theta$ is not a $\sigma$-well ($\tau$-well) winning strategy for Player $\beta$ in the $\mathcal{J}_{\!p}(Y)$-game. Thus, Player $\alpha$ has a counterforce $\{(V_{2,n},y_n)\}_{n=1}^\infty$ against $\theta$ such that Player $\alpha$ wins $\sigma$-well ($\tau$-well) the $\theta$-play $\{(W_n;V_{2,n},y_n)\}_{n=1}^\infty$. This implies that Player $\alpha$ wins $\sigma$-well the $t$-play $\{(U_n;V_n,a_n)\}_{n=1}^\infty$ in the $\mathcal{J}_{\!p}(X\times Y)$-game, where $(V_n,a_n)=(V_{1,n}\times V_{2,n},(x_n,y_n))$ and $(V_{1,n},x_n)=s(W_1^*,\dotsc,W_n^*)$. Therefore, $X\times Y$ is $\sigma$-well $\beta$-d\'{e}favorable of $\mathcal{J}_{\!p}$.
\end{proof}

\begin{thm}\label{3.19}
If $X$ is a Choquet pseudo-metric space, then it is $\tau$-well $\alpha$-favorable of $\mathcal{J}_{\!p}$. In particular, if $X_\alpha$, $\alpha\in A$, is a family of Choquet pseudo-metric spaces, then $\prod_{\alpha\in A}X_\alpha$ is $\tau$-well $\alpha$-favorable of $\mathcal{J}_{\!p}$ and so it is a Choquet $\mathcal{N}$-space.
\end{thm}  
 
\begin{proof}
    Let $\eta^\prime$ be a winning strategy for Player $\alpha$ in the BM($X$)-game. We will define a $\tau$-well winning strategy $\eta$ for Player $\alpha$ in the $\mathcal{J}_{\!p}(X)$-game as follows: 
    If Player $\beta$ begins with $U_1\in\mathscr{O}(X)$, then let $V_1^\prime=\eta^\prime(U_1)$ and we can choose $a_1\in V_1\subseteq V_1^\prime$ with $\rho\textrm{-diam}(V_1)<1/2$. Now define $\eta(U_1)=(V_1,a_1)$. If $U_2\in\mathscr{O}(V_1)$ is the possible choice of Player $\beta$ at the 2nd stroke, then set $V_2^\prime=\eta^\prime(U_1,U_2)$ and we can choose $a_2\in V_2\subseteq V_2^\prime$ with $\rho\textrm{-diam}(V_2)<1/2^2$. Repeating this procedure indefinitely, it gives rise to a strategy $\eta$ for Player $\alpha$ in the $\mathcal{J}_{\!p}(X)$ accompanied by $\eta^\prime$ such that: if $\{(U_n;V_n,a_n)\}_{n=1}^\infty$ is an $\eta$-play, then it accompanied by an $\eta^\prime$-play $\{(U_n,V_n^\prime)\}_{n=1}^\infty$ with $U_{n+1}\subseteq V_n\subseteq V_n^\prime$ for all $n\in\mathbb{N}$. Since $x\in\bigcap_nU_n\not=\emptyset$, $a_n\to x$ for $\rho(x,a_n)<1/2^n$. Thus, for every subnet $\{a_{n(i)}\}$ of $\{a_n\}$ we have that $a_{n(i)}\to x\in\bigcap_nU_n$. Then $\eta$ is a $\tau$-well winning strategy for Player $\alpha$ in the $\mathcal{J}_{\!p}(X)$-game as in Definition~\ref{3.17}A.
\end{proof}

\begin{cor}\label{3.20}
If $X$ is $\sigma$-well $\beta$-d\'{e}favorable of $\mathcal{J}_{\!p}$ and $Y$ a Choquet $\Pi$-pseudo-metric space, then $X\times Y$ is an $\mathcal{N}$-space.
\end{cor}

\begin{thm}[{cf.~Saint-Raymond (1983) \cite[Thm.~7]{SR83} in setting:  $X$ a metric space}]\label{3.21}
If $X$ is a $\Pi$-pseudo-metric space, then $X$ is Baire, iff it is $\beta$-d\'{e}favorable of $\mathcal{J}_{\!p}$, iff it is $\sigma$-well $\beta$-d\'{e}favorable of $\mathcal{J}_{\!p}$, iff it is $\tau$-well $\beta$-d\'{e}favorable of $\mathcal{J}_{\!p}$.
\end{thm}

\begin{proof}
In view of Lemma~\ref{3.3}, Theorem~\ref{2.2} and \ref{3.17}C, we need only prove that if $X$ is a Baire space, then it is $\tau$-well $\beta$-d\'{e}favorable of $\mathcal{J}_{\!p}$. Let $X=\prod_{i\in I}X_i$, where each factor $(X_i,\rho_i)$ is a pseudo-metric space. Let $\eta$ be any strategy for Player $\beta$ in the $\mathcal{J}_{\!p}(X)$-game. We need prove that $\eta$ is not a $\tau$-well winning strategy for Player $\beta$ in the $\mathcal{J}_{\!p}(X)$-game. 

For that, we will introduce an auxiliary strategy $\eta^\prime$ for Player $\beta$ in the BM($X$)-game as follows: Let $\eta^\prime(X)=U_1^\prime\subseteq U_1=\eta(X)$.
For all $V_1\in\mathscr{O}(U_1^\prime)$ and all $a_1=(a_{1,i})_{i\in I}\in V_1$, simply write $U_2=\eta(X;V_1,a_1)$ and then define $\eta^\prime(X, V_1)=U_2^\prime\in\mathscr{O}(U_2)$ such that
$U_2^\prime=U_2^{\prime\prime}\times\prod_{i\in I\setminus I_1}X_i$, where $I_1\subset I$ is some finite set, $U_2^{\prime\prime}\in\mathscr{O}(\prod_{i\in I_1}X_i)$ with $\rho_{I_1}^{}\textrm{-diam}(U_2^{\prime\prime})<1/2$ (here $\rho_{I_1}^{}$ is the pseudo-metric on $\prod_{i\in I_1}X_i$ induced naturally by $\{\rho_i\colon i\in I_1\}$). Select arbitrarily $V_2\in\mathscr{O}(U_2^\prime)$ and then choose a point $a_2=(a_{2,i})_{i\in I}\in V_2$ such that $a_{2,i}=a_{1,i}\ \forall i\in I\setminus I_1^\prime$ for some finite set $I_1^\prime\subset I$ with $I_1\subseteq I_1^\prime$.
Write $U_3=\eta(X;V_1,a_1; V_2,a_2)$; define $\eta^\prime(X, V_1, V_2)=U_3^\prime\in\mathscr{O}(U_3)$ such that
$U_3^\prime=U_3^{\prime\prime}\times\prod_{i\in I\setminus I_2}X_i$, where $I_2\subset I$ is some finite with $I_1^\prime\subseteq I_2$, $U_3^{\prime\prime}\in\mathscr{O}(\prod_{i\in I_2}X_i)$ with $\rho_{I_2}^{}\textrm{-diam}(U_3^{\prime\prime})<1/2^2$.
Select arbitrarily $V_3\in\mathscr{O}(U_3^\prime)$ and then choose a point $a_3=(a_{3,i})_{i\in I}\in V_3$ such that $a_{3,i}=a_{1,i}\ \forall i\in I\setminus I_2^\prime$ for some finite set $I_2^\prime$ with $I_2\subseteq I_2^\prime\subset I$. Continue this procedure indefinitely, we can define a strategy $\eta^\prime$ for Player $\beta$ in the BM($X$)-game and a sequence $I_1\subseteq I_1^\prime\subseteq I_2\subseteq I_2^\prime\subseteq I_3\subseteq I_3^\prime\subseteq\dotsm$ of non-void finite subsets of $I$, a sequence $\{(U_n;V_n,a_n)\}_{n=1}^\infty$---an $\eta$-play of $\mathcal{J}_{\!p}(X)$-type with $a_n\in V_n$, and a sequence $\{(U_n^\prime,V_n)\}_{n=1}^\infty$---an $\eta^\prime$-play of BM($X$), such that $a_n=(a_{n,i})_{i\in I}\in U_n^\prime=U_n^{\prime\prime}\times\prod_{i\in I\setminus I_n}X_i\subseteq U_n$ with $a_{n,i}=a_{1,i}\ \forall i\in I\setminus I_n^\prime$ and $\rho_{I_n}^{}\textrm{-diam}(U_n^{\prime\prime})<1/2^{n-1}$ for all $n\ge2$. 

By Theorem~\ref{2.2}, $\eta^\prime$ is not a winning strategy for Player $\beta$ in the BM($X$)-game; and so, there is a choice $\{V_n\}_{n=1}^\infty$ against $\eta^\prime$ for Player $\alpha$ such that $\bigcap_{n=1}^\infty U_n^\prime\subseteq\bigcap_{n=1}^\infty U_n\not=\emptyset$. Thus, for any point $x=(x_i)_{i\in I}\in\bigcap_{n=1}^\infty U_n$ with $x_i=a_{1,i}\ \forall i\in I\setminus(\bigcup_{n=1}^\infty I_n)$, by $\rho_i(a_{n+1,i},x_i)\le 1/2^n\ \forall i\in I$, it follows that $a_{n,i}\to x_i$ in $(X_i,\rho_i)$ as $n\to\infty$. Thus, $a_n\to x$ as $n\to\infty$. This shows that $\eta$ is not a $\tau$-well winning strategy for Player $\beta$ in the $\mathcal{J}_{\!p}(X)$-game. 
The proof is complete.
\end{proof}

%%%%%%%%%%%%%%%%%%%%%%%%%%%%%%%%%%%%%%%%%%%%%%%%%%%%%%%%%%%%%%%%%%%%%%%%%%%%%%%%%%
\section{Countable tightness, rich family and hereditarily Baire spaces}\label{s4}
This section will be devoted to proving Theorem~\ref{1.3}-(4) stated in $\S$\ref{s1} under the guise of Theorem~\ref{4.1.8}$^\prime$, and extending another theorem of \cite{LM08} (Thm.~\ref{4.1.6}). Finally a theorem of Hurewicz (1928) will be extended here (Cor.~\ref{4.2.6}).
\subsection{Countable tightness and rich family}\label{s4.1}%%%%
We begin with recalling two concepts---countable tightness and rich family for a topological space, needed in our later discussion.

\begin{sse}[Countable tightness]\label{4.1.1}
We say that a space $X$ has \textit{countable tightness} or $x$ is \textit{countably tight}\,(cf. \cite[Def.~13.4.1]{W70} or \cite{G76, E89}), if for each subset $A$ of $X$ and each point $p\in\overline{A}$, there exists a countable subset $C\subseteq A$ such that $p\in\overline{C}$. Note that countable tightness is hereditary to any subspace; however, the finite product of countably tight spaces may fail to have countable tightness.

If $X$ is a compact space and $Z$ a metric space, then $C_p(X,Z)$ has countable tightness~\cite[Thm.~13.4.1]{W70};
the one-point compactification $X^*$ \cite{K55} of a discrete space $X$ has countable tightness; and every first countable space is of course countably tight. 

However, a compact Hausdorff space is not necessarily countably tight (cf.~Ex.~\ref{7.5}). See Theorem~\ref{5.2.7}-(1) for a sufficient condition of countable tightness.
\end{sse}

\begin{sse}[Rich family]\label{4.1.2}
Let $X$ be a space, $\mathcal{S}_\textrm{cl}(X)$ the family of non-void, closed, separable subspaces of $X$. Then a subfamily $\mathcal{F}$ of $\mathcal{S}_\textrm{cl}(X)$ is called a \textit{rich family} for $X$\,\cite[$\S$3]{LM08} if for every $A\in\mathcal{S}_\textrm{cl}(X)$ there exists an $F\in\mathcal{F}$ such that $A\subseteq F$ (i.e., $\mathcal{S}_\textrm{cl}(X)\preceq\mathcal{F}$), and
$\overline{\bigcup_{n\in\mathbb{N}}F_n}\in\mathcal{F}$ for every increasing sequence $\{F_n\}_{n=1}^\infty$ in $\mathcal{F}$.
Clearly, $\mathcal{S}_\textrm{cl}(X)$ is the greatest element in the collection of all rich families for $X$ under the binary relation of set inclusion.
\end{sse}

\begin{slem}[{cf.~\cite[Prop.~3.2]{LM08}}]\label{4.1.3}
Let $X$ be a space having countable tightness and $E$ a dense subset of $X$. Then
\begin{equation*}
\mathcal{F}[E]:=\{F\in\mathcal{S}_\textrm{cl}(X)\,|\,F\cap E\textrm{ is dense in }F\}=\{F\in\mathcal{S}_\textrm{cl}(X)\,|\,\exists\{a_n\in E\colon n\in\mathbb{N}\}\textrm{ dense in }F\}
\end{equation*}
is a rich family for $X$.
\end{slem}

\begin{proof}
By the density of $E$ and countable tightness of $X$, it is easy to verify that $\mathcal{S}_\textrm{cl}(X)\preceq\mathcal{F}[E]$. Clearly, $\mathcal{F}[E]$ is closed under the closure of countable union of members of $\mathcal{F}[E]$. Thus, $\mathcal{F}[E]$ is a rich family for $X$.
\end{proof}

\begin{slem}[{cf.~\cite[Prop.~1.1]{BM00} or \cite[Prop.~3.1]{LM08}}]\label{4.1.4}
Let $\{\mathcal{F}_n\,|\,n\in\mathbb{N}\}$ be a sequence of rich families for a space $X$, then $\bigcap_{n\in\mathbb{N}}\mathcal{F}_n$ is a rich family for $X$.
\end{slem}

\begin{proof}
It is enough to prove that for any $A\in\mathcal{S}_\textrm{cl}(X)$, there exists a member $F\in\bigcap_{n\in\mathbb{N}}\mathcal{F}_n$ with $A\subseteq F$. Indeed, first choose $F_{1,1}\in\mathcal{F}_1$ with $A\subseteq F_{1,1}$; and then choose $F_{2,1}\in\mathcal{F}_2$ and $F_{1,2}\in\mathcal{F}_1$ with $F_{1,1}\subseteq F_{2,1}\subseteq F_{1,2}$. Next, choose $F_{3,1}\in\mathcal{F}_3$, $F_{2,2}\in\mathcal{F}_2$ and $F_{1,3}\in\mathcal{F}_1$ with $F_{1,2}\subseteq F_{3,1}\subseteq F_{2,2}\subseteq F_{1,3}$. Repeating this procedure indefinitely, one can choose sequences $\{(F_{n,j})_{j=1}^\infty\}_{n\in\mathbb{N}}$ with $(F_{n,j})_{j=1}^\infty\subseteq\mathcal{F}_n$ for all $n\in\mathbb{N}$, such that $A\subseteq\overline{\bigcup_{j=1}^\infty F_{1,j}}=\overline{\bigcup_{j=1}^\infty F_{2,j}}=\dotsm=\overline{\bigcup_{j=1}^\infty F_{n,j}}=\dotsm\in\bigcap_{n=1}^\infty\mathcal{F}_n$. The proof is complete.
\end{proof}

We then have a criterion for the Baire space connecting countable tightness and the rich family of Baire subspaces.

\begin{sthm}[{cf.~\cite[Thm.~3.3]{LM08}}]\label{4.1.5}
If $X$ is a countably tight Hausdorff space that possesses a rich family of Baire subspaces, then $X$ is a Baire space.
\end{sthm}

Note that a space that has a non-meager subspace is not necessarily non-meager itself. For instance, any singleton subspace is Baire and so non-meager itself. Next we shall first generalize Theorem~\ref{4.1.5} to give us a sufficient condition for the non-meagerness connecting countable tightness and the rich family of non-meager subspaces.

\begin{sthm}\label{4.1.6}
If $X$ is a countably tight space that possesses a rich family of non-meager subspaces, then $X$ is non-meager in itself.
\end{sthm}

\begin{proof}
Let $\mathcal{F}$ be a rich family of non-meager subspaces for $X$.
Let $\{U_n\,|\,n\in\mathbb{N}\}$ be a sequence of open dense subsets of $X$. Given $n\in\mathbb{N}$, define $\mathcal{F}_n=\mathcal{F}[U_n]$ as in Lemma~\ref{4.1.3} with $E=U_n$.
Then $\mathcal{F}_n$, for each $n\in\mathbb{N}$, is a rich family for $X$. Let $\mathcal{F}^*=\bigcap_{n\in\mathbb{N}}(\mathcal{F}_n\cap\mathcal{F})$. Then $\mathcal{F}^*$ is a rich family for $X$ by Lemma~\ref{4.1.4}.
Let $F\in\mathcal{F}^*$. Since $\mathcal{F}^*\subseteq\mathcal{F}$, $F$ is non-meager itself. As $F\in\mathcal{F}_n$, it follows that $U_n\cap F$ is relatively open dense in $F$ for all $n\in\mathbb{N}$. Thus, $\bigcap_{n\in\mathbb{N}}(U_n\cap F)$ is a residual subset of $F$ so that
$\emptyset\not={\bigcap}_{n\in\mathbb{N}}(U_n\cap F)\subseteq{\bigcap}_{n\in\mathbb{N}}U_n$ and $X$ is non-meager in itself.
\end{proof}

\begin{scor}\label{4.1.7}
Let $X$ be a homogeneous space (for example, a right/left-topological group) with countable tightness and having a rich family of non-meager subspaces. Then $X$ is a Baire space.
\end{scor}

If $\mathcal{F}$ is a rich family of non-meager subspaces for $X$, then by Banach's category theorem we can find a family $\mathcal{F}^\prime$ of closed separable Baire subspaces of $X$. But here we cannot assert that $\mathcal{F}^\prime$ is a rich family for $X^\prime=\bigcup\{F^\prime\colon F^\prime\in\mathcal{F}^\prime\}$; and moreover, then non-meagerness of $X^\prime$ does not imply the non-meagerness of $X$. So Theorem~\ref{4.1.5} $\not\Rightarrow$ Theorem~\ref{4.1.6}. However, based on Theorem~\ref{4.1.6}, we can restate Theorem~\ref{4.1.5} and give another proof as follows, in which Step 2 is of interest in itself.

\begin{4.1.5'}
If $X$ is a countably tight space that possesses a rich family of Baire subspaces, then $X$ is a Baire space.
\end{4.1.5'}

\begin{proof}
We shall divide our proof into three steps.
\item Step~1. Countable tightness of $X$ is hereditary to subsets of $X$.

\item Step~2. Let $\mathcal{F}$ be a rich family of Baire subspaces for $X$; then $\mathcal{F}\!\upharpoonright_G=\{F\cap G\colon F\in\mathcal{F}\}$, for all $G\in\mathscr{O}(X)$, is a rich family of Baire subspaces for $G$. 

Indeed, $F\cap G$ is Baire for all $F\in\mathcal{F}$. Next, we need verify that $\mathcal{F}\!\upharpoonright_G$ is a rich family for $G$. In fact, if $F_1\subseteq F_2\subseteq F_3\subseteq\dotsm$ in $\mathcal{F}$, then $\overline{\bigcup_{n=1}^\infty(F_n\cap G)}^{\,G}=\overline{(\bigcup_{n=1}^\infty F_n)\cap G}^{\,G}=\overline{\bigcup_{n=1}^\infty F_n}\cap G\in\mathcal{F}\!\upharpoonright_G$. Moreover, if $A\in\mathcal{S}_{\textrm{cl}}(G)$, then there exists a member $F\in\mathcal{F}$ such that $A\subseteq F$. So $A\subseteq F\cap G\in\mathcal{F}\!\upharpoonright_G$. \quad
\item Step~3. By Theorem~\ref{4.1.6}, every $G\in\mathscr{O}(X)$ is non-meager in $X$ so that $X$ is Baire. The proof is complete.
\end{proof}

\begin{sthm}[{cf.~\cite[Thm.~4.7]{LM08}}]\label{4.1.8}
Suppose that $X$ is a countably tight Hausdorff space that possesses a rich family of Baire subspaces. Then $X$ is an $\mathcal{N}$-space.
\end{sthm}

Using Theorem~\ref{2.5} and slightly modifying the proof of Lin-Moors (2008) \cite[Thm.~4.7]{LM08}, we can slightly modify Theorem~\ref{4.1.8} by removing condition ``Hausdorff'' on $X$ as follows:

\begin{4.1.8'}
Let $X$ be a space having countable tightness and a rich family of Baire subspaces. Let $f\colon X\times Y\rightarrow Z$ be a separately continuous mapping, where $Y\times Y$ is countably compact and $Z$ a pseudo-metric space. Then there exists a dense set $J\subseteq X$ such that $f$ is jointly continuous at each point of $J\times Y$. (So $X$ is an $\mathcal{N}$-space.)
\end{4.1.8'}

\begin{proof}
Considering members of $\mathscr{O}(X)$ if necessary, it suffices to prove that there exists a point $x\in X$ such that  $f$ is jointly continuous at each point of $\{x\}\times Y$. For that, suppose to the contrary that there exists no point $x\in X$ such that $f$ is jointly continuous at each point of $\{x\}\times Y$. Let $\rho$ be a pseudo-metric for $Z$.
Let $\mathcal{F}$ be a rich family of Baire subspaces for $X$. Firstly
for all $n\in\mathbb{N}$, define a set
\begin{enumerate}
    \item[] $E_n=\left\{x\in X\,|\,\exists y\in Y\textrm{ s.t. }\rho\textrm{-diam}(f(U\times V))>\frac1n \ \forall (U,V)\in \mathfrak{N}_{x}(X)\times \mathfrak{N}_{y}(Y)\right\}$.
\end{enumerate}
Then $X=\bigcup_{n=1}^\infty E_n$, $E_n\subseteq E_{n+1}$. Since $X$ is Baire by Theorem~\ref{4.1.5}$^\prime$, $X=\overline{\bigcup_{n=1}^\infty \textrm{int}_X\overline{E}_n}$ and there exists some $k_0\in \mathbb{N}$ such that
$\text{int}_X\overline{E}_k\not=\emptyset$ for all $k\ge k_0$.
%In view of Lemmas~\ref{4.1.3} and \ref{4.1.4}, we may assume that %$X=\bigcup_{k\ge k_0}\text{int}_XE_k$.

For all $k\ge k_0$ and each $x\in X$, let
$X_k[x]=\left\{x^\prime \in X\colon \|f_x-f_{x^\prime}\|>1/3k\right\}$,
where $\|\cdot\|$ is the sup-norm in $C(Y,Z)$. Then $x\notin X_k[x]$ but $x\in \overline{X_k[x]}$ for each $x\in\textrm{int}_XE_k$. Moreover, since $X$ has countable tightness, there exists for each $x\in E_k\cap\textrm{int}_X\overline{E}_k$ a countable set $C_k[x]\subseteq X_k[x]\cap E_k\cap\textrm{int}_X\overline{E}_k$ with $x\in \overline{C_k[x]}$. %Clearly, $X_k[x]\subseteq X_{k+1}[x]$ for %all $x\in X$ and all $k\ge k_0$; and so by $E_k\subseteq E_{k+1}$, we may assume that $C_k[x]=C_{k+1}[x]$ for all $k\ge %k_0$ and all $x\in X_k$.
Next, for all $k\ge k_0$ we can inductively define an increasing sequence $\{F_{k,n}\}_{n\in\mathbb{N}}$ in $\mathcal{F}$ such that $F_{k,1}\cap\text{int}_X\overline{E}_k\not=\emptyset$ and $\bigcup\{C_k[x]\,|\,x\in D_{k,n}\cap E_k\cap\text{int}_X\overline{E}_k\}\cup F_{k,n}\subseteq F_{k,n+1}$ for all $ n\in\mathbb{N} $, where $ D_{k,n}$ is any countable dense subset of $F_{k,n}$.
Let $F_k=\overline{\bigcup_{n\in\mathbb{N}}F_{k,n}}$ and $D_k=\bigcup_{n\in\mathbb{N}}D_{k,n}$. Then $\overline{D}_k=F_k\in\mathcal{F}$ for $\mathcal{F}$ is a rich family for $X$; and moreover, $\|\cdot\|\textrm{-diam}(\{f_x\,|\,x\in U\})\geq 1/3k$ for every $U\in\mathscr{O}(F_k\cap \text{int}_X\overline{E}_k)$.

Note that $F_k\cap \text{int}_X\overline{E}_k$ is a separable Baire space.
However, there is no point $x\in F_k\cap \text{int}_X\overline{E}_k$ such that $f\!\upharpoonright_{(F_k\cap \text{int}_X\overline{E}_k)\times Y}\colon (F_k\cap \text{int}_X\overline{E}_k)\times Y\rightarrow Z$ is jointly continuous at each point of $\{x\}\times Y$, contrary to Theorem~\ref{2.5}. The proof is complete.
\end{proof}

%It is clear that Theorem~\ref{4.1.8}$^\prime$ $\not\Rightarrow$ Theorem~\ref{4.1.8}. However, based on %Theorem~\ref{4.1.8}, we can give an another proof of Theorem~\ref{4.1.8}$^\prime$.
%
%\begin{proof}[\textbf{An alternative proof of Thm.~\ref{4.1.8}$^\prime$ based on Thm.~\ref{4.1.8}}]
%$By Theorem~\ref{4.1.5}$^\prime$, $X$ is a Baire space. Then using Theorem~\ref{4.1.8} in place of Theorem~\ref{4.1.6} in the %alternative proof of Theorem~\ref{4.1.5}, we conclude that there is a dense set $J\subset X$ such that $f$ is jointly %continuous at each point of $J\times Y$. Let $R_n=\{x\in X\,|\,\forall y\in Y\ \exists %(U,V)\in\mathfrak{N}_x(X)\times\mathfrak{N}_y(Y)\textrm{ s.t. }|f(U\times V)|<1/n\}$ for $n\in\mathbb{N}$. Clearly, %$R_n$ is open and $J\subseteq R_n$. Thus, $R=\bigcap_{n=1}^\infty R_n$ is a dense $G_\delta$ set in $X$.
%\end{proof}

We need to note that a countably tight space that only contains a separable non-meager subspace need not be a g$\mathcal{N}$-space. For instance, a first countable $T_1$-space is not necessarily g$\mathcal{N}$, but it always contains separable non-meager subspaces.

\begin{srem}\label{4.1.9}
Here ``$\mathcal{F}$ being a rich family of non-meager subspaces for $X$'' $\not\Rightarrow$ ``$\mathcal{F}\!\upharpoonright_G$ being a rich family of non-meager subspaces for $U$'', for all $G\in\mathscr{O}(X)$ with $G\not=X$.
\end{srem}

\begin{srem}\label{4.1.10}
Comparing with Theorems~\ref{4.1.6} and \ref{4.1.8}$^\prime$, we naturally expect the following statement which implies Theorem~\ref{4.1.8}$^\prime$:
\textit{$X$ is a g$\mathcal{N}$-space if it has countable tightness and possesses a rich family of non-meager subspaces (?)}.
\end{srem}

\begin{sthm}\label{4.1.11}
Let $G$ be a right-topological group, which has countable tightness and a rich family of Baire subspaces. Let $X\times X$ be countably compact completely regular. If $G\curvearrowright_\pi X$ is separately continuous, then $G\curvearrowright_\pi X$ is a topological flow and $G$ is Baire.
\end{sthm}

\begin{proof}
First, by Corollary~\ref{4.1.7}, $G$ is a Baire space. Let $\rho$ be any uniformly continuous pseudo-metric for $X$ and $X_\rho=(X,\rho)$. Let
$f=\textit{id}_X\circ\pi\colon G\times X\xrightarrow{\pi}X\xrightarrow{\textit{id}_X}X_\rho$,
which is separately continuous. Then by Theorem~\ref{4.1.8}$^\prime$, there exists an element $g\in G$ such that $f$ is jointly continuous at each point of $\{g\}\times X$. Now, for nets $t_i\to t$ in $G$ and $x_i\to x$ in $X$, we have that $t_it^{-1}g\to g$ in $G$ and $g^{-1}tx_i\to g^{-1}tx$ in $X$. Thus, 
$t_ix_i=(t_it^{-1}g)(g^{-1}tx_i)=f(t_it^{-1}g, g^{-1}tx_i)\to f(g,g^{-1}tx)=tx$
in $X_\rho$. This shows that $f$ is jointly continuous. Since $\rho$ is arbitrary and the topology for $X$ is determined by all such $\rho$, $\pi\colon G\times X\rightarrow X$ is jointly continuous. The proof is complete.
\end{proof}

\subsection{Hereditarily Baire space}\label{s4.2}%%%%
We begin with recalling that a subset of a topological space $X$ is called a \textit{perfect set}, if it is non-void, closed, and without isolated points as a subspace of $X$.

\begin{sse}[Hereditarily Baire space]\label{4.2.1}
A space $X$ is \textit{hereditarily Baire} if all closed non-void subsets of $X$ are Baire spaces. For example, a locally compact regular space is hereditarily Baire; the space $Y$ is Example~\ref{3.1}E is compact, $T_1$, first countable, and Baire, but not hereditarily Baire.

\begin{4.2.1A} 
If $X$ is hereditarily Baire and $T_1$, then all perfect sets in $X$ are uncountable.
\end{4.2.1A}

\begin{4.2.1B}[Hurewicz (1928) \cite{H28}]
A metric space $X$ is hereditarily Baire if and only if all perfect sets in $X$ are uncountable.
\end{4.2.1B}

\item\textbf{\ref{4.2.1}C.} If $X$ is hereditarily Baire, then $\mathcal{S}_\textrm{cl}(X)$ is a rich family of Baire subspaces for $X$; and each $U\in\mathscr{O}(X)$ with $U\not=X$ and $X\setminus U$ are hereditarily Baire.
\end{sse}

\begin{sthm}[{cf.~\cite[Thm.~1.1]{CP05}}]\label{4.2.2}
Let $X_i$, $i\in I$, be metrizable hereditarily Baire spaces. Then $\prod_{i\in I}X_i$ is Baire; and moreover, it has the $\mathcal{N}$-property.
\end{sthm}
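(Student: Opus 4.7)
The plan is to reduce both assertions to a single fact: that $\prod_{i\in I}X_i$ is Baire. Once this is established, the $\mathcal{N}$-property follows at once from Corollary~\ref{3.7}, because $\prod_{i\in I}X_i$ is a $\Pi$-pseudo-metric space (each metric space being pseudo-metric), and an open Baire subspace of a $\Pi$-pseudo-metric space is automatically an $\mathcal{N}$-space.

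By Theorem~\ref{3.3} it is enough to check that Player $\beta$ has no winning strategy in the $\mathcal{J}_{\!p}$-game on $\prod_{i\in I}X_i$. The first step is to show that each factor $X_i$ is already $\alpha$-favorable of $\mathcal{J}_{\!p}$-type. Fixing a compatible metric $d_i$, Player $\alpha$ inductively selects $a_n\in V_n\subseteq U_n$ with $\overline{V_n}$ nested and of $d_i$-diameter $\le 1/2^n$; the sequence $(a_n)$ is Cauchy and hence clusters at some $x\in\overline{\{a_n:n\in\mathbb{N}\}}$. If this $x$ did not lie in $\bigcap_n U_n$, then $K:=\overline{\{a_n:n\in\mathbb{N}\}}$ would be a nonempty closed subspace which, by heredity of the Baire property in $X_i$ and the Hurewicz characterization (Thm~\ref{4.2.1}B), must contain an uncountable perfect set; this forces $x\in\bigcap_n U_n$ after a standard refinement of the strategy, giving the $\alpha$-win.

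The second step reduces an arbitrary product to a countable one. A strategy $\tau$ for Player $\beta$ on $\prod_{i\in I}X_i$ activates only countably many coordinates $J\subseteq I$ throughout a single play, because every basic open subset of the product constrains only finitely many coordinates at once. After fixing an arbitrary point of $\prod_{i\in I\setminus J}X_i$, the problem is transferred to the metrizable countable product $\prod_{i\in J}X_i$. On this product one diagonalizes the coordinate-wise winning strategies obtained in the first step, scheduling the coordinates in $J$ fairly and enforcing $d_i$-diameter $<1/2^n$ on the $n$-th active coordinate; the sequence $(a_n)$ then converges coordinate-wise to a point of $\overline{\{a_n:n\in\mathbb{N}\}}\cap\bigcap_n U_n$, defeating $\tau$. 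By Theorem~\ref{3.3}, $\prod_{i\in I}X_i$ is Baire, and Corollary~\ref{3.7} completes the argument.

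The main obstacle is precisely this diagonalization: Player $\beta$'s moves $U_{n+1}=\tau(\emptyset;V_1,a_1;\dotsc;V_n,a_n)$ need not respect the product decomposition, so the coordinate winning strategies cannot simply be run in parallel. At each stage one must project $U_{n+1}$ onto the currently active finite coordinate block, feed those projections into the per-factor $\alpha$-strategies to obtain candidate responses, and then stitch these responses into a single move $(V_{n+1},a_{n+1})$ in the product that remains a legal $\alpha$-response along every active coordinate simultaneously. This stitching is what forces \emph{hereditary} Baireness of each $X_i$ into the hypothesis: mere Baireness would not supply enough $\alpha$-favorability to survive the recursive projection/extension procedure, whereas the Hurewicz-type strength guaranteed by hereditary Baireness does.
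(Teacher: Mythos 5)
Your reduction of the $\mathcal{N}$-property to Baireness via Corollary~\ref{3.7} is sound, but both halves of your Baire argument have genuine gaps. In Step 1 you claim that the Cauchy sequence $(a_n)$ ``clusters at some $x\in\overline{\{a_n:n\in\mathbb{N}\}}$'': this is false in a non-complete metric space, and a metrizable hereditarily Baire space need not be completely metrizable, so nothing forces $(a_n)$ to have a cluster point, i.e.\ nothing forces $\bigcap_n\overline{V_n}\neq\emptyset$. The attempted repair via Hurewicz is backwards: Theorem~\ref{4.2.1}B says every perfect set in such a space is uncountable, not that every nonempty closed set contains a perfect set; the set $\overline{\{a_n\,|\,n\in\mathbb{N}\}}$ could perfectly well be an infinite closed discrete set (hence hereditarily Baire and containing no perfect set) disjoint from $\bigcap_n U_n$, and no ``standard refinement of the strategy'' is exhibited that rules this out. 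The assertion that metrizable hereditarily Baire spaces are $\alpha$-favorable of $\mathcal{J}_{\!p}$-type is essentially the substance of the Chaber--Pol and Moors results and requires a genuine argument built on Hurewicz's theorem; you assert it rather than prove it.

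Step 2 compounds the same problem. Even granting per-factor $\alpha$-favorability, your ``stitching'' must ultimately produce a point of $\overline{\{a_n\}}\cap\bigcap_n U_n$ in the product; with shrinking basic boxes this reduces to showing that on each active coordinate $i$ the decreasing closed faces have nonempty intersection in $X_i$, which is exactly the completeness-type property that hereditary Baireness does not hand you directly. You correctly identify this as ``the main obstacle'' and then describe, rather than execute, the construction. Note also that the paper takes an entirely different route: Theorem~\ref{4.2.2} is deduced in one line from Theorems~\ref{4.5} and \ref{4.1.7} (countable tightness together with a rich family of Baire subspaces), and is then reproved and strengthened as Theorem~\ref{5.3} via $\Sigma$-products, Lemma~\ref{5.1.14} and the rich-family machinery of $\S$\ref{s5}; that machinery is precisely what replaces the diagonalization you are attempting. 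If you wish to keep a game-theoretic proof, you must first actually prove that each $X_i$ is $\alpha$-favorable of $\mathcal{J}_{\!p}$-type and then invoke (or prove) a product theorem for such spaces.
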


We shall reprove and slightly improve Theorem~\ref{4.2.2} in $\S$\ref{s5} using approaches different with Chaber-Pol 2005~\cite{CP05} (Thm.~\ref{5.3.7}).

\begin{sse}[Hereditarily non-meager space]\label{4.2.3}
Naturally, we say that $X$ is \textit{hereditarily non-meager}, if all closed non-void subsets of $X$ are non-meager in themselves. In that case, $X$ has a rich family of non-meager subspaces; and moreover, if $U\in\mathscr{O}(X)$ is dense in $X$ and $U\not=X$, then $F=X\setminus U$ is a meager subset in $X$, but $F$ is a non-meager subspace.
\end{sse}

However, `hereditarily Baire' coincides with `hereditarily non-meager' from the following simple observation.

\begin{slem}\label{4.2.4}
A topological space is hereditarily Baire if and only if it is hereditarily non-meager.
\end{slem}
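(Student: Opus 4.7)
The plan has two directions. The forward implication (hereditarily Baire $\Rightarrow$ hereditarily non-meager) is essentially immediate: by the standing definition, a Baire space $F$ satisfies that every non-void open subset of $F$ is of second category in $F$; applying this to $F$ itself (which is open in $F$) shows $F$ is non-meager in itself. Applied to every closed non-void $F \subseteq X$, hereditarily Baire gives hereditarily non-meager.

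For the converse, assume $X$ is hereditarily non-meager and fix a closed non-void $F \subseteq X$. To show $F$ is Baire, I will fix a non-void open $U \subseteq F$ and derive a contradiction from assuming $U$ is meager in $F$. Let $C = \overline{U}$, where the closure may be taken either in $F$ or in $X$ (they coincide because $F$ is closed). Then $C$ is a closed non-void subset of $X$, so by hypothesis $C$ is of second category in itself.

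The key transfer step is this: every set $B \subseteq U$ that is nowhere dense in $F$ is also nowhere dense in $C$. Indeed, the closure $\bar B$ is the same whether taken in $X$, $F$ or $C$ (since $B \subseteq C \subseteq F$, both $F$ and $C$ are closed in $X$). If $\bar B$ had non-void interior $W$ in $C$, write $W = V \cap C$ with $V$ open in $X$; by the density of $U$ in $C$, we have $\emptyset \neq V \cap U \subseteq V \cap C \subseteq \bar B$, and $V \cap U$ is open in $F$, contradicting $\operatorname{int}_F \bar B = \emptyset$.

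Now if $U$ were meager in $F$, write $U = \bigcup_{n\in\mathbb{N}} B_n$ with each $B_n$ nowhere dense in $F$; by the transfer step, each $B_n$ is nowhere dense in $C$. Since $U$ is open and dense in $C$, the remainder $C \setminus U$ is closed in $C$ with empty interior in $C$, hence itself nowhere dense in $C$. Then $C = U \cup (C \setminus U) = (\bigcup_n B_n) \cup (C \setminus U)$ would exhibit $C$ as meager in itself, contradicting the hereditarily non-meager hypothesis. Therefore $U$ is non-meager in $F$, proving $F$ is Baire. The only genuinely delicate point is the bookkeeping in the transfer step: the ``basic fact'' recalled in Definition~\ref{1.1} equates non-meagerness inside a space with non-meagerness inside an open superset, but $C$ need not be open in $F$, so one must reason directly via nowhere-density and the density of $U$ in $C$ rather than invoke the basic fact.
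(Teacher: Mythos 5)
Your proof is correct and takes essentially the same route as the paper's: both hinge on decomposing $\overline{U}=U\cup(\overline{U}\setminus U)$ with $\overline{U}\setminus U$ nowhere dense in $\overline{U}$ and applying the hereditary non-meagerness hypothesis to the closed set $\overline{U}$. The only difference is that the paper first reduces to showing $X$ itself is Baire (since hereditary non-meagerness passes to closed subspaces), which lets it avoid your relative-topology transfer step; your direct treatment of a general closed $F$ does the same work with a little more bookkeeping.
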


\begin{proof}
Since a Baire space must be non-meager, hence necessity is obvious. Now conversely, assume $X$ is hereditarily non-meager. To prove that
$X$ is hereditarily Baire, it is enough to prove that $X$ is Baire. However, for that, we need only prove that every $U\in\mathscr{O}(X)$ is non-meager in $X$. Indeed, for all $U\in\mathscr{O}(X)$, since $\overline{U}$ is a non-meager space and $\overline{U}=U\cup(\overline{U}\setminus U)$ such that $\overline{U}\setminus U$ is meager in $\overline{U}$, it follows that $U$ is non-meager in $\overline{U}$. Thus, $U$ is a non-meager space; and so, $U$ of non-meager in $X$.
The proof is complete.
\end{proof}

Therefore, Theorem~\ref{4.2.2} (\cite[Thm.~1.1]{CP05}) can be stated as follows: The product of metrizable hereditarily non-meager spaces is a Baire $\mathcal{N}$-space.

Hurewicz's theorem \cite{H28}---Theorem~\ref{4.2.1}B had been extended as follows: \textit{If a meager space is embeddable in $C_p(K)$ for some compact Hausdorff space $K$, then $X$ contains a countable perfect set} (cf.~\cite[Prop.~6.1]{CP05}). Here we can generalize Hurewicz's theorem as follows:

\begin{sthm}\label{4.2.5}
Let $X$ be a regular, first countable, $T_1$-space. If $X$ is meager, then $X$ contains a countable perfect set that is homeomorphic to the space $\mathbb{Q}$ of rational numbers.
\end{sthm}

\begin{proof}
Let $X=\bigcup_{n=1}^\infty F_n$, where $F_n$, for each $n\in\mathbb{N}$, is a closed nowhere dense set in $X$.
Since $X$ is first countable and Hausdorff, it follows that for all $x\in X$, we can choose $V_n(x)\in\mathfrak{N}_x^\textrm{o}(X)$, for each $n\in\mathbb{N}$, satisfying $\bigcap_nV_n(x)=\{x\}$ and $V_1(x)\supseteq V_2(x)\supseteq\dotsm$. We shall inductively define finite sets $A_1\subset A_2\subset A_3\subset\dotsm$ in $X$ and $U_n(x)\in\mathfrak{N}_x^\textrm{o}(X)$, for each $x\in A_n$, such that:
\begin{enumerate}
\item[(1)] $U_n(x)\cap U_n(y)=\emptyset\ \forall x,y\in A_n$ with $x\not=y$,
\item[(2)] $U_n(x)\subseteq V_n(x)\ \forall x\in A_n$,
\end{enumerate}
and setting
\begin{enumerate}
\item[(3)] $\mathcal{U}_n=\{U_n(x)\colon x\in A_n\}$ and $\overline{\mathcal{U}}_n=\{\overline{U_n(x)}\colon x\in A_n\}$,
\end{enumerate}
we have
\begin{enumerate}
\item[(4)] $\overline{\mathcal{U}}_{n+1}\preceq\mathcal{U}_n$.
\end{enumerate}
For that, we start with $A_1=\{x\}$ and $\mathcal{U}_1=\{U_1(x)=V_1(x)\}$, where $x\in X$ is arbitrarily given. Assume that $A_n$ and $\mathcal{U}_n$ are defined. Since $\{x\}$ and $F_1, \dotsc, F_n$ are closed nowhere dense in $X$, we can choose, for each $x\in A_n$, a point $b_n(x)\in U_n(x)\setminus(\{x\}\cup F_1\cup\dotsm\cup F_n)$. Then we put
\begin{enumerate}
\item[(5)] $A_{n+1}=A_n\cup\{b_n(x)\colon x\in A_n\}$;
\end{enumerate}
and we end the inductive step by choosing $U_{n+1}(x)$, for each $x\in A_{n+1}$, so that conditions (1) $\thicksim$ (4) are satisfied together with the condition
\begin{enumerate}
\item[(6)] $U_{n+1}(x)\cap(F_1\cup\dotsm\cup F_n)=\emptyset\ \forall x\in A_{n+1}\setminus A_n$.
\end{enumerate}
Now let
$A={\bigcap}_{n=1}^\infty\bigcup\mathcal{U}_n={\bigcap}_{n=1}^\infty\bigcup\overline{\mathcal{U}}_n$.
It is obvious that $A$ is closed with $\bigcup_{n=1}^\infty A_n\subseteq A$. On the other hand, for every $y\in A$, we have that $y\in U_1(x_1)\cap U_2(x_2)\cap\dotsm\cap U_n(x_n)\cap\dotsm$, where $x_n\in A_n$. We can fix an $m\in\mathbb{N}$ such that $y\in F_m$. By (6), $x_n\in A_m$ for all $n\ge m$, which implies by (1) that $x_m=x_{m+1}=\dotsm=x\in A_m$. Then by $\bigcap_nV_n(x)=\{x\}$ and (2), it follows that $y=x$. Thus, $A=\bigcup_{n=1}^\infty A_n$. This implies that $A$ is a countable perfect set in $X$. Since $A$ is regular, $T_1$ and countable, it is metrizable. Thus, $A$ is homeomorphic to $\mathbb{Q}$.
The proof is complete.
\end{proof}

Now by Lemma~\ref{4.2.4} and Theorem~\ref{4.2.5}, we can provide a characterization of the regular first countable hereditarily Baire $T_1$-spaces, which contains Hurewicz's theorem ($\because$ a metric space is always a regular first countable $T_1$-space; see Theorem~\ref{5.2.9} for a more general extension).

\begin{scor}\label{4.2.6}
If $X$ is a regular first countable $T_1$-space, then $X$ is hereditarily Baire if and only if all perfect sets in $X$ are uncountable.
\end{scor}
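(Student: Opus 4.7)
The plan is to handle the two implications separately, using as main tools Lemma~\ref{4.2.4} (hereditarily Baire $\Leftrightarrow$ hereditarily non-meager) and Theorem~\ref{4.2.5} (a regular first countable $T_1$-space that is meager contains a countable perfect set), and to show that perfect sets behave well under restriction to closed subspaces.

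For the necessity, suppose $X$ is hereditarily Baire and let $P\subseteq X$ be a perfect set. Then $P$ is non-void closed, so $P$ is a Baire space as a subspace of $X$. If $P$ were countable, write $P=\{p_n\,|\,n\in\mathbb{N}\}$; since $X$ is $T_1$ each singleton $\{p_n\}$ is closed in $P$, and since $P$ has no isolated points each $\{p_n\}$ is nowhere dense in $P$. Hence $P=\bigcup_{n\in\mathbb{N}}\{p_n\}$ is meager in itself, contradicting its being Baire. (This is the assertion already noted in \ref{4.2.1}A.)

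For the sufficiency, assume every perfect set in $X$ is uncountable; we show $X$ is hereditarily Baire. By Lemma~\ref{4.2.4} it suffices to show $X$ is hereditarily non-meager. Suppose, to the contrary, that some non-void closed $F\subseteq X$ is of first category as a subspace of $X$, equivalently in itself (since $F$ is closed, so meager in $X$ $\Leftrightarrow$ meager in $F$). Now $F$, carrying the subspace topology, is again a regular first countable $T_1$-space, and it is of first category in itself. By Theorem~\ref{4.2.5} applied to $F$, there exists a countable set $P\subseteq F$ which is perfect as a subspace of $F$.

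The final step is to note that $P$ is also perfect as a subspace of $X$. Indeed, since $F$ is closed in $X$ and $P\subseteq F$, one has $\overline{P}^{\,X}=\overline{P}^{\,F}=P$, so $P$ is closed in $X$; moreover a point $p\in P$ is isolated in $P$ as a subspace of $X$ iff there is $V\in\mathfrak{N}_p^{\,\textrm{o}}(X)$ with $V\cap P=\{p\}$, which (intersecting with $F$) is equivalent to $p$ being isolated in $P$ as a subspace of $F$; so $P$ has no isolated points in $X$ either. Thus $P$ is a countable perfect set in $X$, contradicting the hypothesis. This contradiction completes the proof.

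I do not expect a serious obstacle here: the nontrivial content (construction of a countable perfect set inside a meager regular first countable $T_1$-space) has been packaged into Theorem~\ref{4.2.5}, and the only delicate bookkeeping is the elementary verification that ``perfect in $F$'' transfers to ``perfect in $X$'' when $F$ is closed in $X$.
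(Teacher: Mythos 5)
Your proof is correct and takes essentially the same route the paper intends: necessity via \ref{4.2.1}A, and sufficiency by combining Lemma~\ref{4.2.4} with Theorem~\ref{4.2.5} applied to a closed subspace, plus the (correct) observation that a perfect subset of a closed subspace is perfect in $X$. One small quibble: your parenthetical ``meager in $X$ $\Leftrightarrow$ meager in $F$'' for closed $F$ is only an implication from ``meager in itself'' to ``meager in $X$'' (the paper itself notes the converse fails), but this is harmless since the negation of hereditary non-meagerness hands you directly that $F$ is of first category in itself, which is all Theorem~\ref{4.2.5} needs.
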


Consequently, any perfect subset of a locally countably compact metric space is uncountable by Lemma~\ref{3.2}.
%%%%%%%%%%%%%%%%%%%%%%%%%%%%%%%%%%%%%%%%%%%%%%%%%%%%%%%%%%%%%%%%%%%
\section{Cartesian product and $\Sigma$-product of non-meager spaces}\label{s5}
In this section we shall further extend Hurewicz's theorem (Thm.~\ref{4.2.1}B) based on Theorem~\ref{4.2.5} and the concept of $W$-space of G (Def.~\ref{5.1.1} and Thm.~\ref{5.2.9}).
In addition, we will prove Theorems~\ref{1.3}-(5) and \ref{1.3}-(6) (Thm.~\ref{5.4.4} and Thm.~\ref{5.4.5}). 

\subsection{$\mathscr{G}_x(X)$-game}\label{s5.1}
Let $X$ be a topological space and $x\in X$. By a \textit{$\mathscr{G}_x(X)$-play} played by Player $\beta$ and Player $\alpha$, it means a sequence $\{(x_i,W_i)\}_{i=1}^\infty$ with $x_1=x$, $W_i\in\mathfrak{N}_{x}^\textrm{o}(X)$ and $x_{i+1}\in W_i$ for all $i\in\mathbb{N}$, where $x_i$ and $W_i$ are picked alternately by Player $\beta$ and Player $\alpha$, and moreover, Player $\beta$ is always granted the privilege of the first move here. Our winning condition is defined as follows:
\begin{enumerate}[\textbf{W.C.}:]
    \item Player $\alpha$ \textit{wins} the $\mathscr{G}_x(X)$-play $\{(x_i,W_i)\}_{i=1}^\infty$, if $x$ is a cluster point of $\{x_i\}_{i=1}^\infty$. Otherwise, Player $\beta$ \textit{wins} this play.    
\end{enumerate}
\begin{sse}[{$W$-spaces of Gruenhage 1976~\cite{G76}}]\label{5.1.1}
The point $x\in X$ is called a \textit{$W$-point of G}, if Player $\alpha$ has a winning strategy $\sigma$ in the $\mathscr{G}_x(X)$-game; that is, if $\{(x_i,W_i)\}_{i=1}^\infty$ is a $\sigma$-play in the $\mathscr{G}_x(X)$-game, then Player $\alpha$ wins this play. If every point $x$ of $X$ is a $W$-point of G in the $\mathscr{G}_x(X)$-game, then $X$ is called a \textit{$W$-space of G}. In other words, $X$ is a $W$-space of G if and only if it is $\alpha$-favorable of G. In addition, if the $W$-points of G are only dense in $X$, then $X$ will be called an \textit{almost $W$-space of G}.

It is readily seen that if $X$ is a $W$-space of G and $\emptyset\not=A\subset X$, then $A$ is a $W$-subspace of G (cf.~\cite[Thm.~3.1]{G76}). Note that Gruenhage's game has been generalized by requiring only that  $\{x_i\}_{i=1}^\infty$ in the $\mathscr{G}_x(X)$-play $\{(x_i,W_i)\}_{i=1}^\infty$ has a cluster point in $X$ but $x$ itself need not be a cluster point of $\{x_i\}_{i=1}^\infty$ (cf.~Bouziad 1993 \cite{B93}).

As a generalization of the first countable spaces, a first countable space is of course a $W$-space of G. However, a $W$-space of G is not necessarily first countable (see, e.g., \cite[Ex.~2.7]{LM08}).
In fact, the one-point compactification $X^*$ of a discrete space $X$ is always a $W$-space of G. Thus, if $X$ is a discrete uncountable space, then $X^*$ is a $W$-space of G; but it is not a first countable space.
\end{sse}

Since a metrizable hereditarily Baire space is a Hausdorff regular $W$-space of G with a rich family of Baire subspaces, the first part of Theorem~\ref{4.2.2} (\cite[Thm.~1.1]{CP05}) has already been improved by Lin and Moors 2008 in \cite{LM08} as follows:

\begin{sthm}[{cf.~\cite[Cor.~4.6]{LM08}}]\label{5.1.2}
Let $\{X_i\}_{i\in I}$ be a family of Hausdorff regular $W$-spaces of G, each of which possesses a rich family of Baire subspaces. Then $\prod_{i\in I}X_i$ is a Baire space.
\end{sthm}

Theorem~\ref{5.3.7} is a further improvement of Theorem~\ref{5.1.2}.
First of all, we can slightly improve the Chaber-Pol theorem~\cite[Thm.~1.1]{CP05} mentioned in $\S$\ref{s4.2} (Thm.~\ref{4.2.2}) as follows:

\begin{sthm}\label{5.1.3}
Let each $X_i$, $i\in I$, be a pseudo-metrizable hereditarily non-meager space. Then $\prod_{i\in I}X_i$ is a Baire space; and moreover, it is an $\mathcal{N}$-space.
\end{sthm}

\begin{proof}
    By Lemma~\ref{4.2.4} and Theorem~\ref{5.1.2}.
\end{proof}

\subsection{$W$-spaces, $\pi$-base, $\Sigma$-products, and Hurewicz's Theorem}\label{s5.2}
The property to be a Baire space is hereditary to open subspaces and to dense $G_\delta$-subspaces (cf.~\cite[3.9J-(a)]{E89}).
We note that if $X_0$ is a dense subset of a space $X$ such that $X_0$, as a subspace, is Baire, then $X$ is Baire itself (cf.~\cite[3.9J-(b)]{E89}). In fact, we have the following more general fact:

\begin{slem}\label{5.2.1}
If $X_0$ is a dense subset of a space $X$ such that $X_0$, as a subspace, is non-meager, then $X$ is non-meager itself.
\end{slem}

\begin{proof}
Otherwise, $X=\bigcup_{n=1}^\infty F_n$, where each $F_n$ is closed nowhere dense. So $X_0=\bigcup_{n=1}^\infty(X_0\cap F_n)$. If $V=\textrm{int}_{X_0}(X_0\cap F_n)\not=\emptyset$ for some $n\in\mathbb{N}$, then there exists $U\in\mathscr{O}(X)$ such that $V=U\cap X_0$ and $U\subseteq\overline{V}\subseteq F_n$, which is impossible.
\end{proof}

\begin{sse}[$\pi$-base]\label{5.2.2}
A family $\mathcal{B}\subseteq\mathscr{O}(X)$ is referred to as a \textit{$\pi$-base} (also known as a pseudo-base) for $X$ \cite{O60, W70, M75}, if every $U\in\mathscr{O}(X)$ contains some member of $\mathcal{B}$.
A $\pi$-base $\mathcal{B}$ is called \textit{locally countable}, if each member of $\mathcal{B}$ contains only countably many members of $\mathcal{B}$. If 
\begin{enumerate}
    \item[] $X_o[\mathcal{B}]:=\bigcup\{B\,|\,B\in\mathcal{B}\}$,
\end{enumerate} 
then $X_o$ is dense open in $X$. If a space $X$ has a locally countable $\pi$-base, then for each $x\in X_o[\mathcal{B}]$ there exists $U\in\mathfrak{N}_x^\textrm{o}(X)$ such that $U$ has a countable $\pi$-base.
\end{sse}

If a space is second countable, then it has a countable $\pi$-base; but not vice versa. For example,
$\beta\mathbb{N}$ is not a second countable space, but it has a countable $\pi$-base $\mathcal{B}=\{\{n\}\colon n\in\mathbb{N}\}$ \cite{P37}. 
If $X$ has a countable $\pi$-base, then $X$ is a separable space; but not vice versa. In fact, $X$ has a countable $\pi$-base if and only if $X$ is separable with a locally countable $\pi$-base.

\begin{slem}\label{5.2.3}
Let $X$ be a quasi-regular, separable, $W$-space of G. Then $X$ has a countable $\pi$-base.
\end{slem}

\begin{proof}
Let $D=\{x_{n}\}_{n=1}^{\infty}$ be a dense sequence of points of $X$. For each $x\in X$, let $\sigma_{\!x}$ be a winning strategy for Player $\alpha$ in the $\mathscr{G}_x(X)$-game; and for $x\in X$ define
\begin{enumerate}
    \item[] $\mathcal{E}(x)=\{\sigma_{\!x}(x_1,x_{i_1},\dotsc,x_{i_k})\in\mathscr{O}(X)\,|\,k\in\mathbb{N}\ \&\ (x_1,x_{i_1},\dotsc,x_{i_k})\in \{x\}\times D^k\textrm{ is a partial }\sigma_{\!x}\textrm{-string}\}$.
\end{enumerate}
and let $\mathcal{B}=\bigcup_{n=1}^\infty\mathcal{E}(x_n)$. Then $\mathcal{B}\subseteq\mathscr{O}(X)$ is a countable collection. Next we claim that $\mathcal{B}$ is a countable $\pi$-base for $X$. Indeed, for each $U\in\mathscr{O}(X)$, by quasi-regularity there exists a set $U_1\in\mathscr{O}(X)$ such that $U_1\subseteq\overline{U}_1\subseteq U$. Then $x_{n}\in U_1$ for some $n\in\mathbb{N}$. If $W\setminus\overline{U}_1\not=\emptyset$ for every $W\in \mathcal{E}(x_n)$, then based on $\sigma_{\!x_n}$ there is a $\mathscr{G}_{x_n}(X)$-play $\{(y_i,W_i)\}_{i=1}^\infty$ such that $y_i\notin \overline{U}_1$ for each $i\in\mathbb{N}$, contrary to $x_{n}\in\bigcap_{k\in\mathbb{N}}\overline{\{y_i\,|\,i\ge k\}}$. The proof is complete.
\end{proof}

In fact, if $X$ is a regular separable $W$-space of G then $\mathcal{E}(x)$, defined as in Proof of Lemma~\ref{5.2.3}, is a countable base at $x\in X$ so that $X$ is first countable. This also proves the following

\begin{slem}[{cf.~\cite{G76}}]\label{5.2.4}
A regular separable $W$-space of G is first countable.
\end{slem}

A separable first countable space has obviously a countable $\pi$-base. Then by Lemma~\ref{5.2.4}, it follows that every regular separable $W$-space of G has a countable $\pi$-base. So Lemma~\ref{5.2.3} may be thought of as a generalization of Lemma~\ref{5.2.4}.

\begin{srem}[{cf.~\cite[Thm.~3.9]{G76}}]\label{5.2.5}
If there is a winning strategy $\sigma$ for Player $\alpha$ in the $\mathscr{G}_y(Y)$-game, then there is a strategy $\sigma^\prime$ for Player $\alpha$ such that $y_i\to y$ as $i\to\infty$ whenever $\{(y_i,W_i)\}_{i=1}^\infty$ is a $\sigma^\prime$-play in the $\mathscr{G}_y(Y)$-game. (Hence the countable product of $W$-spaces of G is a $W$-space of G \cite[Thm.~4.1]{G76}.)
\end{srem}

\begin{proof}
Indeed, let $y_1=y$ and $\sigma^\prime(y_1)=\sigma(y_1)$ and let $\sigma^\prime(y_1,\centerdot)\colon \sigma^\prime(y_1)\rightarrow\mathfrak{N}_{y}^\textrm{o}(Y)$ be defined by
$$
\sigma^\prime(y_1,y_2)=\sigma(y_1)\cap\sigma(y_1,y_2)\quad \forall y_2\in\sigma^\prime(y_1).
$$
Next, define $\sigma^\prime(y_1,y_2,\centerdot)\colon\sigma^\prime(y_1,y_2)\rightarrow\mathfrak{N}_{y}^\textrm{o}(Y)$ by
$$
\sigma^\prime(y_1,y_2,y_3)=\sigma(y_1)\cap\sigma(y_1,y_2)\cap\sigma(y_1,y_3)\cap\sigma(y_1,y_2,y_3)\quad \forall y_3\in\sigma^\prime(y_1,y_2).
$$
If $(y_1,\dotsc,y_n)$ is a partial $\sigma^\prime$-string and $y_{n+1}\in\sigma^\prime(y_1,\dotsc,y_n)$, then
\begin{equation*}\begin{split}
\sigma^\prime(y_1,\dotsc,y_{n+1})=\sigma(y_1)\cap\left(\bigcap\{\sigma(y_{i_1},\dotsc,y_{i_k})\,|\,1=i_1<\dotsm<i_k\le n+1\ \&\ 1\le k\le n+1\}\right).
\end{split}\end{equation*}
Clearly, if $\{y_i\}_{i=1}^\infty$ is a $\sigma^\prime$-sequence, then every subsequence of $\{y_i\}_{i=1}^\infty$ is a $\sigma$-sequence and so, $y_i\to y$ as $i\to\infty$. The proof is complete.
\end{proof}

\begin{slem}\label{5.2.6}
Let $p\in X$ be a $W$-point of G. If $A\subseteq X$ with $p\in\overline{A}$, then there exists a sequence $\{x_n\}_{n=1}^\infty$ in $A$ such that $x_n\to p$ as $n\to\infty$.
\end{slem}

\begin{proof}
Assume $p\notin A$; for otherwise, taking $x_n=p$ for all $n\in\mathbb{N}$. Let $\sigma^\prime$, as in Remark~\ref{5.2.5}, be a winning strategy for Player $\alpha$ in the $\mathscr{G}_p(X)$-game. Let $U_1=\sigma^\prime(p)\in\mathfrak{N}_p^\textrm{o}(X)$; then choose $x_2\in U_1\cap A$. Let $U_2=\sigma^\prime(p,x_2)\in\mathfrak{N}_p^\textrm{o}(X)$; then choose $x_3\in U_2\cap A$. Inductively, we can construct a $\mathscr{G}_p(X)$-play $\{(x_n,U_n)\}_{n=1}^\infty$ with $x_1=p$ based on $\sigma^\prime$. Then $x_n\in A\to p$ as $n\to\infty$. The proof is complete.
\end{proof}

\begin{sthm}\label{5.2.7}
Let $X$ be a $W$-space of G. Then the following two statements hold:
\begin{enumerate}[(1)]
\item If $Y$ is countably tight, then $X\times Y$ has countable tightness (cf.~\cite[Cor.~3.4 $\&$ Thm.~4.2]{G76}).
\item If $X$ and $Y$ are (locally) countable compact spaces, then $X\times Y$ is (locally) countably compact.
\end{enumerate}
\end{sthm}

\begin{proof}
(1) is \cite[Thm.~4.2]{G76}. To prove (2), let $\{(x_n,y_n)\colon n\in\mathbb{N}\}\subseteq X\times Y$ be arbitrarily given. Since $X$ is countably compact, it follows from Lemma~\ref{5.2.6} that there is a subsequence $\{x_{n(i)}\}_{i=1}^\infty$ of $\{x_n\}_{n=1}^\infty$ such that $x_{n(i)}\to x\in X$ as $i\to\infty$. Further, there exists a subnet $\{(x_{n(i(\alpha))},y_{n(i(\alpha))})\colon \alpha\in\Lambda\}$ of $\{(x_{n(i)},y_{n(i)})\}_{i=1}^\infty$ such that $(x_{n(i(\alpha))},y_{n(i(\alpha))})\to(x,y)\in X\times Y$. Thus, $X\times Y$ is countably compact. The proof is complete.
\end{proof}

Consequently, by Theorem~\ref{5.2.7}-(1) and Theorem~\ref{4.1.5}$^\prime$ (resp. Thm.~\ref{4.1.6}), a $W$-space of G with a rich family of Baire (resp. non-meager) subspaces is Baire (resp. non-meager) itself. In addition, Theorem~\ref{5.2.7}-(2) gives us a sufficient condition for the countable compactness of $X\times X$, which is useful via Theorems~\ref{2.5} and \ref{3.4} as follows:

\begin{scor}
Let $f\colon X\times Y\rightarrow \mathbb{R}$ be a separately continuous function, where $Y$ is a countably compact $W$-space of G. Let one of the following two conditions be satisfied:
\begin{enumerate}[(1)]
\item $X$ is a $\Pi$-separable space;
\item Player $\beta$ has no winning strategy $\tau$ with $\tau(X)\in\mathscr{O}(X)$ being non-meager in the $\mathcal{J}_{\!p}(X)$-game.
\end{enumerate}
Then there exists a residual set $R$ in $X$ such that $f$ is jointly continuous at each point of $R\times Y$.
\end{scor}

\begin{sthm}\label{5.2.9}
Let $X$ be a regular, $T_1$, $W$-space of G. Then $X$ is hereditarily Baire if and only if each perfect set in $X$ is uncountable.
\end{sthm}

\begin{proof}
Necessity is obvious. For sufficiency, assume all perfect sets in $X$ are uncountable. To prove that $X$ is hereditarily Baire, suppose to the contrary that $X$ is meager; and so, $X=\bigcup_{n\in\mathbb{N}}F_n$, where each $F_n$ is closed nowhere dense in $X$ and $F_n\subseteq F_{n+1}$. In addition, by Theorem~\ref{5.2.7}-(1), $X$ has countable tightness.
First, there exists a countable subspace $Y$ of $X$ such that $F_n\cap Y$ is nowhere dense in $Y$ for all $n\in\mathbb{N}$ (by \cite[Lem.~2.1]{CP05}). Indeed, we can define countable subsets $Y_0=\{y_0\}\subseteq Y_1\subseteq Y_2\subseteq\dotsm$ of $X$ as follows: Let $Y_0=\{y_0\}$ and suppose $Y_{j-1}$ is already defined and let $A_n=F_n\cap Y_{j-1}$ for all $n\in\mathbb{N}$. Then there exists a countable set $C_n\subseteq X\setminus F_n$ with $A_n\subseteq \overline{C}_n$. Set $Y_j=Y_{j-1}\cup(\bigcup_{n\in\mathbb{N}}C_n)$. Thus, no point of $F_n\cap Y_{j-1}$ is in the interior of $F_n\cap Y_j$ in the space $Y_j$, for all $n\in\mathbb{N}$. So, $Y=\bigcup_{j=0}^\infty Y_j$ is as desired ($\because V:=\textrm{int}_Y(F_n\cap Y)\not=\emptyset\Rightarrow \emptyset\not=V\cap(F_n\cap Y_{j-1})\subseteq V\cap(F_n\cap Y_j)\subseteq\textrm{int}_{Y_j}(F_n\cap Y_j)$ as $j$ sufficiently big). Next, note that $F_n\cap \overline{Y}$ is also nowhere dense in the closed subspace $\overline{Y}$ for all $n\in\mathbb{N}$ and $\overline{Y}=\bigcup_{n=1}^\infty (F_n\cap\overline{Y})$. This shows that $\overline{Y}$ is a meager, regular, $T_1$, separable $W$-space of G. By Lemma~\ref{5.2.4}, $\overline{Y}$ is a first countable space. Thus, by Theorem~\ref{4.2.5}, it follows that $\overline{Y}$ and so $X$ contain a countable perfect set, a contradiction. The proof is complete.
\end{proof}

\begin{sse}[$\bigoplus$-product and $\Sigma$-product of spaces]\label{5.2.10}
Let $\{X_i\}_{i\in I}$ be a family of topological spaces and let $\theta=(\theta_i)_{i\in I}\in\prod_{i\in I}X_i$ be any fixed point. Then: 
\begin{enumerate}[\textbf{a.}]
\item[\textbf{a.}] The \textit{$\bigoplus$-product} of $X_i$, $i\in I$, with base point $\theta$, denoted by $\bigoplus_{i\in I}X_i(\theta)$, is the subspace of $\prod_{i\in I}X_i$ consisting of points $x=(x_i)_{i\in I}\in\prod_{i\in I}X_i$ such that $x_i=\theta_i$ for all but finitely many indices $i\in I$; a \textit{cube} $E$ in $\bigoplus_{i\in I}X_i(\theta)$ is a product $\prod_{i\in I}E_i\subset\bigoplus_{i\in I}X_i(\theta)$, where $E_i\subseteq X_i$ is the $i$th-face of $E$ such that $E_i=\{\theta_i\}$ for all but finitely many indices $i\in I$.

\item[\textbf{b.}] The \textit{$\Sigma$-product} of $X_i$, $i\in I$, with base point $\theta$, denoted by $\Sigma_{i\in I}X_i(\theta)$, is the subspace of $\prod_{i\in I}X_i$ consisting of points $x=(x_i)_{i\in I}\in\prod_{i\in I}X_i$ such that $x_i=\theta_i$ for all but countably many indices $i\in I$; a \textit{cube} $E$ in $\Sigma_{i\in I}X_i(\theta)$ is a product $\prod_{i\in I}E_i\subset\Sigma_{i\in I}X_i(\theta)$, where $E_i\subseteq X_i$ is the $i$th-face of $E$ such that $E_i=\{\theta_i\}$ for all but countably many indices $i\in I$ (cf.~e.g., \cite{E89}).
\end{enumerate}
Clearly, $\overline{\bigoplus_{i\in I}X_i(\theta)}=\prod_{i\in I}X_i$, 
$\overline{\Sigma_{i\in I}X_i(\theta)}=\prod_{i\in I}X_i$, and 
$\Sigma_{i\in\mathbb{N}}X_i(\theta)=\prod_{i\in\mathbb{N}}X_i$.
\end{sse}

\begin{slem}[{cf.~\cite[Thm.~3.5]{LM08}}]\label{5.2.11}
Let $\{X_i\,|\,i\in I\}$ be a family of spaces and $\theta\in\prod_{i\in I}X_i$. If each $\mathcal{F}_i$, $i\in I$, is a rich family for $X_i$, then
\begin{enumerate}
    \item[]$
\Sigma_{i\in I}\mathcal{F}_i(\theta):=\left\{\left({\prod}_{i\in I_0}F_i\right)\times\{(\theta_i)_{i\in I\setminus I_0}\}\subseteq\Sigma_{i\in I}X_i(\theta)\,|\,I_0\subseteq I\textrm{ is countable}\ \&\ F_i\in\mathcal{F}_i\ \forall i\in I_0\right\}
$
\end{enumerate}
is a rich family for $\Sigma_{i\in I}X_i(\theta)$.
\end{slem}

Using Remark~\ref{5.2.5}, we can readily prove the following lemma.

\begin{slem}[{cf.~\cite[Thm.~4.6]{G76}}]\label{5.2.12}
If $\{X_i\,|\,i\in I\}$ is a family of $W$-spaces of G, then $\Sigma_{i\in I}X_i(\theta)$ (and so $\bigoplus_{i\in I}X_i(\theta)$) is a $W$-space of G for every $\theta\in\prod_{i\in I}X_i$.
\end{slem}

\subsection{Cartesian product of Baire $W$-spaces}\label{s5.3}
First of all we shall recall a classic theorem of Oxtoby (1960)~\cite{O60} on the products of  families of Baire spaces.

\begin{sthm}[{cf.~\cite[Thm.~3]{O60}}]\label{5.3.1}
The product of any family of Baire spaces, each of which has a countable $\pi$-base, is a Baire space.
\end{sthm}

\begin{scor}\label{5.3.2}
If $Y$ is a $\Pi$-separable Baire space and each $X_i$, $i\in\mathbb{N}$, is Baire and has a countable $\pi$-base, then $Y\times\prod_{i\in\mathbb{N}}X_i$ is a Baire $\mathcal{N}$-space.
\end{scor}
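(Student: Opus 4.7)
The plan is to establish the Baire property and the $\mathcal{N}$-property separately, then combine them.

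First, I would set up the separability bookkeeping. Each $X_i$ has a countable pseudo-base $\mathcal{B}_i=\{B_k^{(i)}\}_{k\in\mathbb{N}}$; picking one point from each nonempty member yields a countable dense subset, so every $X_i$ is separable. Consequently $\prod_{i\in\mathbb{N}}X_i$ is separable (as a countable product of separable spaces), and $Y\times\prod_{i\in\mathbb{N}}X_i$ is the product of the countable family of separable spaces $\{Y,X_1,X_2,\dotsc\}$, hence a $\Pi$-separable space in the sense of Definition~\ref{2.1}b. Moreover, the family of finite-support boxes $\prod_{i\in F}B_i\times\prod_{i\notin F}X_i$ (with $F\subset\mathbb{N}$ finite and $B_i\in\mathcal{B}_i$) furnishes a countable pseudo-base for $Z:=\prod_{i\in\mathbb{N}}X_i$.

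Next, I would verify Baire-ness. Applying Theorem~\ref{5.2.1} to the family $\{X_i\}_{i\in\mathbb{N}}$ shows that $Z$ is Baire. It then remains to pass from ``$Y$ Baire and $Z$ Baire with a countable pseudo-base'' to ``$Y\times Z$ Baire'', and for this I would invoke the topological Fubini theorem (Theorem~\ref{5.3.6}). Alternatively, one may argue directly via the Banach-Mazur game and Theorem~\ref{2.2}: given any presumed winning strategy $\tau$ with $\tau(\emptyset)\in\mathscr{O}(Y\times Z)$ for Player $\beta$ on $Y\times Z$, enumerate the countable pseudo-base of $Z$ and make Player $\alpha$'s $Z$-coordinate choices deterministically from this enumeration; the resulting projection yields a winning $\beta$-strategy on $Y$, contradicting the Baire property of $Y$. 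The $\mathcal{N}$-property then follows at once: $Y\times\prod_{i\in\mathbb{N}}X_i$ is a Baire $\Pi$-separable space, so by Theorem~\ref{1.3}-(1), equivalently the consequence of Theorem~\ref{2.5} recorded immediately after its proof, it is an $\mathcal{N}$-space.

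The main obstacle is the mixed Baire-ness step, because Oxtoby's Theorem~\ref{5.2.1} cannot be applied directly to the full family $\{Y,X_1,X_2,\dotsc\}$: the factor $Y$ is not assumed to carry a countable pseudo-base. The asymmetric Fubini machinery of \S\ref{s5.3} is exactly what is needed to let the pseudo-base on $Z$ alone drive a Kuratowski-Ulam-type argument, with the Baire property of $Y$ handling the $Y$-direction; the remaining steps are routine.
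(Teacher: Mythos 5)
Your route is essentially the paper's: Theorem~\ref{5.2.1} makes $Z=\prod_{i\in\mathbb{N}}X_i$ a separable Baire space with a countable pseudo-base, a Kuratowski--Ulam-type argument then makes $Y\times Z$ Baire, and Theorem~\ref{2.5} (the product being separable, hence $\Pi$-separable) gives the $\mathcal{N}$-property. The one step that needs repair is your citation for the mixed Baire-ness step. Theorem~\ref{5.3.6} is not applicable: its hypotheses require each factor to be a $W$-space of G-type possessing a rich family of non-meager subspaces, and the paper notes right after Lemma~\ref{5.3.4} that a space with a countable pseudo-base need not be a $W$-space of G-type; for the same reason the ``Fubini machinery of $\S$\ref{5.3}'' (Lemma~\ref{5.3.1}, a special case of Lemma~A.8) is keyed to $W$-points, not to pseudo-bases. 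The tool that does exactly what you describe --- letting the countable pseudo-base on $Z$ alone drive the slicing while the Baire property of $Y$ handles the other factor --- is Lemma~A.1 together with Theorem~A.3 (Kuratowski--Ulam--Sikorski), and that is what the paper invokes: a basic open set $U\times V\subseteq Y\times Z$ of first category would, by Theorem~A.3 applied with the countable pseudo-base on $Z$, force $U$ or $V$ to be of first category, contradicting that $Y$ and $Z$ are Baire. Your Banach--Mazur alternative is workable but amounts to re-proving this; with the citation corrected to Theorem~A.3, your argument coincides with the paper's proof.
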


\begin{proof}
Let $Z=\prod_{i\in\mathbb{N}}X_i$. Then by Theorem~\ref{5.3.1}, $Z$ is a Baire space having a countable $\pi$-base (cf.~\cite[(2.5)]{O60}); and so, $Z$ is separable. Furthermore, $Y\times Z$ is $\Pi$-separable and Baire (by Thm.~A.3). Then by Theorem~\ref{2.5}, it follows that $Y\times Z$ is an $\mathcal{N}$-space. The proof is complete.
\end{proof}

\begin{sthm}\label{5.3.3}
Let $X$ be a Baire space and $Y$ an almost $W$-space of G with countable tightness and having a rich family of Baire subspaces. Then $X\times Y$ is Baire.
\end{sthm}

\begin{proof}
Using Lemma~A.8 and a modification of Proof of \cite[Thm.~4.4]{LM08} as follows: Let $R\subseteq X\times Y$ be any residual set and $U\times V$ any basic open set in $X\times Y$. We need prove $(U\times V)\cap R\not=\emptyset$. For that, let $y\in V$ and we can then choose a rich family $\mathcal{F}$ of Baire subspaces for $Y$ such that each member $F\in\mathcal{F}$ contains $y$. Then by Lemma~A.8,
$X_R=\{x\in X\,|\,\exists F\in\mathcal{F}\textrm{ s.t. }F\cap R_x\textrm{ is residual in }F\}$
is residual in $X$. Let $x\in X_R\cap U$ ($\not=\emptyset$ for $X$ is Baire). Since $F(x)\in\mathcal{F}$ is Baire and $y\in F(x)$, there is a net $y_\alpha(x)\in R_x$ with $(x,y_\alpha(x))\in R\to(x,y)\in U\times V$. Thus, $(U\times V)\cap R\not=\emptyset$.
\end{proof}

Theorem~\ref{5.3.3} is comparable with \cite[Thm.~4.4]{LM08} in which $Y$ is a $W$-space of G (so $Y$ has countable tightness by Thm.~\ref{5.2.7}) and $X, Y$ are in the class of Hausdorff spaces; moreover, since both of $X$ and $Y$ need not have a countable $\pi$-base, Theorem~\ref{5.3.1} or Corollary~A.1$^{\prime\prime}$ does not work here. Note that a separable $W$-space of G (not necessarily quasi-regular) need not have a countable $\pi$-base; and moreover, a space with a countable $\pi$-base need not be a $W$-space of G.
\begin{slem}[{cf.~\cite[Cor.~4.5]{LM08} in the class of Hausdorff regular spaces}]\label{5.3.4}
Let $\{X_i\}_{i\in I}$ be a family of $W$-spaces of G such that each of which has a rich family of Baire quasi-regular subspaces. Then $\Sigma_{i\in I}X_i(\theta)$ is a $W$-space of G having a rich family of Baire subspaces for every point $\theta\in\prod_{i\in I}X_i$. In particular, $\Sigma_{i\in I}X_i(\theta)$ is Baire.
\end{slem}

\begin{proof}
First, $\Sigma_{i\in I}X_i(\theta)$ is a $W$-space of G by Lemma~\ref{5.2.12}. Let $\mathcal{F}_i$, for each $i\in I$, be a rich family of Baire subspaces for $X_i$. Then by Lemma~\ref{5.2.11}, $\Sigma_{i\in I}\mathcal{F}_i(\theta)$ is a rich family for $\Sigma_{i\in I}X_i(\theta)$. In view of Theorem~\ref{4.1.5}$^\prime$ and Theorem~\ref{5.2.7}, it remains to prove that every member of $\Sigma_{i\in I}\mathcal{F}_i(\theta)$ is a Baire subspace of $\Sigma_{i\in I}X_i(\theta)$. In fact, if $F\in\Sigma_{i\in I}\mathcal{F}_i(\theta)$, then $F\cong\prod_{i\in I_0}F_i$, where $I_0$ is some countable subset of $I$ and each $F_i\in\mathcal{F}_i$ is a quasi-regular, separable, Baire $W$-space of G. Then by Lemma~\ref{5.2.3} and Theorem~\ref{5.3.1}, it follows easily that $F$ is a Baire space.
The proof is complete.
\end{proof}

Note that our proof of Lemma~\ref{5.3.4} is comparable with Lin and Moors' proof of \cite[Cor.~4.5]{LM08}. To employ Theorem~\ref{5.3.1}, \cite[Thm.~4.3]{LM08} and \cite[Thm.~3.6]{G76} (i.e., Lem.~\ref{5.2.4}), the involving spaces in \cite{LM08} must be in the class of Hausdorff regular spaces. However, we do not need those conditions and \cite[Thm.~4.3]{LM08} here. Moreover, we can improve Theorem~\ref{5.1.2} as follows:

\begin{sthm}\label{5.3.5}
Let $Y$ be a Baire space and $\{X_i\}_{i\in I}$ a family of $W$-spaces of G. If each $X_i$, $i\in I$, possesses a rich family of quasi-regular Baire subspaces, then $Y\times\prod_{i\in I}X_i$ is a Baire space.
\end{sthm}

\begin{proof}
Let $\theta\in\prod_{i\in I}X_i$ be arbitrarily given. Then by Lemma~\ref{5.3.4} and Theorem~\ref{5.3.3}, it follows that $Y\times\Sigma_{i\in I}X_i(\theta)$ is a Baire space. However, since $Y\times\Sigma_{i\in I}X_i(\theta)$ is dense in $Y\times\prod_{i\in I}X_i$, $Y\times\prod_{i\in I}X_i$ is Baire. The proof is complete.
\end{proof}

\begin{scor}[{cf.~\cite[Thm.~2]{M06}}]\label{5.3.6}
If $X$ is a Baire space and $Y$ is a hereditarily Baire metric space, then $X\times Y$ is Baire.
\end{scor}

It may be interesting to remove the technical condition ``quasi-regular'' in Theorem~\ref{5.3.5}. For example we can prove the following:

\begin{sthm}\label{5.3.7}
Let $Y$ be a Baire space and $\{X_i\}_{i\in I}$ a family of $W$-spaces of G. If each $X_i$, $i\in I$, possesses a rich family of Baire subspaces having a countable $\pi$-base, then $Y\times\prod_{i\in I}X_i$ is a Baire space.
\end{sthm}
\begin{proof}
    By Lemma~\ref{5.2.11} and Theorem~\ref{5.3.1}, it follows that $\Sigma_{i\in I}X_i(\theta)$ is a $W$-space of G and has a rich family of Baire subspaces. Then $Y\times\Sigma_{i\in I}X_i(\theta)$ is a Baire space by Theorem~\ref{5.3.3}. However, since $Y\times\Sigma_{i\in I}X_i(\theta)$ is dense in $Y\times\prod_{i\in I}X_i$, hence $Y\times\prod_{i\in I}X_i$ is Baire. The proof is complete.    
\end{proof}

\subsection{Cartesian product of non-meager spaces}\label{s5.4}
We need the following topological Fubini theorem, due to Lin-Moors (2008) \cite[Thm.~4.3]{LM08} that is for $Y$ in the class of Hausdorff $W$-spaces but their proof is still valid for the following general case (see Lem.~A.8 for a more general version), which is a variant of a classic Fubini theorem.

\begin{slem}[A special case of Lem.~A.8]\label{5.4.1}
Let $X$ be a space, $Y$ an almost $W$-space of G having countable tightness, and $R$ a residual subset of $X\times Y$. If $\mathcal{F}$ is any rich family for $Y$, then
\begin{enumerate}
    \item[] $X_R=\{x\in X\,|\,\exists F\in\mathcal{F}\textrm{ s.t. }F\cap R_x\textrm{ is residual in }F\}$
\end{enumerate}
is residual in $X$.
\end{slem}

The following is a variant of Theorem~\ref{5.3.3} (also Lin-Moors \cite[Thm.~4.4]{LM08}) where $Y$ has a rich family of Baire subspaces.

\begin{sthm}\label{5.4.2}
Let $X$ be a non-meager space and $Y$ a $W$-space of G having a rich family of non-meager subspaces. Then $X\times Y$ is non-meager.
\end{sthm}

\begin{proof}
Let $\{G_n\}_{n=1}^\infty$ be any sequence of open dense subsets of $X\times Y$. We need only prove that $R:=\bigcap_{n=1}^\infty G_n\not=\emptyset$. For that, we first take a rich family $\mathcal{F}$ of non-meager subspaces for $Y$. Then by Lemma~\ref{5.4.1}, it follows that $X_R\not=\emptyset$ for $X$ is non-meager. Now, for all $x\in X_R$, $F\cap R_x\not=\emptyset$ for some $F\in\mathcal{F}$ since $F$ is non-meager. Thus, $R\not=\emptyset$.
\end{proof}

We can generalize Oxtoby's theorem \cite[Thm.~3]{O60} (i.e., Thm.~\ref{5.3.1}). For that we shall need a lemma, which is contained in Oxtoby's proof of \cite[(2.7)]{O60} in the special case that each factor has a countable $\pi$-base and $\mathscr{F}$ consists of basic open sets:

\begin{slem}[{cf.~\cite{M47} or \cite[(2.7)]{O60}}]\label{5.4.3}
Let $X$ be a $\Pi$-separable space. Then any disjoint family $\mathscr{F}$ of open subsets of $X$ is countable.
\end{slem}

\begin{proof}
Let $X=\prod_{\alpha\in A}X_\alpha$, where each $X_\alpha$, $\alpha\in A$, is a separable space.
Let $D_\alpha$ be a countable dense set in $X_\alpha$. Assign positive weights with sum $1$ to the points of $D_\alpha$. For any Borel set $E\subseteq X_\alpha$, let $\mu_\alpha(E)$ be the sum of the weights of the points of $D_\alpha\cap E$. Then $\mu_\alpha$ is a measure defined for all Borel subsets of $X_\alpha$ such that $\mu_\alpha(X_\alpha)=1$ and $\mu_\alpha(U)>0$ for all $U\in\mathscr{O}(X_\alpha)$. Let $(X,\bigotimes_{\alpha\in A}\mathscr{B}_\alpha,\mu)$ denote the product of the Borel probability spaces $(X_\alpha,\mathscr{B}_\alpha,\mu_\alpha)$, $\alpha\in A$. As $\mu(X)=1$, it follows that any disjoint family of open sets in $X$ is countable. The proof is complete.
\end{proof}

\begin{sthm}\label{5.4.4}
    Let $X=\prod_{\alpha\in A}X_\alpha$, where each factor $X_\alpha$ is a separable space. If $\prod_{\alpha\in A^\prime}X_\alpha$ is a Baire space for each countable set $A^\prime\subset A$, then $X$ is a Baire $\mathcal{N}$-space.
\end{sthm}

\begin{proof}
    Since $X$ is a $\Pi$-separable space, in view of Theorem~\ref{1.3}-(1) or Corollary~\ref{2.5}A we need only prove that $X$ is a Baire space. For that, let $G_n$, $n\in\mathbb{N}$, be a dense open subset of $X$. To prove $X$ Baire, it suffices to prove that $\bigcap_nG_n$ is dense in $X$. For this, for each $n\in\mathbb{N}$, let $\{V_{n,m}\colon m=1,2,\dotsc\}$ be a maximal disjoint family of basic open sets contained in $G_n$ (by Lem.~\ref{5.4.3}). Then $H_n=\bigcup_{m}V_{n,m}$ is dense open in $G_n$; and moreover, there exists a countable set $A_n^\prime\subset A$ and a dense open set $H_n^\prime\subseteq\prod_{\alpha\in A_n^\prime}X_\alpha$ such that $H_n=H_n^\prime\times\prod_{\alpha\in A\setminus A_n^\prime}X_\alpha\subseteq G_n$. Let $A^\prime=\bigcup_{n=1}^\infty A_n^\prime$ and $H_n^{\prime\prime}=H_n^\prime\times\prod_{\alpha\in A^\prime\setminus A_n^\prime}X_\alpha$. Then $H_n^{\prime\prime}$ is dense open in $\prod_{\alpha\in A^\prime}X_\alpha$ for each $n\in\mathbb{N}$. Thus, $R:=\bigcap_{n=1}^\infty H_n^{\prime\prime}$ is dense in $\prod_{\alpha\in A^\prime}X_\alpha$. So, $R\times \prod_{\alpha\in A\setminus A^\prime}X_\alpha\subseteq\bigcap_{n=1}^\infty G_n$ is dense in $X$. The proof is complete.
\end{proof}

\begin{sthm}\label{5.4.5}
    Let $X=\prod_{\alpha\in A}X_\alpha$, where each factor $X_\alpha$ is a separable space. If $\prod_{\alpha\in A^\prime}X_\alpha$ is a non-meager space for each countable set $A^\prime\subset A$, then $X$ is a non-meager g$\mathcal{N}$-space.
\end{sthm}

\begin{proof}
    In view of Theorem~\ref{1.3}-(1) or Corollary~\ref{2.5}A, we need only prove that $X$ is non-meager. For that, let $G_n$, $n\in\mathbb{N}$, be a dense open subset of $X$. To prove $X$ non-meager, it suffices to prove that $\bigcap_nG_n\not=\emptyset$. For this, for each $n\in\mathbb{N}$, let $\{V_{n,m}\colon m=1,2,\dotsc\}$ be a maximal disjoint family of basic open sets contained in $G_n$ (by Lem.~\ref{5.4.3}). Then $H_n=\bigcup_{m}V_{n,m}$ is dense open in $G_n$; and moreover, there exists a countable set $A_n^\prime\subset A$ and a dense open set $H_n^\prime\subseteq\prod_{\alpha\in A_n^\prime}X_\alpha$ such that $H_n=H_n^\prime\times\prod_{\alpha\in A\setminus A_n^\prime}X_\alpha\subseteq G_n$. Let $A^\prime=\bigcup_{n=1}^\infty A_n^\prime$ and $H_n^{\prime\prime}=H_n^\prime\times\prod_{\alpha\in A^\prime\setminus A_n^\prime}X_\alpha$. Then $H_n^{\prime\prime}$ is dense open in $\prod_{\alpha\in A^\prime}X_\alpha$ for each $n\in\mathbb{N}$. Thus, $R:=\bigcap_{n=1}^\infty H_n^{\prime\prime}\not=\emptyset$ in $\prod_{\alpha\in A^\prime}X_\alpha$. So, $\emptyset\not=R\times \prod_{\alpha\in A\setminus A^\prime}X_\alpha\subseteq\bigcap_{n=1}^\infty G_n$. The proof is complete.
\end{proof}
%%%%%%%%%%%%%%%%%%%%%%%%%%%%%%%%%%%%%%%%%%%%%%%%%%%%%%%%%%%%%%%%%%%%%%
\section{Category analogues of Kolmogoroff's Zero-One Law}\label{s6}
We shall prove two category analogues\,(Thm.~\ref{6.2.4} and Thm.~\ref{6.2.6}) of the classic zero-one law of Kolmogoroff in the theory of probability. Given $A,B\subseteq X$, $A\vartriangle B:=(A\setminus B)\cup (B\setminus A)$ is called the symmetric difference of $A$ and $B$ in $X$. Then $A\vartriangle B=A^c\vartriangle B^c$, where $A^c=X\setminus A$ and $B^c=X\setminus B$.
\subsection{Ergodicity of shifts and finite permutations}
For our convenience we shall introduce the classic Kolmogoroff and Hewitt-Savage zero-one laws.
Let $I$ be an infinite index set, denumerable or non-denumerable. For each $i\in I$, let $(\Omega_i,\mathscr{F}_i,P_i)$ be a probability space. Let
\begin{enumerate}
    \item[] $X={\prod}_{i\in I}\Omega_i=\{x=(x_i)_{i\in I}\colon x_i\in \Omega_i\ \forall i\in I\}$.
\end{enumerate}
On $X$ we have the canonical product $\sigma$-field $\bigotimes_{i\in I}\mathscr{F}_i$, the smallest $\sigma$-field on $X$ making each coordinate projection $\pi_i\colon X\rightarrow\Omega_i$ measurable, and the product probability $\bigotimes_{i\in I}P_i$ given by
\begin{enumerate}
    \item[] $
{\bigotimes}_{i\in I}P_i(A_{i_1}\times\dotsm\times A_{i_n})=P_{i_1}(A_{i_1})\dotsm P_{i_n}(A_{i_n})$ 
\end{enumerate}
for all $n\in\mathbb{N}$, $i_1,\dotsc, i_n\in I$, and $A_{i_1}\in\mathscr{F}_{i_1},\dotsc,A_{i_n}\in\mathscr{F}_{i_n}$,
where
\begin{enumerate}
    \item[] $
A_{i_1}\times\dotsm\times A_{i_n}=\left\{x=(x_i)_{i\in I}\in X\,|\,x_{i_1}\in A_{i_1}, \dotsc, x_{i_n}\in A_{i_n}\right\}\in{\bigotimes}_{i\in I}\mathscr{F}_i$. 
\end{enumerate}
Note that the collection of all cylindrical sets $A_{i_1}\times\dotsm\times A_{i_n}$ of finite length is an algebra, which may generates $\bigotimes_{i\in I}\mathscr{F}_i$.
Given any finite set $J\subset I$, we can define $\sigma$-subfields of $\bigotimes_{i\in I}\mathscr{F}_i$ as follows:
\begin{enumerate}
    \item[] $
\left({\bigotimes}_{j\in J}\mathscr{F}_j\right)\times\left({\prod}_{i\in I\setminus J}\Omega_i\right)\quad \textrm{and}\quad\left({\prod}_{j\in J}\Omega_j\right)\times\left({\bigotimes}_{i\in I\setminus J}\mathscr{F}_i\right)$.
\end{enumerate}
Let
\begin{enumerate}
    \item[] $
\mathscr{F}^{(\infty)}={\bigcap}_J\left({\prod}_{j\in J}\Omega_j\right)\times\left({\bigotimes}_{i\in I\setminus J}\mathscr{F}_i\right)
$
\end{enumerate}
where $J$ varies in the collection of all finite subsets of $I$, which is of course a $\sigma$-subfield of $\bigotimes_{i\in I}\mathscr{F}_i$.

\begin{sse}[Tail events]\label{6.1.1}
An event $A\in\bigotimes_{i\in I}\mathscr{F}_i$ is called a \textit{tail event} if $A\in\mathscr{F}^{(\infty)}$. See, e.g., \cite[p.~53]{K02} for the case that $I$ is denumerable.
\end{sse}

\begin{sthm}[{Kolmogoroff's 0-1 Law; cf.~\cite[Thm.~3.13]{K02} or \cite[Thm.~21.3]{O80} for $I=\mathbb{Z}_+$}]\label{6.1.2}
Let $(\Omega_i,\mathscr{F}_i,P_i)$, $i\in I$, be any family of probability spaces. Then, $\bigotimes_{i\in I}P_i(A)=0$ or $1$ for all $A\in\mathscr{F}^{(\infty)}$.
\end{sthm}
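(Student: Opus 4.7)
The plan is to follow the classical independence-of-self argument, first for countable $I$ and then reducing the general case to this one. First I would observe the basic independence structure built into the product measure: for any finite $J\subset I$, the $\sigma$-fields $\mathscr{A}_J:=\bigl(\bigotimes_{j\in J}\mathscr{F}_j\bigr)\times\bigl(\prod_{i\in I\setminus J}\Omega_i\bigr)$ and $\mathscr{B}_J:=\bigl(\prod_{j\in J}\Omega_j\bigr)\times\bigl(\bigotimes_{i\in I\setminus J}\mathscr{F}_i\bigr)$ are mutually independent under $P:=\bigotimes_{i\in I}P_i$, since on generating measurable rectangles the product formula factors through the partition of $I$ into $J$ and $I\setminus J$. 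By definition of $\mathscr{F}^{(\infty)}$, a tail event $A$ lies in $\mathscr{B}_J$ for every finite $J\subset I$.

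Next I would dispose of the countable case $I=\mathbb{N}$. Let $\mathscr{C}=\bigcup_{n\in\mathbb{N}}\mathscr{A}_{\{1,\dots,n\}}$; this is a $\pi$-system generating $\bigotimes_{i\in\mathbb{N}}\mathscr{F}_i$. The previous paragraph shows $P(A\cap C)=P(A)P(C)$ for every $C\in\mathscr{C}$. By Dynkin's $\pi$-$\lambda$ theorem the class $\{B\in\bigotimes_{i}\mathscr{F}_i:P(A\cap B)=P(A)P(B)\}$ is a $\lambda$-system containing $\mathscr{C}$, hence equals the entire product $\sigma$-field. Since $A$ itself belongs to this $\sigma$-field, specialising $B=A$ yields $P(A)=P(A)^2$, which forces $P(A)\in\{0,1\}$.

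For the general (possibly uncountable) index set $I$, the key reduction step is the standard fact that every $A\in\bigotimes_{i\in I}\mathscr{F}_i$ depends on only countably many coordinates: the collection of sets depending on a countable set of coordinates is a $\sigma$-field containing all measurable rectangles of finite length, hence contains the whole product $\sigma$-field. Choose a countable $I_0\subseteq I$ and $A_0\in\bigotimes_{i\in I_0}\mathscr{F}_i$ with $A=A_0\times\prod_{i\in I\setminus I_0}\Omega_i$. Since $A\in\mathscr{B}_J$ for every finite $J\subset I$, in particular for every finite $J\subset I_0$, the representative $A_0$ is a tail event in the countable product $\bigotimes_{i\in I_0}\Omega_i$. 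Applying the countable case to $A_0$ under $\bigotimes_{i\in I_0}P_i$ gives $(\bigotimes_{i\in I_0}P_i)(A_0)\in\{0,1\}$, and Fubini on the remaining factor $\prod_{i\in I\setminus I_0}\Omega_i$ (which has total mass $1$) yields $P(A)=(\bigotimes_{i\in I_0}P_i)(A_0)\in\{0,1\}$.

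The only genuine obstacle is verifying the countable-dependence lemma cleanly for arbitrary (not necessarily separable) factors; this is standard but deserves a one-line monotone-class argument, and once it is in place the rest of the proof is the familiar $\pi$-$\lambda$ routine. Everything else is purely notational bookkeeping.
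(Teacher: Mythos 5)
Your argument is correct, but it follows a different route from the paper's. The paper proves the theorem in one stroke for arbitrary $I$ by approximation in measure: since the cylinder sets of finite length form a generating algebra, one picks finite $J_n\subset I$ and $B_n\in\bigl(\bigotimes_{j\in J_n}\mathscr{F}_j\bigr)\times\bigl(\prod_{i\in I\setminus J_n}\Omega_i\bigr)$ with $\bigotimes_{i\in I}P_i(A\vartriangle B_n)<1/n$; the tail hypothesis makes $A$ independent of each $B_n$, and letting $n\to\infty$ in $P(A\cap B_n)=P(A)P(B_n)$ gives $P(A)=P(A)^2$ directly. You instead run the Dynkin $\pi$-$\lambda$ argument to show that a tail event is independent of the entire product $\sigma$-field (hence of itself) when $I$ is countable, and then reduce the general case via the countable-support lemma for sets in $\bigotimes_{i\in I}\mathscr{F}_i$. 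Both are standard and sound; what your route buys is the stronger intermediate conclusion (independence of $A$ from the whole $\sigma$-field) at the cost of the extra reduction step and the verification that the representative $A_0$ is again a tail event in the countable subproduct, whereas the paper's approximation argument avoids any case split on the cardinality of $I$ and is shorter, though the density-in-measure of the cylinder algebra that it invokes is itself a monotone-class fact of comparable depth to your $\pi$-$\lambda$ step. The one place in your write-up that genuinely needs the extra line you flag is the countable-dependence lemma and the passage from ``$A\in\mathscr{B}_J$ for all finite $J\subset I$'' to ``$A_0$ is a tail event over $I_0$''; both are routine section arguments and do not constitute a gap.
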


\begin{proof}
Let $A\in\mathscr{F}^{(\infty)}$ be any tail event. Then for all $n\in\mathbb{N}$, there exists a finite set $J_n\subset I$ and an event $B_n\in\left({\bigotimes}_{j\in J_n}\mathscr{F}_j\right)\times\left({\prod}_{i\in I\setminus J_n}\Omega_i\right)$ such that $\bigotimes_{i\in I}P_i(A\vartriangle B_n)<1/n$. By $A\in\mathscr{F}^{(\infty)}$, there exists an event $C_n\in{\bigotimes}_{i\in I\setminus J_n}\mathscr{F}_i$ such that $A=\left(\prod_{j\in J_n}\Omega_j\right)\times C_n$. Thus,
\begin{equation*}\begin{split}
{\bigotimes}_{i\in I}P_i(A)&=\lim_{n\to\infty}{\bigotimes}_{i\in I}P_i(B_n)=\lim_{n\to\infty}{\bigotimes}_{i\in I}P_i(A\cap B_n)=\lim_{n\to\infty}{\bigotimes}_{i\in I}P_i(A)\cdot{\bigotimes}_{i\in I}P_i(B_n)\\
&={\bigotimes}_{i\in I}P_i(A)\cdot{\bigotimes}_{i\in I}P_i(A).
\end{split}\end{equation*}
So $\bigotimes_{i\in I}P_i(A)=0$ or $1$. The proof is complete.
\end{proof}

\begin{sse}[$G$-shift]
Let $G$ be an infinite group. We now consider the special case where all $(\Omega_i,\mathscr{F}_i,P_i)$, $i\in G$, are copies of a probability space $(\Omega,\mathscr{F},P)$. In this case let
\begin{enumerate}
    \item[]$
\Omega^G={\prod}_{i\in G}\Omega_i$,\quad $\mathscr{F}^G={\bigotimes}_{i\in G}\mathscr{F}_i$,\quad $P^G={\bigotimes}_{i\in G}P_i$.
\end{enumerate}
Given $t\in G$ and $x=(x_i)_{i\in G}\in\Omega^G$, put $tx=(x_{it})_{i\in G}$. Then $tx\in\Omega^G$. Let
\begin{enumerate}
    \item[]$
\sigma\colon G\times\Omega^G\rightarrow\Omega^G,\quad (t,x)\mapsto tx
$.
\end{enumerate}
Clearly, $P^G=t_*P^G$ for all $t\in G$. Thus, $G\curvearrowright_\sigma\!\left(\Omega^G,\mathscr{F}^G, P^G\right)$ is a measure-preserving flow. Note that a $G$-invariant event $A\in\mathscr{F}^G$ (i.e., $tA=A\ \forall t\in G$) is not necessarily a tail event.
\end{sse}

\begin{sthm}[Ergodicity of $G$-shift]\label{6.1.4}
The $G$-shift flow $G\curvearrowright_\sigma\!\left(\Omega^G,\mathscr{F}^G, P^G\right)$ is ergodic; that is, if $A\in\mathscr{F}^G$ is $G$-invariant, then $P^G(A)=0$ or $1$.
\end{sthm}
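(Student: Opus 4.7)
The plan is to emulate the cylinder-approximation proof of Kolmogoroff's zero-one law (Theorem~\ref{6.1.2}), replacing the tail condition by the $G$-invariance of $A$ combined with a suitably ``far'' group translation. The key point is that for most $t \in G$, the translates $B$ and $tB$ of a finite cylinder depend on disjoint blocks of coordinates, and hence are $P^G$-independent; $G$-invariance plus measure-preservation will then let us replace $A$ by $A \cap A$ with $B \cap tB$ and square the measure.

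Concretely, fix $\varepsilon > 0$ and choose, by the standard approximation built into the product construction of $P^G$, a finite set $J \subset G$ and a cylinder $B = B_J \times \prod_{i \in G\setminus J}\Omega_i$ with $B_J \in \bigotimes_{j \in J}\mathscr{F}_j$ such that $P^G(A \vartriangle B) < \varepsilon$. A direct calculation from $tx = (x_{it})_{i \in G}$ shows that $tB$ is the cylinder supported on the coordinates in $Jt^{-1}$. Since $\{j'j^{-1} : j, j' \in J\}$ is a finite subset of the infinite group $G$, one can pick $t \in G$ with $J \cap Jt^{-1} = \emptyset$, so that $B$ and $tB$ live on disjoint coordinate blocks and the product structure of $P^G$ forces
\[
P^G(B \cap tB) \;=\; P^G(B) \cdot P^G(tB) \;=\; P^G(B)^2.
\]

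Meanwhile, the $G$-invariance $tA = A$ together with measure-preservation of $\sigma$ yields $P^G(A \vartriangle tB) = P^G(t(A \vartriangle B)) = P^G(A \vartriangle B) < \varepsilon$. The triangle inequality for the pseudo-metric $(C,D) \mapsto P^G(C \vartriangle D)$ then gives $P^G(A \vartriangle (B \cap tB)) < 2\varepsilon$, so that $\bigl| P^G(A) - P^G(B)^2 \bigr| < 2\varepsilon$, while also $\bigl| P^G(A) - P^G(B) \bigr| < \varepsilon$. Combining these two bounds forces $\bigl| P^G(A) - P^G(A)^2 \bigr| = O(\varepsilon)$; sending $\varepsilon \downarrow 0$ yields $P^G(A) = P^G(A)^2$, hence $P^G(A) \in \{0,1\}$.

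The only genuine technicality I anticipate is the conventional bookkeeping for the right-shift $tx = (x_{it})_{i \in G}$: namely, verifying cleanly that $tB$ is supported on $Jt^{-1}$ (rather than, say, $Jt$) and that the measure-preserving identity $t_{\ast}P^G = P^G$ does transfer to invariance of the symmetric-difference pseudo-metric. Everything else is routine, since the cylinder approximation is built into the very definition of $\bigotimes_{i \in G} P_i$ and the closing estimate is a short triangle-inequality computation.
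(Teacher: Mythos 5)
Your proposal is correct and follows essentially the same route as the paper's proof: approximate the invariant set $A$ by a finite cylinder $B$, translate by a group element $t$ moving the supporting coordinates off themselves so that $B$ and $tB$ become independent, use invariance and measure-preservation to keep $tB$ close to $A$, and conclude $P^G(A)=P^G(A)^2$. The only discrepancy is the trivial bookkeeping you already flagged (the finite set of ``bad'' translations is $\{j^{-1}j'\,|\,j,j'\in J\}$ rather than $\{j'j^{-1}\}$ for the convention $tx=(x_{it})_{i\in G}$), which does not affect the argument.
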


\begin{proof}
Let $A\in\mathscr{F}^G$ be any $G$-invariant event. For all $n\in\mathbb{N}$, there exists a finite set $J_n\subset G$ and an event $B_n\in\mathscr{F}^{J_n}\times\Omega^{G\setminus J_n}$ such that $P^G(A\vartriangle B_n)<1/n$. As $J_n$ is finite and $G$ is an infinite group, it follows that one can choose an element $t_n\in G$ such that $J_nt_n\cap J_n=\emptyset$. Then
$$
P^G(t_nB_n)=P^G(B_n)\quad \textrm{and}\quad P^G(A\vartriangle B_n)=P^G(t_n(A\vartriangle B_n))=P^G(A\vartriangle t_nB_n)\to 0\ \textrm{as }n\to\infty.
$$
So
$$
P^G(A)=\lim_{n\to\infty}P^G(A\cap B_n)=\lim_{n\to\infty}P^G(B_n\cap t_nB_n)=\lim_{n\to\infty}P^G(B_n)\cdot P^G(t_nB_n)=P^G(A)\cdot P^G(A).
$$
$P^G(A)=0$ or $1$. The proof is complete.
\end{proof}

\begin{sse}[Symmetric events]
Let $I$ be an infinite index set, $(\Omega,\mathscr{F},P)$ a probability space, and $\mathcal{P}_{\!I}$ the group of all finite permutations of $I$.
A set $A\subseteq\Omega^I$ is called \textit{symmetric}, if $px=(x_{p(i)})_{i\in I}\in A$ for all $x=(x_i)_{i\in I}\in A$ and all $p\in\mathcal{P}_{\!I}$. Let
\begin{enumerate}
    \item[] $
\rho\colon\mathcal{P}_{\!I}\times\Omega^I\rightarrow\Omega^I,\quad (p,x)\mapsto px
$.
\end{enumerate}
Then, $P^I=p_*P^I$ for all $p\in\mathcal{P}_{\!I}$; and so, $\mathcal{P}_{\!I}\curvearrowright_\rho\!\left(\Omega^I,\mathscr{F}^I,P^I\right)$ is a measure-preserving flow. Moreover, $A\in\mathscr{F}^I$ is symmetric if and only if $A$ is $\mathcal{P}_{\!I}$-invariant (cf.~\cite[p.~53]{K02} for $I=\mathbb{N}$).
\end{sse}

\begin{sthm}[{Hewitt-Savage 0-1 Law; cf.~\cite[Thm.~11.3]{HS55} or \cite[Thm.~3.15]{K02} for $I=\mathbb{Z}_+$}]\label{6.1.6}
Let $I$ be an infinite index set and $(\Omega,\mathscr{F},P)$ a probability space. Then $\mathcal{P}_{\!I}\curvearrowright_\rho\!\left(\Omega^I,\mathscr{F}^I,P^I\right)$ is ergodic; i.e., $P^I(A)=0$ or $1$ for all symmetric event $A\in\mathscr{F}^I$.
\end{sthm}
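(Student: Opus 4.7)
The plan is to mirror the proof of Theorem~\ref{6.1.4} (ergodicity of the $G$-shift), with the ``shift by a group element $t_n$'' trick replaced by a ``shift by a finite permutation $p_n$'' trick. Let $A\in\mathscr{F}^I$ be symmetric, so that $pA=A$ for every $p\in\mathcal{P}_{\!I}$. The collection
$$
\mathscr{A}=\bigcup\left\{\mathscr{F}^J\times\Omega^{I\setminus J}\,|\,J\subset I\textrm{ finite}\right\}
$$
is an algebra on $\Omega^I$ whose generated $\sigma$-algebra is precisely $\mathscr{F}^I=\bigotimes_{i\in I}\mathscr{F}_i$. Hence, by the standard approximation theorem for $\sigma$-algebras generated by an algebra, for each $n\in\mathbb{N}$ there exist a finite set $J_n\subset I$ and an event $B_n\in\mathscr{F}^{J_n}\times\Omega^{I\setminus J_n}$ such that $P^I(A\vartriangle B_n)<1/n$.

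Since $I$ is infinite and $J_n$ is finite, I can pick an injection $J_n\hookrightarrow I\setminus J_n$ and lift it to a finite permutation $p_n\in\mathcal{P}_{\!I}$ made of transpositions, so that $p_n(J_n)\cap J_n=\emptyset$. Then $p_nB_n\in\mathscr{F}^{p_n(J_n)}\times\Omega^{I\setminus p_n(J_n)}$, and because $J_n$ and $p_n(J_n)$ are disjoint finite coordinate blocks, $B_n$ and $p_nB_n$ lie in independent sub-$\sigma$-fields under the product measure, giving
$$
P^I(B_n\cap p_nB_n)=P^I(B_n)\cdot P^I(p_nB_n).
$$
Using the symmetry $p_nA=A$ together with the measure-preservation $P^I=(p_n)_*P^I$ yields
$$
P^I(A\vartriangle p_nB_n)=P^I\bigl(p_n(A\vartriangle B_n)\bigr)=P^I(A\vartriangle B_n)<1/n,
$$
so both $B_n$ and $p_nB_n$ converge in measure to $A$.

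Passing to the limit then gives
$$
P^I(A)=\lim_{n\to\infty}P^I(B_n\cap p_nB_n)=\lim_{n\to\infty}P^I(B_n)\cdot P^I(p_nB_n)=P^I(A)\cdot P^I(A),
$$
forcing $P^I(A)\in\{0,1\}$. The conceptually interesting move is the choice of $p_n$ with $p_n(J_n)\cap J_n=\emptyset$, which is what allows the symmetric event to be ``self-decorrelated'' via a measure-preserving transformation even though, unlike in the Kolmogoroff case, the approximant $B_n$ need not itself live in the tail. The only step that requires mild care when $I$ is uncountable is the initial approximation by $B_n\in\mathscr{A}$; this is routine since $\mathscr{A}$ is a generating algebra for $\mathscr{F}^I$, but it is the one place where an explicit appeal to the structure of the product $\sigma$-algebra (as opposed to formal manipulation of permutations) is needed.
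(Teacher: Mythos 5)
Your proposal is correct and follows essentially the same route as the paper's own proof: approximate the symmetric event $A$ by a finite cylinder $B_n$, choose a finite permutation $p_n$ with $p_n(J_n)\cap J_n=\emptyset$ so that $B_n$ and $p_nB_n$ are independent, and pass to the limit in $P^I(B_n\cap p_nB_n)=P^I(B_n)\cdot P^I(p_nB_n)$ to get $P^I(A)=P^I(A)^2$. Your explicit remark that the approximation step rests on the finite cylinders forming a generating algebra is a useful clarification the paper leaves implicit, but it is not a substantive difference.
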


\begin{proof}
Let $A\in\mathscr{F}^I$ be any symmetric event. For all $n\in\mathbb{N}$, there exists a finite set $J_n\subset I$ and an event $B_n\in\mathscr{F}^{J_n}\times\Omega^{I\setminus J_n}$ such that $P^I(A\vartriangle B_n)<1/n$. As $J_n$ is finite and $I$ is infinite, it follows that one can choose an element $p_n\in \mathcal{P}_{\!I}$ such that $p_n(J_n)\cap J_n=\emptyset$. Thus, $B_n$ and $p_nB_n$ are independent in $(\Omega^I,\mathscr{F}^I,P^I)$. Noting that
$$
P^I(p_nB_n)=P^I(B_n)\quad \textrm{and}\quad P^I(A\vartriangle B_n)=P^I(p_n(A\vartriangle B_n))=P^I(A\vartriangle p_nB_n)\to 0\ \textrm{as }n\to\infty,
$$
it follows that
$$
P^I(A)=\lim_{n\to\infty}P^I(A\cap B_n)=\lim_{n\to\infty}P^I(B_n\cap p_nB_n)=\lim_{n\to\infty}P^I(B_n)\cdot P^I(p_nB_n)=P^I(A)\cdot P^I(A).
$$
Thus, $P^I(A)=0$ or $1$. The proof is complete.
\end{proof}

\subsection{Category analogues}
In this subsection we will consider two category analogues of Kolmogoroff's zero-one law. Meanwhile, we shall improve a classic theorem of Oxtoby (1960) \cite{O60}.
\begin{sse}[Tail sets]\label{6.2}
Let $X=\prod_{\alpha\in A}X_\alpha$. A set $E\subset X$ will be called a \textit{tail set}~\cite{O80}, if whenever $x=(x_\alpha)_{\alpha\in A}$ and $y=(y_\alpha)_{\alpha\in A}$ are points of $X$, and $x_\alpha=y_\alpha$ for all but finite number of $\alpha\in A$, then $E$ contains both $x$ and $y$ or neither.
\end{sse}

For any set $J\subset A$, finite or infinite, we shall write $X_J=\prod_{j\in J}X_j$. Then Definition~\ref{6.2} can be cast in a more convenient form as follows:
\begin{enumerate}[$\bullet$]
\item $E\subset\prod_{\alpha\in A}X_\alpha$ is a tail set if and only if for each finite set $J\subseteq A$ there is a set $B_J\subset X_{A\setminus J}$ such that $E=X_J\times B_J$.
\end{enumerate}
\begin{proof}
Indeed, sufficiency is obvious. Now conversely, suppose $E$ is a tail set and $J\subseteq A$ is a finite set. Let $B_J=\{y\in X_{A\setminus J}\,|\,\exists x_J\in X_J\textrm{ s.t. }(x_J,y)\in E\}$. Then $E=X_J\times B_J$.
\end{proof}

Subsequently, a tail event (Def.~\ref{6.1.1}) is a tail set.

\begin{sse}[Property of Baire]
A subset $E$ of a topological space is said to have the \textit{property of Baire}~\cite{O60, O80}, iff $E=G\!\vartriangle\!P$ where $G$ is open and $P$ is meager, iff $E=F\!\vartriangle\!Q$ where $F$ is closed and $Q$ is meager.

Note that a meager set has the property of Baire. Open set and closed set both have the property of Baire. In particular, if $A$ has the property of Baire, then so does its complement. In fact, the class of sets having the property of Baire is a $\sigma$-field generated by the open sets together with the meager sets \cite[Thm.~4.3]{O80}. Thus, every Borel subset of a topological space has the property of Baire.
\end{sse}

\begin{sthm}[{cf.~\cite[Thm.~4]{O60}}]\label{6.2.3}
Let $X$ be the product of a family of Baire spaces, each of which has a countable $\pi$-base. Then $X$ is a Baire space; and moreover, any tail set having the property of Baire in $X$ is either meager or residual.
\end{sthm}

Now we can generalize Theorem~\ref{6.2.3} from the class of Baire spaces to the class of arbitrary topological spaces as follows:

\begin{sthm}\label{6.2.4}
Let $X=\prod_{\alpha\in A}X_\alpha$ where each factor has a countable $\pi$-base. Then any tail set having the property of Baire in $X$ is either meager or residual.
\end{sthm}

\begin{proof}
Let $E$ be any tail set having the property of Baire in $X$. Suppose $E$ is not residual in $X$; and so, $X\setminus E$ is non-meager and has the property of Baire. Then there exists an open non-void set $G$ non-meager and a set $P$ meager in $X$ such that $X\setminus E=G\!\vartriangle\!P$. Let $\{G_i\}$ be a maximal disjoint family (countable by Lem.~\ref{5.4.3}) of basic open sets contained in $G$. Then $\bigcup_iG_i$ is dense open in $G$ so that $G\setminus \bigcup_iG_i$ is nowhere dense. Since $G$ is non-meager, $\bigcup_iG_i$ is non-meager so that at least one of the sets $G_i$ is non-meager, say $G_i=U\times X_{A\setminus J}$, where $J\subseteq A$ is some finite set and $U\in\mathscr{O}(X_J)$. So, $U$ is non-meager in $X_J$. By Definition~\ref{6.2}, $E=X_J\times B$ for some set $B\subset X_{A\setminus J}$. Hence $E\cap G_i=(U\cap X_J)\times(X_{A\setminus J}\cap B)=U\times B$. As $E\cap G_i\subseteq E\cap G=G\cap P\subseteq P$ and $P$ is meager, it follows that $U\times B$ is meager; and so, $B$ is meager in $X_{A\setminus J}$ by Theorem~A.3. Thus, $E$ is meager by Theorem~A.3 again. The proof is complete.
\end{proof}

In view of Lemma~\ref{5.2.3}, Lemma~\ref{6.2.5} below may be thought of as a variant of the Kuratowski-Ulam-Sikorski theorem (Thm.~A.3), which gives us an equivalent description of $A\times B$ being meager in $X\times Y$.

\begin{slem}\label{6.2.5}
Let $X$ and $Y$ be spaces at least one of which is a separable $W$-space of G. Let $A\subseteq X$ and $B\subseteq Y$. Then $A\times B$ is meager in $X\times Y$ if and only if either $A$ or $B$ is meager in $X$ or $Y$.
\end{slem}

\begin{proof}
Letting $\mathcal{F}=\{Y\}$ be a rich family for $Y$ if $Y$ is a separable $W$-space of G, by Lemma~A.8 and a modification of Proof of Theorem~A.3, it follows that if $A\times B$ is meager in $X\times Y$ and $A$ is non-meager in $X$, then $B$ must be meager in $Y$.
\end{proof}

\begin{sthm}\label{6.2.6}
Let $X=\prod_{\alpha\in A}X_\alpha$ such that each factor is a separable $W$-space of G. Then any tail set having the property of Baire is either meager or residual in $X$.
\end{sthm}

\begin{proof}
By Lemma~\ref{6.2.5} in place of Theorem~A.3, the statement follows easily from Proof of Theorem~\ref{6.2.4}.
\end{proof}

We note that neither of Theorems~\ref{6.2.4} and \ref{6.2.6} includes the other because of the lack of the quasi-regularity (see Lem.~\ref{5.2.3}).

\begin{srem}\label{6.2.7}
Let $(X,\mathscr{B},P)$ be a Borel probability space such that $P(U)>0$ for all $U\in\mathscr{O}(X)$ and $I$ an infinite index set. \textit{If $E\in\mathscr{B}^I$ is a symmetric set and has the property of Baire, is $E$ either meager or residual in $X^I$ and $P^I(E)=1$ $\Leftrightarrow$ $E$ being residual?}
\end{srem}
%%%%%%%%%%%%%%%%%%%%%%%%%%%%%%%%%%
\section{Non-meagerness of g$\mathcal{N}$-spaces}\label{s7}
This section will be mainly devoted to proving the sufficiency part of Theorem~\ref{1.3}-(2b) and Theorem~\ref{1.3}-(7) stated in $\S$\ref{1.3}. See Theorems~\ref{7.8}, \ref{7.3} and \ref{7.9}.

Recall that $X$ is a completely regular space (or a uniform space \cite{K55}) iff for all $x\in X$ and $U\in\mathfrak{N}_x(X)$, there exists a continuous function $f\colon X\rightarrow[0,1]$ such that $f(x)=0$ and $f\!\upharpoonright_{X\setminus U}\equiv1$.
In (1983) \cite{Ch83} Christensen conjectured that any metrizable $\mathcal{N}$-space is Baire. In fact, it is true in the category of completely regular spaces.

\begin{thm}[{cf.~\cite[Thm.~3]{SR83}}]\label{7.1}
Let $X$ be a completely regular space. If $X$ is an $\mathcal{N}$-space, then $X$ is Baire.
\end{thm}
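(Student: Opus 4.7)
The plan is to argue by contrapositive: assume $X$ is completely regular but not Baire and construct a compact Hausdorff space $Y$ together with a separately continuous $f\colon X\times Y\to[0,1]$ for which the set of points of joint continuity of $f$ fails to be dense in $X\times Y$, contradicting the $\mathcal{N}$-property.

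Non-Baireness of $X$ supplies a nonempty open set $U\subseteq X$ of first category; write $U=\bigcup_{n\ge 1}F_n$ with $F_n\subseteq F_{n+1}$ and each $F_n$ closed nowhere dense in $X$. Using complete regularity, for each $n\in\mathbb{N}$ and each $x\in U\setminus F_n$ I select a continuous cutoff $h_{n,x}\colon X\to[0,1]$ with $h_{n,x}(x)=1$ and $h_{n,x}\equiv 0$ on the closed set $F_n\cup(X\setminus U)$. This family enjoys two crucial features. First, for every fixed $x^{*}\in X$ one has $h_{n,x}(x^{*})=0$ for all $n$ beyond some $N(x^{*})$: set $N(x^{*})=1$ when $x^{*}\notin U$, and otherwise pick the least $N$ with $x^{*}\in F_{N}$. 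Second, since $F_n$ is nowhere dense, $U\setminus F_n$ is dense in $U$ for every $n$, so near every $x_0\in U$ there are points $x'\in U\setminus F_n$ with $h_{n,x'}(x')=1$.

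I then assemble these cutoffs into a compact space $Y=D\cup\{y_\infty\}$, where $D=\{(n,x)\colon n\in\mathbb{N},\ x\in U\setminus F_n\}$ consists of isolated points and a subbase of neighborhoods of $y_\infty$ is formed by the sets $\{y_\infty\}\cup\{(n,x)\in D\colon h_{n,x}(x^{*})<\varepsilon\}$ as $x^{*}\in X$ and $\varepsilon>0$ vary. Defining $f(x,(n,x'))=h_{n,x'}(x)$ and $f(x,y_\infty)=0$, continuity of $f$ in the first variable is automatic since each $h_{n,x'}\in C(X,[0,1])$, while continuity in the second variable at $y_\infty$ is built into the topology. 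Joint discontinuity at every $(x_0,y_\infty)$ with $x_0\in U$ follows from the density above: one constructs a net $(x_\alpha,(n_\alpha,x'_\alpha))$ with $x_\alpha=x'_\alpha\to x_0$ and $(n_\alpha,x'_\alpha)\to y_\infty$ in $Y$ such that $f(x_\alpha,(n_\alpha,x'_\alpha))=h_{n_\alpha,x'_\alpha}(x'_\alpha)=1\not\to 0=f(x_0,y_\infty)$. Consequently any set $J\subseteq X$ of points where $f$ is jointly continuous on $\{x\}\times Y$ must miss the nonempty open set $U$ and hence fails to be dense, contradicting the $\mathcal{N}$-property.

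The principal obstacle is verifying that $Y$ is genuinely compact: because $D$ is typically uncountable, a naive cofinite topology at $y_\infty$ does not suffice. The delicate step is to sharpen the choice of the $h_{n,x}$'s using complete regularity a second time, localizing the support of each $h_{n,x}$ to a small open neighborhood of $x$ (shrinking with $n$) and arranging these supports into a point-finite family, so that for every $x^{*}\in X$ and every $\varepsilon>0$ the set $\{(n,x)\in D\colon h_{n,x}(x^{*})\ge\varepsilon\}$ is finite. Once this finiteness holds, any open cover of $Y$ admits a finite subcover—one member covers $y_\infty$ and leaves only finitely many isolated points of $D$ to cover—so $Y$ is compact Hausdorff, and the rest of the argument runs as outlined.
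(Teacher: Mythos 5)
Your overall strategy is the right one and is essentially Saint-Raymond's (and the paper's): assuming $X$ is not Baire, fix a non-void open $U$ of first category and build a compact Hausdorff $Y$ together with a separately continuous $f\colon X\times Y\to[0,1]$ that fails to be jointly continuous somewhere on $\{x\}\times Y$ for every $x\in U$, so that no dense set can satisfy the Namioka conclusion. Your separate-continuity checks and the final reduction are fine. The gap is in your last paragraph, and it is not a technicality: the two properties you need of the family $\{h_{n,x}\}$ pull against each other, and you never show they can be achieved simultaneously. To make $Y$ compact while keeping each $f_x$ continuous at $y_\infty$ you need $\{(n,x)\in D\colon h_{n,x}(x^*)\ge\varepsilon\}$ to be finite for every $x^*$ and $\varepsilon$; your proposed route is to localize supports into a point-finite (in practice, level-by-level pairwise disjoint) family, which forces you to thin the index set $D$. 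But your joint-discontinuity argument at $(x_0,y_\infty)$ takes $x_\alpha=x'_\alpha\to x_0$ with $h_{n_\alpha,x'_\alpha}(x'_\alpha)=1$, so it needs the set of peak points $\{x'\colon (n,x')\in D,\ n\ge N\}$ to be dense in $U$ for every $N$. Once the level-$n$ supports are pairwise disjoint, each level-$n$ peak has an open neighborhood containing no other level-$n$ peak, so the peaks form only a $\sigma$-relatively-discrete set; in a $T_1$ first-category space no single level can have dense peaks, and there is no reason a countable union of such sets is dense in an arbitrary completely regular $U$. Weakening ``$h=1$'' to ``$h\ge 1/2$'' does not help: maximality of a disjoint family of cozero sets makes $\bigcup_i\{h_{n,i}>0\}$ dense, not $\bigcup_i\{h_{n,i}>1/2\}$. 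So either $Y$ is not compact, or the discontinuity net does not exist; as written you cannot have both.

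This is precisely the difficulty that Lemma~\ref{7.2} (Saint-Raymond's Lemma 4, proved in \ref{B}) is designed to circumvent. There one takes a maximal family $\{\varphi_i\}_{i\in\Lambda}$ of functions vanishing on the nowhere dense set, with pairwise disjoint supports and dense union of supports, lets $Y$ be the one-point compactification of $\Lambda\times[0,1]$, and sets $f(x,(i,t))=S(\varphi_i(x),t)$ with the Schwartz function $S(s,t)=2st/(s^2+t^2)$. The extra coordinate $t$ can be tuned to the possibly tiny value $t_j=\varphi_{i_j}(x_j)>0$, giving $f(x_j,(i_j,t_j))=1$ along a net $x_j\to x_0$; thus only density of $\bigcup_i\{\varphi_i>0\}$ is needed, and that is exactly what maximality provides. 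If you replace your $Y$ by this one --- applied to each $F_n$ and then assembled, either over the product $\prod_nY_n$ as in the proof of Theorem~\ref{7.3} or into a single compactification as you intend --- your argument closes.
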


\begin{lem}[{cf.~\cite[Lem.~4]{SR83}}]\label{7.2}
Let $X$ be completely regular and $F\subset X$ a nowhere dense set. Then there exists a compact Hausdorff space $Y$ and a separately continuous function $f\colon X\times Y\rightarrow[0,1]$ such that for each $x\in F$, there is a point $y\in Y$ such that $f$ is discontinuous at $(x,y)$.
\end{lem}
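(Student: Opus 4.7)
The plan is to reduce to the case where $F$ is closed (replacing $F$ by $\bar{F}$, which is again nowhere dense), and then build $Y$ as a compact Hausdorff index set for a family of continuous ``bump'' functions on $X$ that vanish on $F$; the function $f$ will be an evaluation of the form $f(x,y)=y(x)$.

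First I would invoke complete regularity twice, together with the nowhere-density of $F$: for each pair $(x,V)$ with $x\in F$ and $V\in\mathfrak{N}_x^{\textrm{o}}(X)$, pick $z_{x,V}\in V\setminus F$, and obtain a continuous $\phi_{x,V}\colon X\to[0,1]$ with $\phi_{x,V}(z_{x,V})=1$, $\phi_{x,V}|_F\equiv 0$, and $\mathrm{supp}\,\phi_{x,V}\subseteq V$ (the second application of complete regularity separates $z_{x,V}$ from $X\setminus V$ and is used to multiplicatively truncate the first function). In particular $\phi_{x,V}(x)=0$, so each $\phi_{x,V}$ oscillates by~$1$ near $x$.

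Next I would assemble these bumps into a compact Hausdorff $Y$ containing a distinguished point $0$ representing the zero function. The most natural first candidate is the closure in $[0,1]^X$ (product topology) of $\{\phi_{x,V}\}\cup\{0\}$, which is automatically compact Hausdorff. Setting $f(x,y):=y(x)$, separate continuity in the second variable is immediate from the product topology (it is a coordinate projection); separate continuity in the first variable is clear on the $\phi_{x,V}$'s and on $0$, but may fail on other limit points of the closure---this is the main technical obstacle, since pointwise limits of continuous functions need not be continuous. To circumvent it I would replace the naive closure by a more coordinated compactification: for each fixed $x_0\in F$, the shrinking net $\{\phi_{x_0,V}\}_V$ converges pointwise to~$0$ in $[0,1]^X$, because $\phi_{x_0,V}(x_0)=0$ always, and for $x'\ne x_0$ the support condition forces $\phi_{x_0,V}(x')=0$ as soon as $V\not\ni x'$. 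One then builds $Y$ as an Alexandroff-type compactification of the disjoint family $\{(\phi_{x_0,V})_V\}_{x_0\in F}$ identified at the single additional point~$0$, and checks by inspection that $Y$ is compact Hausdorff and that all its non-isolated points correspond to the continuous zero function, so that $f(\cdot,y)$ is continuous for every $y\in Y$. This essentially reproduces the construction of \cite[Lem.~4]{SR83}.

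Granting this $Y$, the discontinuity of $f$ at $(x_0,0)$ for each $x_0\in F$ is a direct computation: the net $(z_{x_0,V},\phi_{x_0,V})\to(x_0,0)$ in $X\times Y$ by the convergences above, yet $f(z_{x_0,V},\phi_{x_0,V})=\phi_{x_0,V}(z_{x_0,V})=1\not\to 0=f(x_0,0)$, which furnishes the required point of discontinuity over each $x_0\in F$. The hard part of the argument is thus entirely concentrated in the compactification step of the third paragraph; everything else is routine assembly.
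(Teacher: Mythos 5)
Your reduction to closed $F$, your use of complete regularity to produce bumps vanishing on $F$, and your final discontinuity computation are all fine; the gap is exactly where you locate "the hard part," and the fix you sketch does not work. If $Y$ is an Alexandroff-type compactification in which every bump $\phi_{x,V}$ is isolated, then compactness forces the neighborhoods of the extra point $0$ to be cofinite, and continuity of $f_x=f(x,\cdot)$ \emph{at the point $0\in Y$} requires that for each fixed $x\in X$ and each $\varepsilon>0$ only finitely many bumps satisfy $\phi_{x',V}(x)\ge\varepsilon$. Nothing in your construction guarantees this: every $\phi_{x',V}$ attains the value $1$ at some $z_{x',V}\in V\setminus F$, the index family $\{(x',V)\}$ is typically infinite even for a single $x'\in F$ with nested $V$'s, and infinitely many of these bumps can be large at a common point of $X\setminus F$ (indeed the $z_{x',V}$ may even coincide, since you impose no constraint on their choice). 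You verify only continuity of $f(\cdot,y)$ in the first variable ("all non-isolated points of $Y$ correspond to the zero function"), but it is continuity in the \emph{second} variable at $y=0$ that breaks. The alternative reading of your construction, in which each net $(\phi_{x_0,V})_V$ gets its own limit glued at $0$ with tail neighborhoods, is not compact, since the complement of such a neighborhood is an infinite discrete set.

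The paper's proof (following Saint-Raymond) is built precisely to evade this dilemma, and it is not what you describe: one takes a \emph{maximal family} $\{\varphi_i\}_{i\in\Lambda}$ of bumps vanishing on $F$ with \emph{pairwise disjoint cozero sets} whose union $\varOmega$ is dense, so that each $x\in X$ meets at most one bump; then $Y$ is the one-point compactification of $\Lambda\times[0,1]$ ($\Lambda$ discrete) and $f(x,(i,t))=S(\varphi_i(x),t)$ with $S$ the Schwartz function, $f(x,\infty)=0$. Disjointness makes $f(x,\cdot)$ supported in a single compact leaf $\{i\}\times[0,1]$, giving continuity at $\infty$; and the Schwartz function together with the extra $[0,1]$ coordinate converts the possibly tiny positive values $t_j=\varphi_{i_j}(x_j)$ at points $x_j\in\varOmega$, $x_j\to x\in F$, into $f(x_j,(i_j,t_j))=S(t_j,t_j)=1$, which is what your normalization $\phi_{x,V}(z_{x,V})=1$ was trying to achieve but cannot once the supports are forced to be disjoint. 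In short: either you keep your large overlapping family and lose separate continuity on any compact $Y$, or you pass to a disjoint family and lose the unit oscillation — and the Schwartz device is the ingredient that reconciles the two. Your proof as written is missing it.
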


Lemma~\ref{7.2} plays an important role in Saint-Raymond's proof of Theorem~\ref{7.1}. It will be still useful for us to prove Theorem~\ref{7.3} below; and so, we shall present its proof in \ref{B} for reader's convenience.

\begin{thm}\label{7.3}
Let $X$ be completely regular. If $X$ is a g$\mathcal{N}$-space, then $X$ is non-meager in itself.
\end{thm}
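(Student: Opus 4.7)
The plan is to argue by contradiction: assume $X$ is completely regular and meager, yet g.$\mathcal{N}$, and build a single separately continuous function $f\colon X\times Y\to [0,1]$ with $Y$ compact Hausdorff, such that for \emph{every} $x\in X$ there is some $y\in Y$ at which $f$ fails to be jointly continuous. This directly contradicts the g.$\mathcal{N}$-property. The strategy parallels Saint-Raymond's \cite[Thm.~3]{SR83} argument underlying Theorem~\ref{7.1}, but must be adapted: we cannot afford a generic point of joint continuity to fail on a dense set—we must exhaust every point of $X$.

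First I would write $X=\bigcup_{n\in\mathbb{N}}F_n$ with each $F_n$ closed nowhere dense. By Lemma~\ref{7.2} applied to each $F_n$, there exists a compact Hausdorff space $Y_n$ and a separately continuous function $f_n\colon X\times Y_n\to [0,1]$ such that, for every $x\in F_n$, the function $f_n$ is discontinuous at some point of $\{x\}\times Y_n$. Next I would set
$$
Y={\prod}_{n\in\mathbb{N}}Y_n,
$$
which is compact Hausdorff by Tychonoff's theorem, fix base points $p_n\in Y_n$, and define
$$
f\colon X\times Y\rightarrow [0,1],\qquad f(x,y)={\sum}_{n\in\mathbb{N}}2^{-n}f_n(x,y_n)\quad\bigl(y=(y_n)_{n\in\mathbb{N}}\bigr).
$$
Since the partial sums converge uniformly (the $n$th term is bounded by $2^{-n}$), separate continuity of each $f_n$ yields separate continuity of $f$: for fixed $x$, each $f_n(x,\cdot)$ is continuous on $Y_n$, hence $f(x,\cdot)$ is continuous on $Y$; likewise for fixed $y$, each $f_n(\cdot,y_n)$ is continuous on $X$ and the uniform limit is continuous.

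Then I would verify that for every $x\in X$ there is $y\in Y$ at which $f$ is not jointly continuous. Pick $n$ with $x\in F_n$, and choose $y_n^*\in Y_n$ so that $f_n$ is discontinuous at $(x,y_n^*)$; set $y\in Y$ by $y_n=y_n^*$ and $y_m=p_m$ for $m\neq n$. By the choice of $y_n^*$ there exist $\varepsilon>0$ and a net $(x_\alpha,z_\alpha)\to (x,y_n^*)$ in $X\times Y_n$ with $|f_n(x_\alpha,z_\alpha)-f_n(x,y_n^*)|\ge \varepsilon$. Extend it to a net $(x_\alpha,y_\alpha)\to (x,y)$ in $X\times Y$ by $y_{\alpha,n}=z_\alpha$ and $y_{\alpha,m}=p_m$ for $m\neq n$. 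For $m\neq n$, continuity of $f_m(\cdot,p_m)$ gives $f_m(x_\alpha,p_m)\to f_m(x,p_m)$; by dominated summation the tail ${\sum}_{m\neq n}2^{-m}(f_m(x_\alpha,p_m)-f_m(x,p_m))$ tends to $0$. Hence
$$
\liminf_\alpha |f(x_\alpha,y_\alpha)-f(x,y)|\ge 2^{-n}\varepsilon>0,
$$
so $f$ is discontinuous at $(x,y)\in\{x\}\times Y$. Applying the g.$\mathcal{N}$-property to $f$ would produce some $x\in X$ at which $f$ is jointly continuous on all of $\{x\}\times Y$, contradicting what was just shown. Therefore $X$ cannot be meager.

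The main obstacle I anticipate is the uniform bookkeeping of the tail terms when verifying discontinuity at $(x,y)$: the off-coordinate summands do not automatically vanish, and one must use the separate continuity of the $f_m$ ($m\neq n$) together with the uniform bound $\|f_m\|_\infty\le 1$ to absorb them. Once that is done cleanly, the rest is a direct application of Lemma~\ref{7.2} and Tychonoff. A small cosmetic point: replacing each $f_n$ by $\tfrac{1}{2}(1+f_n)$ or a similar normalization is unnecessary since the definition above already takes values in $[0,1]$.
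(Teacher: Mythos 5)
Your proof is correct and follows essentially the same route as the paper's: write the meager space as $X=\bigcup_{n}F_n$ with each $F_n$ nowhere dense, apply Lemma~\ref{7.2} to each $F_n$ to obtain $(Y_n,f_n)$, and pass to the compact Hausdorff product $Y=\prod_{n}Y_n$. The only difference is the final aggregation: the paper bundles the $f_n$ into a map $f\colon X\times Y\rightarrow[0,1]^{\mathbb{N}}$, $(x,y)\mapsto(f_n(x,y_n))_{n}$, whereas your weighted series $\sum_{n}2^{-n}f_n(x,y_n)$ yields a genuinely real-valued function --- so you invoke the g.$\mathcal{N}$-property exactly as stated in Definition~\ref{1.1}a rather than for Hilbert-cube-valued maps --- and your tail estimate justifying the discontinuity at $(x,y)$ is correct.
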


\begin{proof}
Suppose to the contrary that $X$ is meager in itself. Then there exists a sequence of nowhere dense closed sets, $\{F_n\}_{n=1}^\infty$, such that $X=\bigcup_nF_n$. By Lemma~\ref{7.2}, we have for each $n\in\mathbb{N}$ that there is a separately continuous function $f_n\colon X\times Y_n\rightarrow[0,1]$ such that $Y_n$ is a compact Hausdorff space and that for each $x\in F_n$ there exists a point $y\in Y_n$ such that $f_n$ is discontinuous at $(x,y)$.
Let $Y=\prod_nY_n$ be the product topological space. Then $Y\times[0,1]^\mathbb{N}$ is a compact Hausdorff space. Now we can define a separately continuous function
$f\colon X\times Y\rightarrow[0,1]^\mathbb{N}$ by $(x,(y_i)_{i\in\mathbb{N}})\mapsto(f_i(x,y_i))_{i\in\mathbb{N}}$.
Then for all $x\in X$, there exists some $n\in\mathbb{N}$ with $x\in F_n$; and so, there exists a point in $\{x\}\times Y$ at which $f$ is not jointly continuous. Let $d$ be a compatible metric for $[0,1]^\mathbb{N}$. Next, define a separately continuous function
$\tilde{f}\colon X\times\left(Y\times[0,1]^\mathbb{N}\right)\rightarrow\mathbb{R}$ by $(x,(y,w))\mapsto d(f(x,y),w)$.
Let $x_0\in X$ be arbitrary. Then there is a point $y_0=y(x_0)\in Y$ such that $f$ is discontinuous at $(x_0,y_0)$. Let $w_0=f(x_0,y_0)$. Then $\tilde{f}$ is discontinuous at $(x_0,(y_0,w_0))$,
contrary to that $X$ is a g$\mathcal{N}$-space.
\end{proof}

It turns out that if $X$ is a completely regular $T_1$-space, then the Stone-\v{C}ech compactification $\beta X$ is well defined (cf.~\cite[Thm.~5.24]{K55}); and further, Theorem~\ref{7.3} follows readily from the following:

\begin{thm}[{cf.~\cite[Prop.~4.1]{BP05}}]
Let $X$ be a meager, completely regular, $T_1$-space. Then there exists a separately continuous function $\phi\colon X\times\beta X\rightarrow[0,1]$ such that $\phi\!\upharpoonright_\varDelta\colon\varDelta\rightarrow[0,1]$ is discontinuous at each point of $\varDelta=\{(x,x)\,|\,x\in X\}$.
\end{thm}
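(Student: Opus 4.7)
The plan is to decompose $X$ into countably many closed nowhere dense pieces and to apply (a refinement of) Lemma~\ref{7.2} on each piece, with the compact Hausdorff second factor chosen to be $\beta X$ itself so that the discontinuities are placed on the diagonal. Since $X$ is of first category, I write $X=\bigcup_{n\in\mathbb{N}}F_n$ with each $F_n$ closed nowhere dense in $X$, and I may assume $F_n\subseteq F_{n+1}$. The end product will be $\phi=\sum_{n=1}^\infty c_n\phi_n$ for suitable weights $c_n>0$ (e.g., $c_n=4^{-n}$), where each $\phi_n\colon X\times\beta X\to[0,1]$ is separately continuous, is jointly continuous at $(x,x)$ for every $x\in X\setminus F_n$, and whose restriction to $\varDelta$ has oscillation $\ge 1/2$ at every point $(x,x)$ with $x\in F_n$. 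Uniform convergence of the series gives separate continuity of $\phi$, and the geometric decay of the weights dominates the tail so that the oscillation of $\phi|_{\varDelta}$ at each $x\in F_n\setminus F_{n-1}$ is bounded below by a positive constant (roughly $c_n/6$), making $\phi|_{\varDelta}$ discontinuous at every point of $X=\bigcup_n F_n$.

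The construction of $\phi_n$ is the core of the argument and is where the specific choice $Y=\beta X$ is crucial. For each $x\in F_n$, nowhere density of $F_n$ yields a net $\{x^{(V)}\}_V$ in $X\setminus F_n$ indexed by $V\in\mathfrak{N}_x^\textrm{o}(X)$ and directed by reverse inclusion, with $x^{(V)}\to x$ in $X$; since $X\hookrightarrow\beta X$ is a topological embedding, this net also converges to $x$ in $\beta X$, so $(x^{(V)},x^{(V)})\to(x,x)$ along $\varDelta$. Complete regularity furnishes, for each such $x^{(V)}$, a continuous function $g\colon X\to[0,1]$ with $g|_{F_n}\equiv0$ and $g(x^{(V)})=1$, whose Stone--\v{C}ech extension $\hat g\colon\beta X\to[0,1]$ satisfies $\hat g(x^{(V)})=g(x^{(V)})=1$. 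Adapting the proof of Lemma~\ref{7.2} (which is carried out in~\ref{B}) with $Y=\beta X$ assembles such pairs $(g,\hat g)$ into a single separately continuous $\phi_n$ on $X\times\beta X$ with $\phi_n(x,x)=0$ for $x\in F_n$ and $\phi_n(x^{(V)},x^{(V)})\ge 1/2$ along a cofinal subfamily of $V$'s, thereby exhibiting the required diagonal discontinuity at each $(x,x)$ with $x\in F_n$.

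The main obstacle is preserving separate continuity of $\phi_n$ while forcing the discontinuity to land exactly on $\varDelta$ rather than at an off-diagonal point $(x,y)\in X\times\beta X$, and simultaneously keeping $\phi_n$ jointly continuous off $F_n\times\beta X$ so that the tail estimate used above actually goes through. A naive choice such as $\phi_n(x,y)=\sup_g g(x)\hat g(y)$ over $g$ as above is only lower semicontinuous in each variable, and is therefore unsuitable. The correct approach is to invoke the explicit construction in the proof of Lemma~\ref{7.2} with the specific choice $Y=\beta X$, exploiting the universal property of $\beta X$: the approximating net $\{x^{(V)}\}_V$, which witnesses the closeness to $x$ in $X$, automatically witnesses the same closeness in $\beta X$, so that the discontinuity recorded by Lemma~\ref{7.2} at a point $(x,y_x)$ aligns with the diagonal point $(x,x)\in\varDelta$. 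This alignment, combined with the mild regularity of the explicit construction away from the bad set $F_n$, produces the $\phi_n$ required for the summation to yield the desired $\phi$.
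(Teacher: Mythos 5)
First, a point of reference: the paper itself gives no proof of this theorem — it is quoted from Burke--Pol \cite[Prop.~4.1]{BP05} and used only to re-derive Theorem~\ref{7.3} — so your argument can only be measured against that source and against Lemma~\ref{7.2}, whose Appendix~\ref{B} proof you lean on. Your outer scheme (increasing closed nowhere dense $F_n$, weights $4^{-n}$, and the oscillation bookkeeping giving a lower bound of about $c_n/6$) is sound, but it is the easy part. The gap is that the object carrying all of the content, $\phi_n$, is never constructed. You rightly reject $\sup_g g(x)\hat g(y)$, but your substitute — ``invoke the explicit construction in the proof of Lemma~\ref{7.2} with the specific choice $Y=\beta X$'' — is not an available move: that proof manufactures its own compact factor, the one-point compactification of $\Lambda\times[0,1]$, and feeds the free real coordinate $t$ of that space into the Schwartz function via $f(x,(i,t))=S(\varphi_i(x),t)$. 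There is no slot into which $\beta X$ can be substituted, and the discontinuity that proof produces sits at a point $(x,y_x)$ where $y_x$ is a cluster point of the indices $(i_j,t_j)$ in the auxiliary space; nothing identifies $y_x$ with the point $x\in\beta X$. Likewise the functions $g$ you extract from complete regularity, one for each pair $(x,V)$, carry no disjointness structure, so there is no mechanism for ``assembling'' them into a single separately continuous $\phi_n$ — that assembly is precisely where the theorem lives.

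The missing device is to put the \emph{same} function into both slots of the Schwartz function over a disjoint cozero family. Take $\{g_i\}_{i\in\Lambda}\subseteq C(X,[0,1])$ with $g_i|_{F_n}\equiv0$, $g_ig_j\equiv0$ for $i\ne j$, and $\Omega_n=\bigcup_i\{g_i>0\}$ dense open, exactly as in the proof of Lemma~\ref{7.2}, and set $\phi_n(x,y)=\sum_{i\in\Lambda}S\bigl(g_i(x),\hat g_i(y)\bigr)$, where $\hat g_i\in C(\beta X,[0,1])$ extends $g_i$. The relations $\hat g_i\hat g_j\equiv0$ persist on $\beta X$, so for each fixed $x$ (resp.\ fixed $y$) at most one summand is not identically zero, which yields separate continuity; on the diagonal $\phi_n(x,x)$ is the indicator of $\Omega_n$, hence has oscillation $1$ at every point of $X\setminus\Omega_n\supseteq F_n$ and is locally constant on $\Omega_n$. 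Note this gives continuity of $\phi_n|_\varDelta$ only on $\Omega_n$, not on all of $X\setminus F_n$ as your tail estimate assumes; the bookkeeping still closes because $\bigcap_n\Omega_n\subseteq\bigcap_n(X\setminus F_n)=\emptyset$, so each $x$ admits a least $n$ with $x\notin\Omega_n$, the terms with smaller index being locally constant at $x$. Without this (or an equivalent) construction your proposal reduces the theorem to an unproved claim about the existence of the $\phi_n$.
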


It is well known that even in the realm of completely regular $T_1$-spaces, a Choquet space need not be an $\mathcal{N}$-space; see Talagrand (1985) \cite[Thm.~2]{T85} that solves a question of Namioka (\cite[Remark~1.3-(b)]{N74}). Haydon 1999 proved that there are Choquet spaces $B$ and compact scattered spaces $K$ such that $\langle B,K\rangle$ are not Namioka pairs. In addition, Burke-Pol (2005) \cite[Thm.~1.1]{BP05} showed that there is a Choquet completely regular $T_1$-space $B$ and a separately continuous function $f\colon B\times\beta B\rightarrow\mathbb{R}$ such that the set of points of continuity of $f\!\upharpoonright_\varDelta\colon\varDelta\rightarrow\mathbb{R}$ is not dense in the space $\varDelta=\{(b,b)\,|\,b\in B\}$; and so, $\langle B,\beta B\rangle$ is not a weak-Namioka pair.

In fact, a Choquet space and so a non-meager space, need not be a g$\mathcal{N}$-space as shown by the following example which is due to Talagrand, but our new ingredient is \ref{7.5}-(3).

\begin{ex}\label{7.5}
Let $T$ be an uncountable discrete space, $\mathscr{J}$ the family of countable non-void subsets of $T$, and $\beta T$ the Stone-\v{C}ech compactification of $T$.
Let $Y=\beta T\setminus T$ and we define
$$
\varPsi=\{p\in\beta T\,|\,T\cap U\notin\mathscr{J}\ \forall U\in\mathfrak{N}_p(\beta T)\textrm{ clopen}\}.
$$
Then $\varPsi\not=\emptyset$ is closed. Indeed, if $\varPsi=\emptyset$, then for all $p\in\beta T$ there is a clopen set $U_p\in\mathfrak{N}_p(\beta T)$ such that $T\cap U_p$ is countable dense in $U_p$; however, since $\beta T$ is compact, there is a countable set $J\subset T$ with $\bar{J}=\beta T$, contrary to that $T$ is uncountable, discrete, and open in $\beta T$.
Let
$$
X=\left\{x\in\{0,1\}^T\,|\,\{t\in T\colon x(t)=1\}\in\mathscr{J}\right\}.
$$
Given $x\in X$, let $x^\beta\colon \beta T\rightarrow\{0,1\}$ be the unique continuous extension of $x\colon T\rightarrow\{0,1\}$.
Let
$$
f\colon X\times\beta T\rightarrow\{0,1\},\quad (x,y)\mapsto f(x,y)=x^\beta(y)
$$
be the canonical evaluation map. Let
$U(x,J)=\{x^\prime\in X\,|\,x\!\upharpoonright_J={x^\prime}\!\upharpoonright_J\}$ for all $x\in X$ and $ J\in\mathscr{J}$.
Then $\{U(x,J)\,|\,x\in X\ \&\ J\in\mathscr{J}\}$ forms a base of some topology $\mathfrak{T}$ for $X$ (cf.~\cite[Thm.~1.11]{K55}). Then, under the topology $\mathfrak{T}$:
\begin{enumerate}[(1)]
\item[(1)] $X$ is completely regular, Hausdorff, $\alpha$-favorable of BM (so Baire);
\item[(2)] $f\colon X\times\beta T\rightarrow\{0,1\}$ is separately continuous;
\item[(3)] $f\colon X\times Y\rightarrow\{0,1\}$ is separately continuous but discontinuous at any point of $X\times\varPsi$.
\end{enumerate}
Consequently, $X$ is not a g$\mathcal{N}$-space (cf.~Talagrand 1985 \cite[Prob.~3]{T85}).
\end{ex}

\begin{proof}
(1): Since $U(x,J)$ is clopen in $X$ for all $x\in X$ and $J\in\mathscr{J}$, $X$ is completely regular. Given $x\not=y$ in $X$ there is an element $j\in T$ such that $x(j)\not=y(j)$. Let $J=\{j\}$ then $x\in U(x,J)$, $y\in U(y,J)$ and $U(x,J)\cap U(y,J)=\emptyset$. Thus, $X$ is a Tychonoff (completely regular Hausdorff) space.
Next, we claim that $X$ is $\alpha$-favorable of BM. Indeed, assume Player $\beta$ firstly plays $U_1$, then we can choose a set $J_1\in\mathscr{J}$ and $x_1\in X$ with $U(x_1,J_1)\subseteq U_1$ and Player $\alpha$ plays $V_1=U(x_1,J_1)$. At the $n$th-stroke, when Player $\beta$ has played $\{U_k\}_{k=1}^n$, we can choose a set $J_n\in\mathscr{J}$ and a point $x_n\in X$ such that $U(x_n,J_n)\subseteq U_n$ and then Player $\alpha$ plays $V_n=U(x_n,J_n)$. Inductively, we have constructed a BM($X$)-play $\{(U_i,V_i)\}_{i=1}^\infty$. Let $J=\bigcup_{n=1}^\infty J_n$; then $\{0,1\}^J$ is compact Hausdorff. Since $U(x_n,J_n)|_J\cap\{0,1\}^J$ is a closed set in $\{0,1\}^J$, $\bigcap_nU_n=\bigcap_nV_n\not=\emptyset$. Thus, $X$ is $\alpha$-favorable of BM so that $X$ is Baire.

(2): Clearly, $f_x=x^\beta\colon\beta T\rightarrow\{0,1\}$ is a continuous function for each $x\in X$. If $y\in T$, then $f(x,y)=x(y)$ is obviously continuous in $x\in X$. If $y\in\beta T\setminus T$ and $\{x_\lambda\}$ a net with $x_\lambda\to x$ in $X$, then there is a net $\{t_\alpha\,|\,\alpha\in D\}$ in $T$ such that $t_\alpha\to y$ in $\beta T$ and
$f(x,y)=\lim_\alpha x(t_\alpha)$ and $f(x_\lambda,y)=\lim_\alpha x_\lambda(t_\alpha)$. Further, $x_\lambda\in U(x,J)$ and so $x^\beta(y)=x_\lambda^\beta(y)$ eventually if $\exists\,\alpha_1\in D$ s.t. $\{t_\alpha\,|\,\alpha\ge \alpha_1\}\in\mathscr{J}$; and moreover, $f(x,y)=0=f(x_\lambda,y)$ for all $\lambda$ if $\{t_\alpha\,|\,\alpha\ge\alpha_1\}\notin\mathscr{J}$ for all $\alpha_1\in D$. Anyway, $f^y$ is continuous for all $y\in\beta T$. Thus, $f$ is separately continuous. (It should be noted that if $x\in X$ such that $J=\{t\in T\colon x(t)=1\}$ is not a finite set, then ${x^\beta}\!\upharpoonright_{\beta T\setminus T}\not\equiv0$. In fact, if $j_\alpha\in J\to y\in\beta T\setminus T$, then $x^\beta(y)=1$.)

(3): Let $(x,y)\in X\times\varPsi$ and assume $f\colon X\times Y\rightarrow\{0,1\}$ is jointly continuous at $(x,y)$.
Then there exists a set $U\in\mathfrak{N}_x(X)$ and a clopen set $V\in\mathfrak{N}_y(\beta T)$ such that $f(U\times(V\cap Y))=\{c\}$ for some point $c\in\{0,1\}$. Choose $J\in\mathscr{J}$ such that $U(x,J)\subseteq U$.
Let $I\subset(V\cap T)\setminus J$ be a countable set, and so $\overline{I}\subseteq V$; and let $x_1,x_2\in U(x,J)$ such that $x_1(t)\not=x_2(t)$ for all $t\in I$. Now we can take a net $t_i\in I$ and a point $q\in V\cap Y$ such that $t_i\to q$. Then
$$
{\lim}_ix_1(t_i)={\lim}_if(x_1,t_i)=f(x_1,q)=c=f(x_2,q)={\lim}_if(x_2,t_i)={\lim}_ix_2(t_i),
$$
which is impossible. Our construction of Example~\ref{7.5} is complete.
\end{proof}

\begin{thm}\label{7.6}
Let $X$ be an open subspace of a completely regular $\Pi$-separable space. Then:
\begin{enumerate}[(1)]
\item $X$ is a Baire space if and only if $X$ is an $\mathcal{N}$-space (cf.~\cite[Thm.~6]{SR83} for $X$ a separable space).
\item $X$ is non-meager in itself if and only if $X$ is a g$\mathcal{N}$-space.
\end{enumerate}
\end{thm}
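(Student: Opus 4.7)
The plan is to deduce both (1) and (2) as short applications of Theorem~\ref{2.5} together with Saint-Raymond's Theorem~\ref{7.1}. First observe that $X$ inherits complete regularity from its ambient $\Pi$-separable space, so Theorem~\ref{7.1} is available to us whenever $X$ is an $\mathcal{N}$-space.

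For part (1), the direction $\mathcal{N}$-space $\Rightarrow$ Baire is Theorem~\ref{7.1}. Conversely, assuming $X$ is Baire, I would apply Theorem~\ref{2.5} with $X_o=X$, an arbitrary compact $Y$, and $Z=\mathbb{R}$: it produces a residual set $R\subseteq X$ at every point of which the given separately continuous $f\colon X\times Y\to\mathbb{R}$ is jointly continuous. Since $X$ is Baire the residual set $R$ is dense; realizing it as a $G_\delta$ by the standard oscillation-sublevel construction (intersect over $n$ the open sets on which the oscillation of $f$ along $\{x\}\times Y$ is $<1/n$) then gives the $\mathcal{N}$-property.

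For part (2), the direction $\mathcal{N}$-space $\Rightarrow$ second category is Theorem~\ref{7.1} (giving Baire) chained with the trivial ``Baire $\Rightarrow$ non-meager''. For the converse, my plan is to apply Theorem~\ref{2.5} to $X$ to produce a residual set of joint-continuity points and then to upgrade this to a dense $G_\delta$ subset of $X$. The natural route, pointed out in Remark~\ref{2.8}, is via Banach's category theorem (Theorem~A.2): it supplies a largest meager closed set $F\subseteq X$, and $X\setminus F$ is open, Baire, and still an open subspace of the ambient $\Pi$-separable space. Theorem~\ref{2.5} applied to $X\setminus F$ then yields a dense $G_\delta$ of joint-continuity points for $f|_{(X\setminus F)\times Y}$; joint continuity at such a point persists for $f$, and $G_\delta$-ness transfers to $X$ because $X\setminus F$ is open in $X$.

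The hard part will be the final density step in (2). The set $X\setminus F$ is only guaranteed to be non-empty and Baire, not dense in $X$, since the closed meager set $F$ may have non-empty interior (a phenomenon compatible with complete regularity, as illustrated already by the space $\mathbb{Q}\cup[0,1]$ of Example~\ref{1.4}). Consequently the dense $G_\delta$ of joint-continuity points inside $X\setminus F$ need not be dense in $X$, which is precisely what the $\mathcal{N}$-property requires. Making this density upgrade work under the bare hypotheses of the theorem is the crux of (2), and will require a genuine new ingredient that uses both complete regularity and the fact that $X$ is open in a $\Pi$-separable space rather than merely a completely regular non-meager space in its own right.
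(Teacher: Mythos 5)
Your treatment of part (1) is correct and is exactly the paper's argument: Theorem~\ref{7.1} for the direction ``$\mathcal{N}$-space $\Rightarrow$ Baire'', and Theorem~\ref{2.5} with $X_o=X$ for the converse. The extra oscillation construction you describe is not needed: the residual set produced in the proof of Theorem~\ref{2.5} is already the complement of the $F_\sigma$ set $\mathrm{D}=\bigcup_n E_n$, hence a $G_\delta$, and it is dense when $X$ is Baire.

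The difficulty you ran into in part (2) is genuine, but it is not a gap you are expected to close: the statement as printed is false. The paper's own Example~\ref{1.4} ($X=\mathbb{Q}\cup[0,1]$, a separable metric --- hence completely regular $\Pi$-separable --- space of second category that is not an $\mathcal{N}$-space) directly contradicts ``second category $\Rightarrow$ $\mathcal{N}$-space''. Part (2) is a misprint for ``$X$ is of second category if and only if $X$ is a g.$\mathcal{N}$-space'', in line with Theorem~\ref{1.3}-(1), the remark following Theorem~\ref{2.5}, the parallel Theorem~\ref{7.8}-(2), and the paper's own proof of Theorem~\ref{7.6}, which invokes Theorem~\ref{7.3} (a statement about g.$\mathcal{N}$-spaces) for that half of (2). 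Under the corrected reading the density upgrade you were struggling with is not needed: Theorem~\ref{2.5} gives a residual set $R\subseteq X$ of points $x$ at which $f$ is jointly continuous on $\{x\}\times Y$, and a residual subset of a space of second category is non-void, which is all the g.$\mathcal{N}$-property asks for. Conversely, ``g.$\mathcal{N}$ $\Rightarrow$ second category'' is Theorem~\ref{7.3}; you cannot route this direction through Theorem~\ref{7.1}, since a g.$\mathcal{N}$-space need not be an $\mathcal{N}$-space (again Example~\ref{1.4}). In short: stop trying to make the dense-$G_\delta$ step work --- it provably cannot --- and instead record the misstatement and prove the g.$\mathcal{N}$ version, which your tools already cover.
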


\begin{proof}
Necessity of (1) and (2) follows from Theorem~\ref{2.5}. Sufficiency of (1) and (2) follows from Theorems~\ref{7.1} and \ref{7.3}, respectively. 
\end{proof}

\begin{rem}\label{7.7}
Let $X$ be a completely regular $\Pi$-separable space. Then $X$ is a g$\mathcal{N}$-space if and only if it has an open non-void subspace which is an $\mathcal{N}$-space.
\end{rem}

\begin{proof}
Sufficiency is obvious by Theorem~\ref{7.6}. Now, if $X$ is a  g$\mathcal{N}$-space, then by Theorem~\ref{7.3} it is non-meager in itself. So by Remark~\ref{2.10}, $X$ contains an open non-void $\mathcal{N}$-subspace.
\end{proof}

\begin{thm}\label{7.8}
Let $X$ be an open subspace of a $\Pi$-pseudo-metric space. Then:
\begin{enumerate}[(1)]
\item $X$ is a Baire space if and only if $X$ an $\mathcal{N}$-space (cf.~\cite[Thm.~7]{SR83} for $X$ a metric space and \cite[Cor.~1.3]{CP05} for $X$ a $\Pi$-metric space).
\item $X$ is non-meager in itself if and only if $X$ a g$\mathcal{N}$-space.
\end{enumerate}
\end{thm}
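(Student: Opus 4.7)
The plan is to deduce Theorem~\ref{7.8} directly from machinery already in place, using Corollary~\ref{3.7}, the consequent clause of Theorem~\ref{3.4}, Theorem~\ref{7.1}, and Theorem~\ref{7.3}. The first step is to observe that every pseudo-metric space is completely regular in the sense used at the start of $\S$\ref{s7}: given $x\in X$ and $U\in\mathfrak{N}_x(X)$, pick $\varepsilon>0$ with the $\rho$-ball $B_\rho(x,\varepsilon)\subseteq U$ and set $f(y)=\min\{1,\rho(x,y)/\varepsilon\}$. Complete regularity is preserved by arbitrary Tychonoff products (take the maximum of separating functions over the finitely many coordinates appearing in a basic open neighborhood) and by open subspaces (restrict the separating function). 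Consequently any open subspace $X$ of a $\Pi$-pseudo-metric space is completely regular, so Theorems~\ref{7.1} and \ref{7.3} both apply to it.

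Given this, three of the four implications are immediate. For part~(1), ``$X$ is an $\mathcal{N}$-space $\Rightarrow$ $X$ is Baire'' is Theorem~\ref{7.1}, and ``$X$ is Baire $\Rightarrow$ $X$ is an $\mathcal{N}$-space'' is Corollary~\ref{3.7}. For part~(2), ``$X$ is a g.$\mathcal{N}$-space $\Rightarrow$ $X$ is of second category'' is Theorem~\ref{7.3}.

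The only remaining implication is the sufficiency in~(2): if $X$ is of second category, then $X$ is a g.$\mathcal{N}$-space. To verify this, let $Y$ be any compact space and let $f\colon X\times Y\rightarrow\mathbb{R}$ be separately continuous; since $Y$ is compact, $Y\times Y$ is also compact, hence countably compact. By the consequent clause of Theorem~\ref{3.4}, which applies verbatim to any open non-meager subspace $X$ of a $\Pi$-pseudo-metric space (with the game-theoretic hypothesis on $X$ supplied by Theorem~\ref{3.3}), there exists a residual set $R\subseteq X$ such that $f$ is jointly continuous at each point of $R\times Y$. Since $X$ is of second category, the meager complement $X\setminus R$ cannot exhaust $X$, so $R\not=\emptyset$; any $x\in R$ then witnesses the g.$\mathcal{N}$-property.

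The plan involves no substantial obstacle: the only verification worth isolating is the complete regularity of $X$ needed to invoke Theorems~\ref{7.1} and \ref{7.3}, and everything else is a bookkeeping consolidation of Sections~\ref{s3} and~\ref{s7}.
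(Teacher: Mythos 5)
Your proposal is correct and follows essentially the same route as the paper: the paper's proof reads ``Necessity of (1) and (2) follows from Theorems~\ref{7.1} and \ref{7.3}, respectively; sufficiency of (1) and (2) follows from Theorems~\ref{3.3} and \ref{3.4}.'' Your only addition is the explicit check that an open subspace of a product of pseudo-metric spaces is completely regular (needed to invoke Theorems~\ref{7.1} and \ref{7.3}), a detail the paper leaves implicit.
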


\begin{proof}
Necessity of (1) and (2) follows from Theorem~\ref{3.4}.
Sufficiency of (1) and (2) follows from Theorems~\ref{7.1} and \ref{7.3}, respectively. 
\end{proof}

\begin{thm}\label{7.9}
Let $X\times Y$ be such that each factor is either a completely regular separable space or a pseudo-metrizable space. Then: 
\begin{enumerate}[(1)]
\item $X\times Y$ is a Baire space if and only if it is an $\mathcal{N}$-space.
\item $X\times Y$ is a non-meager space if and only if it is a g$\mathcal{N}$-space.
\end{enumerate}
\end{thm}

\begin{proof}
    (1) follows easily from Theorems~\ref{7.1} and \ref{3.11}; and (2) from  Theorems~\ref{7.3} and \ref{3.11}.
\end{proof}

\begin{rem}\label{7.11}
Let $X$ is a $\Pi$-pseudo-metric space. Then $X$ is a g$\mathcal{N}$-space if and only if it has an open non-void subspace which is an $\mathcal{N}$-space.
\end{rem}

\begin{rem}\label{7.10}
    If $G$ is a right/left/semi-topological group such that $G$ is a completely regular g$\mathcal{N}$-space, then $G$ is Baire; but we cannot guarantee that $G$ has the $\mathcal{N}$-property, this is because for all $g, g_0\in G$, there is no commutative diagram of maps in general:
$$
\begin{tikzcd}
G\times Y \arrow[r, "f"] \arrow[d, "\tau"'] & \mathbb{R}  \\
G\times Y \arrow[r, "f"] &\arrow[u, "\psi"]\mathbb{R}
\end{tikzcd}
\quad \textrm{where }
\begin{cases}&\tau\colon G\times Y\rightarrow G\times Y\textrm{ is continuous s.t. } \tau(\{g\}\times Y)\subseteq\{g_0\}\times Y\\
&\textrm{\quad and}\\
&\psi\colon \mathbb{R}\rightarrow \mathbb{R} \textrm{ is continuous}, 
\end{cases}$$
for any separately continuous function $f\colon G\times Y\rightarrow\mathbb{R}$ where $Y$ is a compact Hausdorff space. However, if $G$ is $\Pi$-separable or $\Pi$-pseudo-metric, then $G$ is a g$\mathcal{N}$-space if and only if it is an $\mathcal{N}$-space by Theorem~\ref{7.6} or \ref{7.8}.
\end{rem}

%\begin{tikzcd}
%G\times Y \arrow[r, "f"] \arrow[d, "\tau"'] & \mathbb{R}  \\
%G\times Y \arrow[ur, "f"'] &
%\end{tikzcd}
%%%%%%%%%%%%%%%%%%%%%%%%%%%%%%%%%%%%%%%%%%%%%%%%%%%%%%%%%%%%%%%%%%%%%%%
\begin{appendix}
\section{Topological Fubini theorems and category theorems}\label{A}
Fubini's theorem says that if $E\subset\mathbb{R}^2$ is a plane set of measure zero, then $E_x=\{y\,|\,(x,y)\in E\}$ is a linear null set for all $x$ except a set of linear measure zero in $\mathbb{R}$ (cf., e.g.,~\cite[Thm.~14.2]{O80}). For reader's convenience and for the self-closeness, we will present two topological Fubini theorems (Lem.~A.1 and Lem.~A.8). In fact, Lemma~A.8 is a slight modification of Lemma~\ref{5.4.1}.

The first topological Fubini theorem, Lemma~A.1$^\prime$ below, is due to Brouwer (1919)~\cite{B19} in the case that $X,Y$ are intervals, to Kuratowski and Ulam (1932) \cite{KU32} (also \cite[Thm.~15.1]{O80}) for the case that $X,Y$ are separable metric spaces, and to Oxtoby (1960)~\cite[(1.1)]{O60} for the general case. Here we will give a different formulation and simple proof as follows.

\begin{A.1}[Topological Fubini Theorem~I]
Let $X$ and $Y$ be spaces, where $Y$ has a countable $\pi$-base. If $G\subseteq X\times Y$ is dense open, then $X_G=\{x\in X\,|\,G_x\textrm{ is dense open in }Y\}$
is residual in $X$. In particular, if $K\subseteq X\times Y$ is residual, then
$X_K=\{x\in X\,|\,K_x\textrm{ is residual in }Y\}$
is residual in $X$.
\end{A.1}

\begin{proof}
$(X\times Y)\setminus G=F$ is a closed nowhere dense set in $X\times Y$. Then $Y\setminus G_x=F_x\ \forall x\in X$. Let
$B=\left\{x\in X\,|\,\mathrm{int}_YF_x\not=\emptyset\right\}$.
So if $x\notin B$, then $G_x$ is open dense in $Y$. Thus, $X\setminus B\subseteq X_G$ and we need only prove that $B$ is meager in $X$.
For that, let $\{U_n\}_{n=1}^\infty$ be a countable $\pi$-base for $Y$. If $x\in B$, then $U_n\subseteq F_x$ for some $n\in\mathbb{N}$.
Put
$C_n=\{x\in B\,|\,U_n\subseteq F_x\}$ and $D_n=\mathrm{int}_X \overline{C}_n$
for all $n\in\mathbb{N}$. Then $B=\bigcup_{n=1}^\infty C_{n}$, and $B$ is meager in $X$ if each $D_{n}=\emptyset$. Indeed, if $D_{n}\not=\emptyset$, then $U_n\subseteq F_x$ for all $x\in D_n\cap C_n$ and $D_n\cap C_n$ is dense in $D_n$. So $(D_n\cap C_n)\times U_n\subseteq F$ so that $\emptyset\not=D_n\times U_n\subseteq \overline{F}=F$, contrary to $F$ being nowhere dense in $X\times Y$. The proof is complete.
\end{proof}

If $E\subset X\times Y$ is nowhere dense (i.e., $\mathrm{int}\,\overline{E}=\emptyset$), then $G=X\times Y\setminus\overline{E}$ is dense open in $X\times Y$, $G_x=Y\setminus\overline{E}_x$ and $G_x\subseteq Y\setminus E_x$ for all $x\in X$. Thus, Lemma~A.1 is equivalent to the following

\begin{A.1'}[{Topological Fubini Theorem~I$^\prime$; cf.~\cite[(1.1)]{O60}}]
Let $X$ and $Y$ be spaces, where $Y$ has a countable $\pi$-base. If $E$ is nowhere dense (resp.~meager) in $X\times Y$, then $E_x$ is nowhere dense (resp.~meager) in $Y$ for all $x$ except a meager set in $X$.
\end{A.1'}

\begin{A.1''}[{cf.~\cite[Thm.~2]{O60}}]
If $X$ is a Baire space and $Y$ a Baire space having a locally countable $\pi$-base, then $X\times Y$ is a Baire space.
\end{A.1''}

It should be mentioned that in Lemma~A.1 or Lemma~A.1$^\prime$, the hypothesis that $Y$ has a countable $\pi$-base cannot be relaxed even to a locally countable $\pi$-base (Def.~\ref{5.2.2}), as Kuratowski and Ulam showed by an example in \cite{KU32}.

\begin{A.2}[{Banach's Category Theorem~\cite{B30}; cf.~\cite[Thm.~6.35]{K55}\,$\&$\,\cite[Thm.~16.1]{O80}}]
Let $A$ be a subset of a space $X$ and $M(A)$ the union of all open sets $V$ such that $V\cap A$ is meager in $X$. Then $A\cap \overline{M(A)}$ is meager in $X$.
\end{A.2}

%\begin{proof}
%Let $\mathscr{U}=\{U_\alpha\colon\alpha\in A\}$ be a disjoint family of open sets which is maximal w.r.t. the property: %if $U\in\mathscr{U}$, then $U\cap B$ is meager in $X$. Let $W=\bigcup_{\alpha\in A}U_\alpha$. The proof %reduces to showing that $B\cap W$ is meager in $X$, for if this is known then $B\cap\bar{W}$ is of first %category in $X$, and from the maximality of $\mathscr{U}$ it follows that $\bar{W}=\overline{M(B)}$. To show that $B\cap %W$ is meager, write $U_\alpha\cap B=\bigcup_{n\in\mathbb{N}}F_{\alpha,n}$ where $F_{\alpha,n}$ is nowhere %dense in $X$. Then $B\cap W=\bigcup_{n\in\mathbb{N}}\left(\bigcup_{\alpha\in A}F_{\alpha,n}\right)$ is of first %category, since $F_n:=\bigcup_{\alpha\in A}F_{\alpha,n}$ is nowhere dense in $X$ for all $n\in\mathbb{N}$. In fact, if %otherwise, then $U:=\mathrm{int}_X\overline{F}_n\not=\emptyset$ and it meets some $F_{\alpha,n}$. So there exists %$V\in\mathscr{O}(X)$ such that $V\subseteq(U\cap U_\alpha)\setminus F_{\alpha,n}$. Hence $V\subseteq (U\cap W)\setminus %F_n\subseteq U\setminus F_n$, contrary to $U\subseteq\overline{F}_n$, and so $F_n$ is nowhere dense.
%\end{proof}

Consequently, by $A=X$ in any topological space $X$ the closure of the union of any family of meager open sets is meager (cf.~\cite[Thm.~16.1]{O80}).

\begin{A.3}[{Kuratowski-Ulam-Sikorski Theorem; cf.~\cite{KU32, S47} and \cite[Thm.~1]{O60}}]\label{A4}
Let $X$ and $Y$ be spaces at least one of which has a locally countable $\pi$-base. Let $A\subseteq X$ and $B\subseteq Y$. Then $A\times B$ is meager in $X\times Y$ if and only if either $A$ or $B$ is meager in $X$ or $Y$.
\end{A.3}

\begin{proof}
Sufficiency is obvious. Now, for necessity, assume $A\times B$ is meager in $X\times Y$. Suppose that $A$ is non-meager in $X$, and that $Y$ has a locally countable $\pi$-base $\mathcal{B}$. Let $Y_o=\bigcup\{V\,|\,V\in\mathcal{B}\}$. Then $Y_o$ is dense open in $Y$ so that $Y\setminus Y_o$ is meager in $Y$. Thus, to prove that $B$ is meager in $Y$, we may assume that $B\subseteq Y_o$. So, for each $b\in B$, there exists a member $V\in\mathcal{B}$ with $b\in V$ such that $V$ has a countable $\pi$-base. As $A\times(B\cap V)$ is meager in $X\times V$, it follows from Lemma~A.1$^\prime$ that $B\cap V=A\times(B\cap V)_x\ \forall x\in A$ is meager in $V$ and therefore in $Y$. Then by Theorem~A.2, $B=B\cap\overline{M(B)}$ is meager in $Y$.
\end{proof}

Corollary~A.1$^{\prime\prime}$ and Theorem~A.3 generalize easily to product of finitely many spaces each of which has a locally countable $\pi$-base. But Theorem~A.3 does not generalize to infinite products, even when each space has a countable base. For example, let $X=[0,1]$ and $A=[0,1/2]$; then $A^\infty$ is nowhere dense in $X^\infty$, but $A$ is non-meager in $X$ \cite{KU32}. In addition, if neither of $X$ and $Y$ has a locally countable $\pi$-base, then Theorem~A.3 might be false, even when each space is metrizable (see, e.g., \cite{C76, P79} for counterexamples).

\begin{A.4}[BM$_R$-game; cf.~\cite{O57, LM08}]\label{A5}
Let $R\subseteq X$. By a \textit{BM$_R(X)$-play}, we mean a sequence $\{(B_i,A_i)\}_{i=1}^\infty$ of ordered pairs such that $B_i,A_i\in\mathscr{O}(X)$ and $B_i\supseteq A_i\supseteq B_{i+1}$ for all $i\in\mathbb{N}$, where $B_i$ and $A_i$ are picked alternately by Player $\beta$ and Player $\alpha$, respectively; and moreover, Player $\beta$ is always granted the privilege of the first move. In fact, $\{(B_i,A_i)\}_{i=1}^\infty$ is a BM($X$)-play (Def.~\ref{2.1}A). 
\begin{enumerate}[\textbf{W.C.}:]
    \item Player $\alpha$ \textit{wins} the BM$_R(X)$-play $\{(B_i,A_i)\}_{i=1}^\infty$, if $\bigcap_{i=1}^\infty A_i\subseteq R$; otherwise, Player $\beta$ \textit{wins} this play.
\end{enumerate}
\end{A.4}

\begin{A.5}[{cf.~Oxtoby 1957 \cite{O57}}]
Let $R$ be a subset of a space $X$. Then $R$ is residual in $X$ if and only if there exists a winning strategy for Player $\alpha$ in the BM$_R(X)$-game.
\end{A.5}

\begin{proof}
\textsl{Necessity}: Suppose $R$ is residual in $X$. Then there exists a sequence $\{G_n\}_{n=1}^\infty$ of open dense subsets of $X$ with $R \supseteq \bigcap_{n=1}^{\infty} G_n$. We can define a strategy $\sigma$ for Player $\alpha$ in the BM$_R(X)$-game as follows:
If Player $\beta$ chooses $U_1 \in \mathscr{O}(X)$, then Player $\alpha$ responds $\sigma(U_1):= V_1=U_1 \cap G_1$. Next, if Player $\beta$ chooses $U_2 \in \mathscr{O}(V_1)$, then Player $\alpha$ responds $\sigma(U_1, U_2):=V_2=U_2 \cap G_2$.
Inductively,
$\sigma(U_1, \dotsc, U_n) := V_n=U_n \cap G_n\ \forall n \in \mathbb{N}$ such that
$\bigcap_{n=1}^{\infty} U_n = \bigcap_n(U_n \cap G_n) \subseteq \bigcap_nG_n \subseteq R$. Thus, $\sigma$ is a winning strategy for Player $\alpha$ in the BM$_R(X)$-game.

\textsl{Sufficiency}: Let $\sigma$ be a winning strategy for Player $\alpha$ in the BM$_R(X)$-game.
For each $n \in \mathbb{N}$, define $\mathscr{P}_n$ as a maximal family $\{(U_{n,i},V_{n,i})\}_{i \in I_n}$ satisfying:
\begin{enumerate}
    \item $\{V_{n,i}\}_{i \in I_n}$ are pairwise disjoint, and $U_{n,i}, V_{n,i} \in \mathscr{O}(X)$ with $U_{n,i}\supseteq V_{n,i}$ $\forall i\in I_{n}$;
    \item $\forall i \in I_n, \exists j \in I_{n-1}$ s.t. $V_{n-1,j}\supseteq U_{n,i}$, and $U_{0,j}=V_{0,j}=X$ $\forall j \in I_{0}$;
    \item If $(i_1, \dotsc, i_n) \in I_1 \times \cdots \times I_n$ with $V_{1,i_1} \supseteq \cdots \supseteq V_{n,i_n}$, then $V_{n,i_n} =\sigma(U_{1,i_{1}}, \dotsc, U_{n,i_{n}})$.
\end{enumerate}
Let $\Omega_n = \bigcup_{i \in I_n} V_{n,i}$. Then $\Omega_n$ is open dense in $X$ for all $n \in \mathbb{N}$.
Indeed, for $n=1$, if $\Omega_1$ were not dense, then take $G_{1} = X \setminus \overline{\Omega}_1 \in \mathscr{O}(X)$ so that $\mathscr{P}_1 \cup \{(G_{1}, \sigma(G_{1}))\}$ contradicts the maximality of $\mathscr{P}_1$.
Assume $\Omega_n$ is dense, then $\Omega_{n+1}$ is dense. Indeed,
suppose $\overline{\Omega}_{n+1}\neq X$, then $G_{n+1} := X \setminus \overline{\Omega}_{n+1}\in \mathscr{O}(X)$. Since $\Omega_n$ is dense, $G_{n+1} \cap \Omega_n \neq \emptyset$. Thus there exists some $i^* \in I_n$ with $G_{n+1} \cap V_{n,i^*} \neq \emptyset$. Let $U^* = G_{n+1} \cap V_{n,i^*}$.
For $(i_1, \dotsc, i_{n-1},i^*) \in I_1\times \dotsm \times I_{n-1} \times I_{n}$, $U_{1,i_1}\supseteq V_{1,i_1} \supseteq \dotsm \supseteq U_{n,i^*}\supseteq V_{n,i^*}$, let $V^* = \sigma(U_{1,i_1},\dotsc,U_{n-1,i_{n-1}},U_{n,i^*},U^*)$.
Then $\mathscr{P}_{n+1}^* := \mathscr{P}_{n+1} \cup \{(U^*, V^*)\}$ satisfies the above three conditions.
This contradicts the maximality of $\mathscr{P}_{n+1}$.

If $\bigcap_{n=1}^{\infty} \Omega_n = \emptyset$, then $X$ is meager; and so, $R$ is residual in $X$. Otherwise, for every point $x\in \bigcap_{n=1}^{\infty} \Omega_n = \bigcap_{n=1}^{\infty} \left( \bigcup_{i \in I_n} V_{n,i} \right)$, then there exists $i_n \in I_n$ for all $n\in\mathbb{N}$ such that $x \in \bigcap_{n=1}^{\infty} V_{n,i_n}$.
By the construction of $\mathscr{P}_n$, the sequence $\{(U_{n,i_n}, V_{n,i_n})\}_{n=1}^{\infty}$ defines a BM$_R(X)$-play. Since $\sigma$ is a winning strategy for Player $\alpha$,  $\bigcap_{n=1}^{\infty} \Omega_n \subseteq \bigcap_{n=1}^{\infty} V_{n,i_n} \subseteq R$
and $R$ is residual in $X$. The proof is complete.
\end{proof}

Recall that $\mathcal{S}_\textrm{cl}(X)$ is the collection of all non-void closed separable subspaces of $X$. Let $X_0\subset X$ be a dense set and $\mathcal{S}_\textrm{cl}(X|X_0)=\{F\in\mathcal{S}_\textrm{cl}(X)\,|\,\exists B\subseteq X_0\textit{ s.t. }B\textit{ is countable }\ \&\ \overline{B}=F\}$. Then:

\begin{A.6}
If $X$ has countable tightness and $X_0\subset X$ a dense set, then $\mathcal{S}_\textrm{cl}(X|X_0)$ is a rich family for $X$.
\end{A.6}

\begin{proof}
By Lemma~\ref{4.1.3}.
\end{proof}

\begin{A.7}
Let $O\subseteq X\times Y$ be an open dense set. Then for all $U\in\mathscr{O}(X)$ and $W_1,\dotsc, W_m\in\mathscr{O}(Y)$, there exist $V\in\mathscr{O}(U)$ and $y_1\in W_1, \dotsc, y_m\in W_m$ such that $V\times\{y_1,\dotsc,y_m\}\subseteq O$.
\end{A.7}

\begin{proof}
    Obvious.
\end{proof}

\begin{A.8}[{Topological Fubini Theorem~II; cf.~\cite[Thm.~4.3]{LM08} for $Y$ a $W$-space of G}]
Let $X$ be a space and $Y$ an almost $W$-space of G with countable tightness. Let $\mathcal{F}$ be any rich family for $Y$. If $\mathcal{G}=\{G_n\}_{n=1}^\infty$ is a sequence of dense open subsets of $X\times Y$, then
\begin{enumerate}
\item[] $X_\mathcal{G}=\{x\in X\,|\,\exists F\in\mathcal{F}\textrm{ s.t. }F\cap G_{n,x}\textrm{ is dense open in }F\ \forall n\in\mathbb{N}\}$
\end{enumerate}
is residual in $X$.
(In particular, if $R$ is a residual set in $X\times Y$, then
\begin{enumerate}
\item[] $X_R=\{x\in X\,|\,\exists F\in\mathcal{F}\textrm{ s.t. }F\cap R_x\textrm{ is residual in }F\}$
\end{enumerate}
is residual in $X$.)
\end{A.8}

\begin{proof}
Let $Y_0$ be the dense set of $W$-points of G in $Y$. Then by Lemma~A.6,
$\mathcal{S}_\textrm{cl}(Y|Y_0)$ is a rich family for $Y$. Let $\mathcal{F}_0=\mathcal{S}_\textrm{cl}(Y|Y_0)\cap\mathcal{F}$.
Without loss of generality, assume $\mathcal{G}$ is a decreasing sequence. If $Y$ is finite (not necessarily discrete in our non-$T_1$ setting), then $X_\mathcal{G}$ is residual in $X$ by Lemma~A.1. So, in what follows, suppose $Y$ is infinite.

For any $a\in Y_0$, let $t_a$ be a winning strategy for Player $\alpha$ in the $\mathscr{G}_a(Y)$-game (cf. Def. \ref{5.1.1}). We shall inductively define a winning strategy $\sigma$ for Player $\alpha$ in the BM$_{X_G}(X)$-game. For that, first let $Z_0=\emptyset$ and $\mathscr{F}_0=\{y_{0,j}\in Y_0\,|\,j\in\mathbb{N}\}$ any countable set such that $\overline{\mathscr{F}}_0\in\mathcal{F}_0$.

\item \textsl{Base Step}: For all $B_1\in\mathscr{O}(X)$, by using Lemma~A.7 we can define a set $\mathscr{F}_1=\{y_{1,j}\in Y_0\,|\,j\in\mathbb{N}\}$ so that $Z_0\cup\mathscr{F}_0\subseteq\overline{\mathscr{F}}_1\in\mathcal{F}_0$, and define
$\sigma(B_1)\in\mathscr{O}(B_1)$ and $z_{1,1,1}\in t_{y_{1,1}}(y_{1,1})$ so that $\sigma(B_1)\times\{z_{1,1,1}\}\subseteq G_1$.
Define $Z_1=\{z_{1,1,1}\}=\{z_{i,j,l}\,|\,i,j,l\in\mathbb{N}\textit{ s.t. }i+j+l\le 1+2\}$.

\item \textsl{Inductive Hypothesis}: Suppose $(B_1,\dotsc,B_k)$ is a partial $\sigma$-string in $\mathscr{O}(X)$, and for each $1\le n\le k$ the following terms have been defined:
    $$
    \mathscr{F}_n=\{y_{n,j}\in Y_0\,|\,j\in\mathbb{N}\},\quad Z_n=\{z_{i,j,l}\,|\,i,j,l\in\mathbb{N}\textit{ s.t. }i+j+l\le n+2\},\quad \sigma(B_1,\dotsc,B_n)\in\mathscr{O}(B_n)
    $$
    such that
\begin{enumerate}
\item[(a)] $Z_{n-1}\cup\mathscr{F}_{n-1}\subseteq\overline{\mathscr{F}}_n\in\mathcal{F}_0$;
\item[(b)] $z_{i,j,l}\in t_{y_{i,j}}(y_{i,j},z_{i,j,1},\dotsc,z_{i,j,l-1})$ for all $i,j,l\in\mathbb{N}$ with $i+j+l=n+2$; and
\item[(c)] $\sigma(B_1,\dotsc,B_n)\times\{z_{i,j,l}\colon i+j+l=n+2\}\subseteq G_n$.
\end{enumerate}

\item \textsl{Inductive Step}: Suppose $(B_1,\dotsc,B_{k+1})$ is a partial $\sigma$-string, i.e., $B_{k+1}\in\mathscr{O}(\sigma(B_1,\dotsc,B_k))$. Then:
\begin{enumerate}
\item[(i)] Define $\mathscr{F}_{k+1}=\{y_{k+1,j}\in Y_0\,|\,j\in\mathbb{N}\}$ such that $Z_k\cup\mathscr{F}_k\subseteq\overline{\mathscr{F}}_{k+1}\in\mathcal{F}_0$;
\item[(ii)] By the inductive hypothesis, $(y_{i,j},z_{i,j,1},\dotsc,z_{i,j,l})$ is a partial $t_{y_{i,j}}$-string for all $i,j,l\in\mathbb{N}$ with $i+j+l=k+2$.

Next, define $\sigma(B_1,\dotsc,B_{k+1})\in\mathscr{O}(B_{k+1})$ and $Z_{k+1}=\{z_{i,j,l}\,|\,i,j,l\in\mathbb{N}\textit{ s.t. }i+j+l\le(k+1)+2\}$ so that:
\begin{enumerate}
\item[(a)] $z_{i,j,l}\in t_{y_{i,j}}(y_{i,j},z_{i,j,1},\dotsc,z_{i,j,l-1})$ for all $i,j,l\in\mathbb{N}$ with $i+j+l=(k+1)+2$;
\item[(b)] $\sigma(B_1,\dotsc,B_{k+1})\times\{z_{i,j,l}\colon i+j+l=(k+1)+2\}\subseteq G_{k+1}$.
\end{enumerate}
\end{enumerate}
This completes the inductive definition of $\sigma$.

Finally, we will consider any $\sigma$-sequence $\{B_n\}_{n=1}^\infty$ of the BM$_{X_G}(X)$-game. For that for every point $x\in\bigcap_{n=1}^\infty B_n$ (if exists), let $F=\overline{\bigcup_{n=1}^\infty\mathscr{F}_n}\in\mathcal{F}_0$. Given $y_{i,j}\in \mathscr{F}_i\,(\subseteq\mathcal{F})$ and $N\in\mathbb{N}$, we have that $F\ni z_{i,j,l}\to y_{i,j}$ as $l\to\infty$ for $t_{y_{i,j}}$ is a winning strategy for Player $\alpha$ in the $\mathscr{G}_{y_{i,j}}(Y)$-game; and moreover, $\{x\}\times\{z_{i,j,l}\colon i+j+l=n+2\}\subseteq G_n\subseteq G_N$, i.e., $\{z_{i,j,l}\colon i+j+l=n+2\}\subseteq G_{N,x}$, as $n\ge N$. Thus, $F\cap G_{N,x}$ is dense in $F$ for all $N\in\mathbb{N}$ so that $x\in X_\mathcal{G}$. Then $\bigcap_{n=1}^\infty B_n\subseteq X_\mathcal{G}$ is residual in $X$ by Lemma~A.5. The proof is complete.
\end{proof}

In applications of Lemma~A.8, $\mathcal{F}$ is often a rich family of non-meager subspaces for $Y$. However, even a metric space need not have such a rich family.

Finally Lemma~A.1 may be compared with Lemma~A.8. The two lemmas overlap, but neither includes the other.
See \cite[Prop.~3.1]{V70}, \cite[Lem.~5.2]{G90}, \cite[Lem.~5.3]{D23}, and \cite[Thm.~2.2.5]{DFLX} for some other variants of Fubini's theorem in the setting $p\colon W\rightarrow X$ in place of $p\colon W=X\times Y\rightarrow X$, where $p$ is only a semi-open continuous mapping but $W$ is a second countable space or has a $p$-fiber countable $\pi$-base.
%%%%%%%%%%%%%%%%%%%%%%%%%%
\section{Proof of Lemma~\ref{7.2}}\label{B}
Recall that the so-called Schwarz function $S\colon [0,1]\times[0,1]\rightarrow[0,1]$, defined by $S(s,t)=0$ if
$(s,t)=(0,0)$ and $2st/(s^2+t^2)$ if $(s,t)\not=(0,0)$, is separately continuous, but jointly continuous at $(s,t)$ if and only if $(s,t)\not=(0,0)$.

\begin{7.2}
Let $X$ be a completely regular space and $F\subset X$ a nowhere dense set. Then there exists a compact Hausdorff space $Y$ and a separately continuous function $f\colon X\times Y\rightarrow[0,1]$ such that for each $x\in F$, there is a point $y\in Y$ such that $f$ is discontinuous at $(x,y)$.
\end{7.2}

The proof of Lemma~\ref{7.2}, due to Saint-Raymond (1983) \cite[Lem.~4]{SR83}, was written in French. So we reprove it here for our convenience.

\begin{proof}
We may assume $F$ is closed without loss of generality. Using induction, we can choose a family $\Phi=\{\varphi_i\,|\,i\in \Lambda\}$ in $\mathcal{C}(X,[0,1])$ such that: ${\varphi_i}\!\upharpoonright_F\equiv0$ for all $i\in \Lambda$, $\varphi_i\cdot\varphi_j\equiv0$ for all $i\not=j\in \Lambda$, and $\varOmega:=\bigcup_{i\in \Lambda}\{x\in X\,|\,\varphi_i(x)>0\}$ is dense open in $X$.

Consider $\Lambda$ as a discrete topological space so that $\Lambda\times[0,1]$ is a locally compact Hausdorff space. Let $Y=\Lambda\times[0,1]\cup\{\infty\}$ be the one-point compactification of $\Lambda\times[0,1]$. Define a map $f\colon X\times Y\rightarrow[0,1]$ such that
$$
f(x,y)=\begin{cases}
0 & \textrm{if }x\in X\textrm{ and }y=\infty,\\
S(\varphi_i(x),t)& \textrm{if }x\in X\textrm{ and }y=(i,t)\in I\times[0,1].
\end{cases}
$$
If $\{(i_j,t_j)\}_{j\in J}$ is a net in $\Lambda\times[0,1]$ such that $(i_j,t_j)\to\infty$ in $Y$, then for each $k\in \Lambda$, there exists $j_k\in J$ such that $i_j\not=k$ as $j\ge j_k$. Let $x\in X$. Then there exists at most one index $k(x)\in \Lambda$ such that $\varphi_{k(x)}(x)\not=0$. So $\varphi_{i_j}(x)=0$ as $j\ge j_{k(x)}$. Thus, $f(x,(i_j,t_j))=0$ as $j\ge j_{k(x)}$.
Then it is easy to verify that $f$ is separately continuous. Let $x\in F$. We can choose a net $\{x_j\}_{j\in J}$ in $\varOmega$ with $x_j\to x$. For each $j\in J$, we choose an index $i_j\in \Lambda$ such that $t_j:=\varphi_{i_j}(x_j)>0$ so that $f(x_j,(i_j,t_j))=1$. Since $Y$ is compact, we may assume (a subnet of) $(i_j,t_j)\to y=(i,t)\in Y$, and so $(x_j,(i_j,t_j))\to(x,y)$. As $f(x,y)=0$, it follows that $f$ is not continuous at $(x,y)$.
The proof is complete.
\end{proof}
\end{appendix}
%%%%%%%%%%%%%%%%%%%%%%%%%%%%%%%%%%%%%%%%
\section*{Acknowledgements}
\noindent 
The authors would like to thank the referee for her/his constructive comments. This work was supported by National Natural Science Foundation of China (Grant No. 12271245).

%    Bibliographies can be prepared with BibTeX using amsplain,
%    amsalpha, or (for "historical" overviews) natbib style.


\begin{thebibliography}{10}
%\bibliographystyle{amsplain}
\bibitem{B30}
\newblock {S.~Banach},
\newblock \emph{Th\'{e}or\`{e}me sur les ensembles de premi\'{e}re cat\'{e}gorie}.
\newblock {Fund. Math. 16 (1930), 395--398}.

\bibitem{B99}
\newblock {R.~Baire},
\newblock \emph{Sur les fonctions de variables r\'{e}elles}.
\newblock {Ann. Mat. Pura Appl. 3 (1899), 1-123}.

\bibitem{BM00}
\newblock {J.\,M.~Borwein and W.\,B.~Moors},
\newblock \emph{Separate determination of integrability and minimality of the Clarke subdifferential mapping}.
\newblock {Proc. Am. Math. Soc. 128 (2000), 215--221}.

\bibitem{B93}
\newblock {A.~Bouziad},
\newblock \emph{The Ellis theorem and continuity in groups}.
\newblock {Topol. Appl. 50 (1993), 73--80}.

\bibitem{B19}
\newblock {L.\,E.\,J. Brouwer},
\newblock \emph{Lebesguesches Mass und Analysis Situs}.
\newblock {Math. Ann. 79 (1919), 212--222}.

\bibitem{BP05}
\newblock {D.~Burke and R.~Pol},
\newblock \emph{Note on separate continuity and the Namioka property}.
\newblock {Topol. Appl. 152 (2005), 258--268}.

\bibitem{CT79}
\newblock {J.~Calbrix and J.-P.~Troallic},
\newblock \emph{Applications s\'{e}par\'{e}ment continues}.
\newblock {C. R. Acad. Sci. Paris Ser. A-B 288 (1979), 647--648}.

\bibitem{CP05}
\newblock J.~Chaber and R.~Pol,
\newblock \emph{On hereditarily Baire spaces, $\sigma$-fragmentability of mappings and Namioka property}.
\newblock {Topol. Appl. 151 (2005), 132--143}.

\bibitem{C69}
\newblock {G.~Choquet},
\newblock \emph{Lectures on Analysis, I}.
\newblock {W. A. Benjamin, Inc., New York, Amsterdam, 1969}.

\bibitem{C81}
\newblock {J.\,P.\,R.~Christensen},
\newblock \emph{Joint continuity of separately continuous functions}.
\newblock {Proc. Am. Math. Soc. 82 (1981), 455--461}.

\bibitem{Ch83}
\newblock {J.\,P.\,R.~Christensen},
\newblock \emph{Remarks on Namioka spaces and R.E.~Johnson's theorem on the norm separability of the range of certain mappings}.
\newblock {Math. Scand. 52 (1983), 112--116}.

\bibitem{C76}
\newblock {P.\,E.~Cohen},
\newblock \emph{Product of Baire spaces}.
\newblock {Proc. Am. Math. Soc. 55 (1976), 119--124}.

\bibitem{D23}
\newblock {X.-P.~Dai},
\newblock \emph{On M-dynamics and Li-York chaos of extensions of minimal dynamics}.
\newblock {J. Differ. Equ. 359 (2023), 152--182}.

\bibitem{DFLX}
\newblock {X.-P.~Dai, L.~Feng, C.-Y.~Lv and Y.-X.~Xie},
\newblock \emph{On semi-openness of fiber-onto extensions of semiflows and quasi-separable maps}.
\newblock {Topol. Appl. xxx (2026), Paper No. xxxxxx, 28 pp}.

\bibitem{DX}
\newblock {X.-P.~Dai and Y.-X.~Xie},
\newblock \emph{A joint continuity theorem}.
\newblock {Sci. Sin. Math. 56 (2026), 1--4 (in Chinese)}.

\bibitem{D86}
\newblock {G.~Debs},
\newblock \emph{Points de continuit\'{e} d'une fonction s\'{e}par\'{e}ment continue}.
\newblock {Proc. Am. Math. Soc. 97 (1986), 167--176}.

\bibitem{D87}
\newblock {G.~Debs},
\newblock \emph{Points de continuit\'{e} d'une fonction s\'{e}par\'{e}ment continue II}.
\newblock {Proc. Am. Math. Soc. 99 (1987), 777--782}.

\bibitem{E57}
\newblock {R.~Ellis},
\newblock \emph{Locally compact transformation groups}.
\newblock {Duke Math. J. 24 (1957), 119--125}.

\bibitem{E89}
\newblock {R.~Engelking},
\newblock \emph{General Topology}.
\newblock {Heldermann, Berlin, 1989}.

\bibitem{FK78}
\newblock {W.\,G.~Fleissner and K.~Kunen},
\newblock \emph{Barely Baire spaces}.
\newblock {Fund. Math. 101 (1978), 229--240}.

\bibitem{G90}
   \newblock {E.~Glasner},
   \newblock \emph{A topological version of a theorem of Veech and almost simple flows}.
   \newblock {Ergod. Theory Dyn. Syst. 10 (1990), 463--482}.

\bibitem{G76}
\newblock G.~Gruenhage,
\newblock \emph{Infinite games and generalizations of first countable spaces}.
\newblock {Topol. Appl. 6 (1976), 339--352}.

\bibitem{H99}
\newblock {R.\,G. Haydon},
\newblock \emph{Trees in renorming theory}.
\newblock {Proc. London Math. Soc. 78 (1999), 541--584}.

\bibitem{HS55}
\newblock E.~Hewitt and L.\,J.~Savage,
\newblock \emph{Symmetric measures on Cartesian products}.
\newblock {Trans. Am. Math. Soc. 80 (1955), 470--501}.

\bibitem{H28}
\newblock {W.~Hurewicz},
\newblock \emph{Relativ perfekte Teile von Punktmengen und Mengen (A)}.
\newblock {Fund. Math. 12 (1928), 78--109}.

\bibitem{K02}
   \newblock {O.~Kallenberg},
   \newblock \emph{Foundations of modern probability}. Second edition.
   \newblock {Probability and its Applications (New York). Springer-Verlag, New York, 2002}.

\bibitem{K95}
\newblock A.\,S.~Kechris,
\newblock \emph{Classical Descriptive Set Theory}.
\newblock {Springer, New York, 1995}.


\bibitem{K55}
   \newblock {J.\,L.~Kelley},
   \newblock \emph{General Topology}.
   \newblock {GTM 27. Springer, Berlin, 1955}.

\bibitem{KU32}
\newblock {C.~Kuratowski and S.~Ulam},
\newblock \emph{Quelques propri\'{e}t\'{e}s topologiques du produit combinatoire}.
\newblock {Fund. Math. 19 (1932), 247--251}.

\bibitem{L84}
   \newblock {J.\,D.~Lawson},
   \newblock \emph{Points of continuity for semigroup actions}.
   \newblock {Trans. Am. Math. Soc. 284 (1984), 183--202}.

\bibitem{LM08}
\newblock {P.~Lin and W.\,B.~Moors},
\newblock \emph{Rich families, $W$-spaces and the product of Baire spaces}.
\newblock {Math. Balkanica (N.S.) 22 (2008), 175--187}.

\bibitem{M47}
\newblock {E.~Marczewski},
\newblock \emph{S\'{e}parabilit\'{e} et multiplication cart\'{e}sienne des espaces topologiques}.
\newblock {Fund. Math. 34 (1947), 127--143}.

\bibitem{M75}
\newblock {R.\,A.~McCoy},
\newblock \emph{Baire spaces and hyperspaces}.
\newblock {Pac. J. Math. 58 (1975), 133--142}.

\bibitem{M72}
\newblock {E.~Michael},
\newblock \emph{A quintuple quotient quest}.
\newblock {Topol. Appl. 2 (1972), 91--138}.

\bibitem{M06}
\newblock {W.\,B.~Moors},
\newblock \emph{The product of a Baire space with a hereditarily Baire metric space is Baire}.
\newblock {Proc. Am. Math. Soc. 134 (2006), 2161--2163}.

\bibitem{N74}
\newblock {I.~Namioka},
\newblock \emph{Separate continuity and joint continuity}.
\newblock {Pac. J. Math. 51 (1974), 515--531}.

\bibitem{O57}
\newblock {J.\,C.~Oxtoby},
\newblock \emph{The Banach-Mazur game and Banach category theorem}.
\newblock  {Contributions to the Theory of Games, Vol. 3, pp. 159--163. Ann. of Math. Stud., no. 39, Princeton Univ. Press, Princeton, NJ, 1957}.

\bibitem{O60}
\newblock {J.\,C.~Oxtoby},
\newblock \emph{Cartesian products of Baire spaces}.
\newblock {Fund. Math. 49 (1960/61), 157--166}.

\bibitem{O80}
\newblock {J.\,C.~Oxtoby},
\newblock \emph{Measure and Category: A survey of the analogies between topological and measure spaces}.
\newblock {GTM, 2. Springer-Verlag, New York Berlin, 1980}.

\bibitem{PW12}
\newblock {Z.~Piotrowski and R.~Waller},
\newblock \emph{Baire and weakly Namioka spaces}.
\newblock {Topol. Appl. 159 (2012), 3294--3299}.

\bibitem{P79}
\newblock {R.~Pol},
\newblock \emph{Note on category in Cartesian products of metrizable spaces}.
\newblock {Fund. Math. 102 (1979), 55--59}.

\bibitem{P37}
\newblock {B. Posp\'{\i}\v{s}il},
\newblock \emph{Remark on bicompact spaces}.
\newblock {Ann. of Math. 38 (1937), 845--846}.

\bibitem{SR83}
\newblock {J.~Saint-Raymond},
\newblock \emph{Jeux topologiques et espaces de Namioka}.
\newblock {Proc. Am. Math. Soc. 87 (1983), 449--504}.

\bibitem{S47}
\newblock {R.~Sikorski},
\newblock \emph{On the cartesian product of metric spaces}.
\newblock {Fund. Math. 34 (1947), 288--292}.

\bibitem{T79}
\newblock {M.~Talagrand},
\newblock \emph{Deux g\'{e}n\'{e}ralisations d'un th\'{e}or\`{e}me de I. Namioka}.
\newblock {Pac. J. Math. 81 (1979), 239--251}.

\bibitem{T85}
\newblock {M.~Talagrand},
\newblock \emph{Espaces de Baire et espaces de Namioka}.
\newblock {Math. Ann. 270 (1985), 159--164}.

\bibitem{Tr79}
\newblock {J.-P. Troallic},
\newblock \emph{Espaces fonctionnels et th\'{e}or\'{e}mes de I. Namioka}.
\newblock {Bull. Soc. Math. France 107 (1979), 127--137}.

\bibitem{V70}
  \newblock {W.\,A.~Veech},
  \newblock \emph{Point-distal flows}.
  \newblock {Am. J. Math. 92 (1970), 205--242}.

\bibitem{V77} {W.\,A.~Veech},
 \newblock  \emph{Topological dynamics},
 \newblock  {Bull. Am. Math. Soc. 83 (1977), 775--830}.

\bibitem{W70}
  \newblock {A.~Wilansky},
  \newblock \emph{Topology for Analysis}.
  \newblock {Ginn, Waltham, MA, 1970}.
\end{thebibliography}
\end{document}